\documentclass[12pt]{article}
\usepackage{subcaption}
\usepackage[font=scriptsize]{caption}

\topmargin-.5in
\textheight9.2in
\oddsidemargin0in
\textwidth6.5in

\usepackage{latexsym,amsmath,amsthm,amssymb,amsfonts,amscd,multirow}
\usepackage{epsfig,verbatim,epstopdf,graphics}
\usepackage{color}
\usepackage{array}
\usepackage[ruled,linesnumbered]{algorithm2e}
\usepackage{mathtools} %

\newtheorem{Example}{Example}[section]
\newtheorem{Thm}{Theorem}[section]
\newtheorem{Lem}[Thm]{Lemma}
\newtheorem{rem}{Remark}[section]
\newtheorem{definition}{Definition}[section]

\DeclareMathOperator{\sech}{sech}

\newcommand{\ot}{\frac{1}{2}}
\newcommand{\oh}{\frac{1}{h}}
\newcommand{\toh}{\frac{2}{h}}

\newcommand{\intj}{\int_{I_j}}

\newcommand{\ub}{\bar u_h}	
\newcommand{\uhat}{\hat u_h}
\newcommand{\uh}{u_h}
\newcommand{\vh}{v_h}
\newcommand{\uxt}{\widetilde{(u_h)_x}}
\newcommand{\jp}{{j+\frac{1}{2}}}
\newcommand{\jm}{{j-\frac{1}{2}}}
\newcommand{\uxa}{\{ (u_h)_x \}}
\newcommand{\ua}{\{ u_h \}}

\newcommand{\sumj}{\sum_j}

\newcommand{\ao}{\alpha_1}
\newcommand{\at}{\alpha_2}
\newcommand{\bo}{\beta_1}
\newcommand{\bt}{\beta_2}

\newcommand{\s}{\mathcal{S}}
\newcommand{\w}{\mathcal{W}}

\newcommand{\lo}{\lambda_1}
\newcommand{\lt}{\lambda_2}
\newcommand{\inva}{A^{-1}}
\newcommand{\lambdao}{\frac{\lo^j}{1-\lo^N}}
\newcommand{\lambdat}{\frac{\lt^j}{1-\lt^N}}

\newcommand{\doj}{d_1^j}
\newcommand{\dtj}{d_2^j}

\newcommand{\sumn}{\sum_{n=-\infty}^{\infty}}

\newcommand{\pkpos}{P_{k}(1)}
\newcommand{\pkneg}{P_{k}(-1)}
\newcommand{\pkmopos}{P_{k-1}(1)}
\newcommand{\pkmoneg}{P_{k-1}(-1)}
\newcommand{\dpkpos}{P^{'}_{k}(1)}
\newcommand{\dpkneg}{P^{'}_{k}(-1)}
\newcommand{\dpkmopos}{P^{'}_{k-1}(1)}
\newcommand{\dpkmoneg}{P^{'}_{k-1}(-1)}

\newcommand{\sch}{Schr\"odinger }
\newcommand{\beq}{\begin{equation}}
\newcommand{\eeq}{\end{equation}}
\newcommand{\beqa}{\begin{eqnarray}}
\newcommand{\eeqa}{\end{eqnarray}}

\newcommand{\ppm}{P^{1}_h}
\newcommand{\pt}{P^2_h}
\newcommand{\pst}{P^\star_h}

\newcommand{\norm}[1]{\left\lVert#1\right\rVert}
\newcommand{\abs}[1]{\left | #1 \right |}
\newcommand{\la}{\Lambda}
\newcommand{\g}{\Gamma}

\DeclarePairedDelimiter{\floor}{\lfloor}{\rfloor}

\newcolumntype{H}{>{\setbox0=\hbox\bgroup}c<{\egroup}@{}}
\title
{An Ultra-Weak Discontinuous Galerkin Method for \sch Equation in One Dimension}

\author{Anqi Chen
\thanks{Department of Mathematics, Michigan State University,
East Lansing, MI 48824 U.S.A.
{\tt chenaq3@msu.edu}.}%
 \and
 Fengyan Li
\thanks{Department of Mathematical Sciences, Rensselaer Polytechnic Institute, Troy, NY 12180 U.S.A.
 {\tt  lif@rpi.edu}. Research is supported by NSF grant  DMS-1719942.}
\and
 Yingda Cheng
\thanks{Department of Mathematics, Department of  Computational Mathematics, Science and Engineering, Michigan State University,
East Lansing, MI 48824 U.S.A.
 {\tt ycheng@msu.edu}. Research is supported by NSF grants  DMS-1453661 and DMS-1720023.}
}

\date{\today}

\begin{document}
\maketitle

\begin{abstract}
In this paper, we develop an ultra-weak discontinuous Galerkin (DG) method to solve the one-dimensional nonlinear \sch equation. Stability conditions and error estimates are derived for the scheme with a general class of numerical fluxes. The error estimates are based on detailed analysis of the projection operator  associated with each individual flux choice. Depending on the parameters, we find out that in some cases, the projection can be defined  element-wise, facilitating analysis. In most cases, the projection is global, and its analysis depends on the resulting $2\times2$ block-circulant matrix structures.  For a large class of parameter choices, optimal \emph{a priori} $L^2$ error estimates can be obtained. 
Numerical examples are provided verifying theoretical results.
\end{abstract}

\textbf{Keywords.} Ultra-weak discontinuous Galerkin method,  stability, error estimates, projection, one-dimensional \sch equation.

\section{Introduction}
In this paper, we develop and analyze a discontinuous Galerkin (DG) method for  one-dimensional nonlinear \sch (NLS) equation:
\beq
\label{eqn:nls1}
iu_t+u_{xx}+f(|u|^2)u=0,
\eeq
where $f(u)$ is a  nonlinear real function and $u$ is a complex function.
The \sch equation is the fundamental equation in quantum mechanics, reaching out to many applications
in fluid dynamics, nonlinear optics and plasma physics. It is also called \sch wave equation as it can 
describe how the wave functions of a physical system evolve over time.  Many numerical methods have been applied to solve NLS equations \cite{chang1999difference,dag1999quadratic,karakashian1998space,karakashian1999space,pathria1990pseudo,sheng2001solving,taha1984analytical}. In \cite{chang1999difference,taha1984analytical}, several important finite difference schemes are implemented, analyzed and compared. In \cite{pathria1990pseudo}, the author introduced a pseudo-spectral method for general NLS equations. Many finite element methods have been tested, such as quadratic B-spline for NLS in \cite{dag1999quadratic,sheng2001solving} and space-time DG method for nonlinear (cubic) \sch equation in \cite{karakashian1998space,karakashian1999space}.  In this paper, we  focus on the DG methods,  which is a class of finite element methods using completely discontinuous piecewise function space for test functions and numerical solution, to solve the \sch equation. The first DG method was introduced by Reed and Hill in \cite{reed1973triangular}. A major development of DG methods is the Runge-Kutta DG (RKDG) framework introduced for solving hyperbolic conservation laws containing only first order spatial derivatives in a series of papers \cite{cockburn1998ldg1,cockburn1989ldg2,cockburn1989ldg3,cockburn1990ldg4,cockburn1998ldg5}. Because of the completely discontinuous basis, DG methods have several attractive properties. It can be used on many types of meshes, even those with hanging nodes. The methods have $h$-$p$ adaptivity and very high parallel efficiency.
 
Various types of DG schemes for discretizing the second order spatial derivatives have been used to compute \eqref{eqn:nls1}. One group of such methods is the so-called local DG
(LDG) method invented in \cite{cockburn1998ldg1} for convection-diffusion equations. The algorithm is based on introducing auxiliary variables and reformulating the equation into its first order form. In \cite{xu2005local}, a LDG method using alternating fluxes is developed with $L^2$ stability and proved $(k+\ot)$-th order of accuracy. Later in \cite{MR2888305}, Xu and Shu  proved optimal accuracy for both the solution and the auxiliary variables in the LDG method for high order wave equations based on refined energy estimates. 
In \cite{XingETDLDGsch}, the authors presented a LDG method with  exponential time differencing Runge-Kutta scheme and investigated the energy conservation performance of the scheme. Another group of method involves treating the second order spatial derivative directly in the weak formulations, such as IPDG method \cite{wheeler1978elliptic,douglas1976interior} and NIPG method \cite{riviere1999improved,riviere2001priori}. Those schemes enforce a penalty jump term in the weak formulation, and they have been extensively applied  to acoustic and elastic wave propagations \cite{grote2006discontinuous, ainsworth2006dispersive, riviere2003discontinuous}. %
 As for \sch equations, the direct DG (DDG) method was   applied to \sch equation in \cite{luddgsch} and achieved energy conservation and optimal accuracy. %
 Among all those various formulations, the work in this paper focus on the ultra-weak DG methods,  which can be traced backed to \cite{cessenat1998application}, and  refer to those DG methods \cite{shu2016discontinuous} that rely on repeatedly applying integration by parts  so all the spatial derivatives are shifted from the solution to the test function in the weak formulations.   In \cite{MR2373176}, Cheng and Shu developed   ultra-weak DG methods for general time dependent problems with higher order spatial derivatives. In \cite{XingKdvDG}, Bona \emph{et. al.} proposed an ultra-weak DG scheme for generalized KdV equation and performed error estimates.

The focus of this paper is the investigation of   a most general form of  the numerical flux functions that ensures stability along with our ultra-weak formulation. The fluxes under consideration include the alternating fluxes, and also the fluxes considered in  \cite{luddgsch}, and therefore allows for flexibility for the design of the schemes.  It is widely known that the choice of flux  can have significant impact on the convergence order of the scheme as evidenced in DG methods for linear first-order transport equations,   two-way wave equations  \cite{cheng20152}, and the KdV equations \cite{MR2373176, XingKdvDG} and many others. The main contribution of the work is a systematic study of error estimates based on the flux parameters.  To this end, we define and analyze   projection  operator  associated with each specific parameter choice.  We assume the dependence of parameters on the mesh size can be freely enforced, therefore many cases shall follow.  We find out that under certain conditions, the projections are ``local", meaning that they can be defined element-wise. In the most general setting, the projections are global, and detailed analysis based on block-circulant matrices are necessary. This type of analysis has been done in  \cite{XingKdvDG, MengDGerror_est} for circulant matrices  and in \cite{liu2015optimal} for block-circulant matrices, but our case is more involved due to the $2 \times 2$ block-circulant structure, for which several cases need to be distinguished based on the eigenvalues of the block matrices, and some requires  tools from Fourier analysis. Our analysis reveals that under a large class of parameter choices, our method is optimally convergent in $L^2$ norm, which is verified by extensive numerical tests for both the projection operators and the numerical schemes for \eqref{eqn:nls1}.

The remainder of this paper is organized as follows. In Section \ref{sec:scheme}, we introduce an ultra-weak DG method with general flux definitions for one-dimensional 
nonlinear \sch equations and study its stability properties. The main body of the paper, the error estimates, is contained in Section \ref{sec:error}.
We introduce a new projection operator and analyze its properties in Section \ref{sec:projection}, which is  later used in Section \ref{sec:l2estimate} to obtain the convergence results of the schemes. Numerical validations   are provided in Section \ref{sec:numerical}. Conclusions are made in Section \ref{sec:conclusion}. Some technical details, including proof of most lemmas are collected in the Appendix.

\section{A DG Method for One-Dimensional \sch Equations}
\label{sec:scheme}
In this section, we formulate and discuss stability results of a DG scheme for  one-dimensional NLS equation \eqref{eqn:nls1}
on interval $I=[a,b]$ with initial condition
$
u(x,0)=u_0(x)
$
and periodic boundary conditions. Here $f(u)$ is a given real function. Our method can be defined for general boundary conditions, but the error analysis will require slightly different tools, and therefore we only consider periodic boundary conditions in this paper.

To facilitate the discussion, first we introduce some notations and definitions.
For a  1-D interval $I=[a,b]$, the usual DG meshes are defined as:
\[
a=x_{\ot} < x_{\frac{3}{2}} < \cdots < x_{N+\ot} =b,
\]
\[
I_j=(x_{\jm},x_{\jp}), \quad x_j=\ot(x_{\jm}+x_{\jp}),
\]
and
\[
h_j=x_{\jp}-x_{\jm}, \quad h=\max_j h_j,
\]
with mesh regularity requirement $\frac{h}{\min h_j} < \sigma$, $\sigma$ is fixed during mesh refinement.

The approximation   space is defined as:
\[
V_h^k=\{ v_h: v_h|_{I_j} \in P^k(I_j),\, j=1, \, \cdots, N \},
\]
meaning $v_h$ is a polynomial of degree up to $k$ on each cell $I_j$. For a function
$v_h \in V_h^k$, we use $(v_h)^-_{\jm}$ and $(v_h)^+_{\jm}$ to refer to the value of $v_h$ at $x_{\jm}$
from the left cell $I_{j-1}$ and the right cell $I_j$ respectively. The jump and average are defined as $[v_h]=v_h^+-v_h^-$ and
 $\{v_h\}=\ot(v_h^++ v_h^-)$ at cell interfaces.

In this paper, we consider a DG scheme   motivated by \cite{MR2373176} and based on integration by parts twice, or the so-called ultra-weak formulation. In particular, we
look for the unique function $u_h=u_h(t) \in V_h^k, \, t\in (0,T]$, such that
\begin{align}
\label{eqn:scheme}
i\intj (u_h)_t \vh dx + \intj u_h (\vh)_{xx} dx - \uhat (\vh)^-_x |_\jp + \uhat (\vh)^+_x |_\jm  \nonumber \\ 
+ \uxt \vh^- |_\jp -\uxt \vh^+ |_\jm + \intj f(|u_h|^2)u_hv_h dx = 0
\end{align}
holds for all $\vh \in V_h^k$ and all $j=1,\, \cdots , N$. Here, we require $k \ge 1$, because $k=0$ yields a inconsistent scheme. Notice that \eqref{eqn:scheme} can be written equivalently in a weak formulation by performing another integration by parts back as: 
\begin{align}
\label{eqn:scheme2}
i\intj (u_h)_t \vh dx - \intj (u_h)_x (\vh)_{x} dx +(u_h^- - \uhat) (\vh)^-_x |_\jp + (\uhat-u_h^+) (\vh)^+_x |_\jm  \nonumber \\ 
+ \uxt \vh^- |_\jp -\uxt \vh^+ |_\jm + \intj f(|u_h|^2)u_hv_h dx = 0
\end{align}

The ``hat" and``tilde" terms are the numerical fluxes we pick for $u$ and $u_x$ at cell boundaries, which 
are single valued functions defined as:
\beq
\label{eqn:flux}
\uxt=\uxa+\ao [(u_h)_x] +\bo[u_h],  \quad \uhat=\ua +\at [u_h] +\bt[(u_h)_x], \quad \ao,\at \in \mathbb{C}, \, \bo,\bt \in \mathbb{C},
\eeq
where $\ao, \at, \bo, \bt$ are prescribed parameters. They may depend on the mesh parameter $h.$ Commonly used fluxes such as the central flux (by setting $\ao=\at=\bo=\bt=0$) and alternating fluxes (by setting  $\ao=-\at=\pm \frac{1}{2}, \bo=\bt=0$) belong to this flux family.
The direct DG scheme considered in \cite{luddgsch} is a special case of our method when $\ao = -\at, \bo = \frac{c}{h},\bt=0,  c>0, \ao \in \mathbb{R}$.  The IPDG method can also be casted in this framework as $\ao = \at = \bt = 0, \bo = \frac{c}{h}, c>0.$ 

Using periodic boundary condition, we can sum up on $j$ for the numerical scheme \eqref{eqn:scheme} and reduce it into the following short-hand notation
\beq
\label{eqn:ah}
 a_{\ao, \at, \bo, \bt}(u_h,v_h)-i \int_I f(|u_h|^2)u_hv_h dx=0,
\eeq
where
\begin{equation*}
 a_{\ao, \at, \bo, \bt}(u_h,v_h) =\int_I (u_h)_t \vh dx -i \int_I u_h (\vh)_{xx} dx -i\sumj (\uhat [(\vh)_x]-\uxt[\vh])|_\jp.
\end{equation*}

The following theorem contains the results on semi-discrete $L^2$ stability.

\begin{Thm}
\label{thm:stability}
\textup{(Stability)} The solution of semi-discrete DG scheme \eqref{eqn:scheme} using numerical fluxes \eqref{eqn:flux} satisfies
$L^2$ stability condition
\[
\frac{d}{dt}\int_I |u_h|^2 dx \leq0,
\]
if 
\beq
\label{eqn:par}
\textup{Im}\bt \geq 0, \, \textup{Im}\bo \leq 0,\, |\ao+\overline{\alpha_2}|^2 \le -4 \textup{Im}\bo \textup{Im}\bt.
\eeq
In particular, when all parameters $\ao, \at, \bo, \bt$ are restricted to be real, this condition amounts to
\beq
\label{eqn:par2}
\ao+\at=0
\eeq
without any requirement on $\bo, \bt$. 
\end{Thm}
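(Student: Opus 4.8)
The plan is to test the short-hand form \eqref{eqn:ah} with $\vh=\ub$, the complex conjugate of the numerical solution, which lies in $V_h^k$ because that space is closed under conjugation. With this choice the nonlinear term becomes $i\int_I f(|\uh|^2)|\uh|^2\,dx$, which is purely imaginary since $f$ and $|\uh|^2$ are real, so it disappears once we take real parts. Because $\frac{d}{dt}\int_I|\uh|^2\,dx=2\,\textup{Re}\int_I(\uh)_t\ub\,dx$, solving \eqref{eqn:ah} for $\int_I(\uh)_t\ub\,dx$ and invoking the elementary identity $2\,\textup{Re}(iz)=-2\,\textup{Im}(z)$ reduces the claim to proving
\[
\textup{Im}\Bigl(\int_I \uh\,(\ub)_{xx}\,dx+\sumj\bigl(\uhat\,[(\ub)_x]-\uxt\,[\ub]\bigr)\big|_\jp\Bigr)\ \ge\ 0
\]
whenever \eqref{eqn:par} holds.

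Next I would integrate $\intj \uh(\ub)_{xx}\,dx$ by parts once on each cell. The interior contribution is $-\intj|(\uh)_x|^2\,dx$, which is real and thus invisible to the imaginary part, while the cell-boundary terms, after reindexing the sum by periodicity, merge with the flux terms. Expressing every one-sided trace at an interface $x_\jp$ through the jump $[\cdot]$ and average $\{\cdot\}$, substituting the flux definitions \eqref{eqn:flux}, and using that conjugation commutes with jumps and averages (so $[(\ub)_x]=\overline{[(\uh)_x]}$ and $[\ub]=\overline{[\uh]}$), the whole interface sum collapses, up to the real volume term, to
\[
\sumj\Bigl(-2\,\textup{Re}\bigl([\uh]\,\overline{\{(\uh)_x\}}\bigr)+\at\,[\uh]\,\overline{[(\uh)_x]}-\ao\,[(\uh)_x]\,\overline{[\uh]}+\bt\,\bigl|[(\uh)_x]\bigr|^2-\bo\,\bigl|[\uh]\bigr|^2\Bigr)\Big|_\jp .
\]
The first summand is manifestly real and drops. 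Tracking all one-sided traces and conjugates correctly through this reduction — in particular the cancellation of the average–average terms and the precise signs — is the computation I expect to be the main obstacle.

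For the remaining terms, set $z=[\uh]\,\overline{[(\uh)_x]}$, so $[(\uh)_x]\,\overline{[\uh]}=\bar z$ and $\textup{Im}(\at z-\ao\bar z)=\textup{Im}\bigl((\at+\overline{\ao})z\bigr)\ge -|\ao+\overline{\at}|\,\bigl|[\uh]\bigr|\,\bigl|[(\uh)_x]\bigr|$ (using $|\at+\overline{\ao}|=|\ao+\overline{\at}|$), while the $\beta$-terms contribute $\textup{Im}(\bt)\bigl|[(\uh)_x]\bigr|^2-\textup{Im}(\bo)\bigl|[\uh]\bigr|^2$. Writing $X=\bigl|[(\uh)_x]\bigr|$ and $Y=\bigl|[\uh]\bigr|$, the contribution of each interface to the imaginary part is therefore at least $\textup{Im}(\bt)\,X^2-|\ao+\overline{\at}|\,XY-\textup{Im}(\bo)\,Y^2$. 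Under \eqref{eqn:par} the coefficients satisfy $\textup{Im}\bt\ge0$, $-\textup{Im}\bo\ge0$, and $|\ao+\overline{\at}|^2\le 4\,\textup{Im}\bt\,(-\textup{Im}\bo)$, which is exactly the discriminant condition forcing this quadratic form in $(X,Y)$ to be nonnegative; the degenerate cases $\textup{Im}\bt=0$ or $\textup{Im}\bo=0$ force $\ao+\overline{\at}=0$ and are checked directly. Summing over $j$ gives nonnegativity of the imaginary part, hence $\frac{d}{dt}\int_I|\uh|^2\,dx\le0$. Finally, for real parameters $\textup{Im}\bo=\textup{Im}\bt=0$, so the discriminant condition degenerates to $|\ao+\at|=0$, i.e.\ $\ao+\at=0$, with no constraint on $\bo,\bt$; this is \eqref{eqn:par2}, completing the proof.
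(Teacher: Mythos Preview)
Your proof is correct and follows essentially the same route as the paper's: test \eqref{eqn:ah} with $\vh=\ub$, integrate by parts once to expose the interior term $-\int_I|(\uh)_x|^2\,dx$ and the interface sum, expand the fluxes in jumps and averages, and take the imaginary part. The paper packages the ``take real parts'' step as adding the equation to its complex conjugate, and it stops at \eqref{eqn:stab3} without writing out the discriminant argument that you make explicit, but the computation and the structure of the argument are the same.
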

\begin{proof} %
From integration by parts, we have, for $\forall v_h \in V_h^k$
\begin{align}
a_{\ao, \at, \bo, \bt}(u_h,v_h)=
 \int_I (u_h)_t \vh dx +i\int_I (u_h)_x (\vh)_{x} dx +i\sumj ([u_h (v_h)_x]-\uhat [(\vh)_x]+\uxt[\vh])|_\jp. \notag
\end{align}

Taking $\vh=\bar u_h$ in \eqref{eqn:ah} and compute its conjugate as well, we get
\begin{align}
0&=i \int_I f(|u_h|^2)|u_h|^2 dx + \overline{i \int_I f(|u_h|^2)|u_h|^2 dx } \notag \\
&=a_{\ao, \at, \bo, \bt}(u_h,\bar u_h) + \overline{a_{\ao, \at, \bo, \bt}(u_h,\bar u_h)} \notag \\
&=\frac{d}{dt} \int_I |u_h|^2 dx - 2\textrm{Im} \sumj ([\uh(\bar u_h)_x] - \uhat [(\bar u_h)_x]+ \uxt [\ub])|_\jp. \label{eqn:stab2}
\end{align}

Define
\begin{align*}
A(\uh,\bar u_h) = &\sumj ([\uh(\bar u_h)_x] - \uhat [(\bar u_h)_x]+ \uxt [\ub])|_\jp \\
	          = &\sumj (\{\uh\}[(\bar u_h)_x]+[\uh]\{(\bar u_h)_x\}-\{\uh\}[(\bar u_h)_x]-\at[\uh][(\bar u_h)_x]-\bt [(u_h)_x][(\bar u_h)_x]\\
	          &+\{{(u_h)_x}\}[\bar u_h] +\ao[(u_h)_x][\bar u_h]+\bo[\uh][\bar u_h])|_\jp \\
		 = & \sumj \big(2\textrm{Re}([\uh]\{(\bar u_h)_x\})-\bt|[(u_h)_x]|^2+\bo|[\uh]|^2+\ao[(u_h)_x][\bar u_h]-\at[\uh][(\bar u_h)_x] \big)|_\jp.
\end{align*}
	Therefore, Im$A(\uh,\bar u_h)=\sumj (-\textrm{Im}\bt|[(u_h)_x]|^2+\textrm{Im}\bo|[\uh]|^2+\textrm{Im}\{(\ao+ \overline{\at})[\bar u_h][(u_h)_x]\})|_\jp$. Plug it back into \eqref{eqn:stab2}: 

\beq
\label{eqn:stab3}
\frac{d}{dt} \int_I |u_h|^2 dx+
\sumj 2\textrm{Im}\bt|[(u_h)_x]|^2-2\textrm{Im}\bo|[\uh]|^2-2\textrm{Im}\{(\ao+  \overline{\at})[\bar u_h][(u_h)_x]\}|_\jp=0.
\eeq

If the stability condition \eqref{eqn:par} is satisfied, we have
$$
\frac{d}{dt} \int_I |u_h|^2 dx \leq   0.
$$

If   all parameters are real and \eqref{eqn:par2} is satisfied, then \eqref{eqn:stab3} further yields:
$$
\frac{d}{dt} \int_I |u_h|^2 dx = 0,
$$
which implies energy conservation.
\end{proof}

\begin{rem}
For simplicity of the discussion, in the next section, we will only consider  real parameters, i.e. when $\ao, \at, \bo, \bt$ are real and $\ao+\at=0$. 
\end{rem}

\section{Error Estimates}
\label{sec:error}

In this  section, we will derive error estimates of the DG scheme \eqref{eqn:scheme} for the model NLS equation \eqref{eqn:nls1}.
 As mentioned before, we consider $L^2$ stable real parameter choices, which means the numerical fluxes are defined by three parameters as,
\beq
\label{eqn:flux2}
\uxt=\uxa+\ao [(u_h)_x] +\bo[u_h],  \quad \uhat=\ua -\ao [u_h] +\bt[(u_h)_x], \quad \ao, \bo,\bt \in \mathbb{R}.
\eeq

We will focus on the impact of the choice of the parameters $\ao, \bo, \bt$ on the accuracy of the scheme.  
We proceed as follows: first, we   define and discuss the properties of projection operator $\pst$ in Section \ref{sec:projection}. Then, we use the projection error estimates to obtain convergence result for DG scheme in Section \ref{sec:l2estimate}.

\subsection{Projection Operator}
\label{sec:projection}
In this subsection, we perform detailed studies of a projection operator defined as follows.

\begin{definition}
For our DG scheme with flux choice \eqref{eqn:flux2}, we define the associated projection operator $\pst$ for any periodic function $u \in W^{1,\infty}(I) $  
to be the unique polynomial $\pst u \in V_h^k$ (when $k \geq 1$) satisfying  
\begin{subequations}
\label{eqn:pst}
\begin{align} \label{eqn:psts1}
\intj \pst u  \, \vh dx &=\intj u  \,\vh dx \quad & &\forall \vh \in P^{k-2}(I_j),\\
\label{eqn:psts2}
\widehat{\pst u}  =\{\pst u\} -\ao [\pst u] +\bt[(\pst u)_x] &= u  & &\textrm{at} \quad x_{\jp}, \\
\label{eqn:psts3}
\widetilde{\pst u_x}=\{(\pst u)_x\}+\ao [(\pst u)_x] +\bo[\pst u] & =u_x & &\textrm{at} \quad x_{\jp},
\end{align}
\end{subequations}
for all $j$. When
$k=1$,  only conditions \eqref{eqn:psts2}-\eqref{eqn:psts3}  are needed.
\end{definition}

 This definition is to ensure $\widehat{u - \pst u} =0$ and $\widetilde{u_x - \pst u_x} = 0$, which will be used in error estimates for the scheme.  In the following, we  analyze the projection when the parameter choice reduces it to a local projection in Section \ref{sec:localp}, and then we consider the more general global projection in Section \ref{sec:globalp}.

\subsubsection{Local projection results}
\label{sec:localp}
In general, the projection $\pst$ is globally defined, and its existence, uniqueness and approximation properties are quite complicated mathematically. However, with some special parameter choices, $\pst$ can be reduced to a local projection, meaning that it can be solved element-wise, and hence the analysis can be greatly simplified. 

 For example, with the alternating fluxes $\ao=\pm \frac{1}{2}, \bo=\bt=0,$ $\pst$ can be reduced to $\ppm$ and $\pt$ defined below.
$\pst=\ppm$ for parameter choice $\ao= \frac{1}{2}, \bo=\bt=0$ is formulated as:  for each cell $I_j$, we find the unique polynomial of degree $k$,  $\ppm u$, satisfying

\begin{subequations}
\label{eqn:ppm}
\begin{align} 
\label{eqn:ppm1} \intj \ppm u \, \vh dx &=\intj u \,\vh dx \quad & &\forall \vh \in P^{k-2}(I_j), \\
\label{eqn:ppm2} (\ppm u)^- & = u  & &\textrm{at} \, x_{\jp}, \\
\label{eqn:ppm3} (\ppm u)_x^+& = u_x & &\textrm{at} \, x_{\jm}.
\end{align}
\end{subequations}
When $k=1$, only conditions \eqref{eqn:ppm2}-\eqref{eqn:ppm3}  are needed.

Similarly, we can define $\pst=\pt$  for parameter choice $\ao= -\frac{1}{2}, \bo=\bt=0$ as:  for each cell $I_j$, we find the unique polynomial of degree $k$,  $\pt u$, satisfying
\begin{subequations}
\label{eqn:pt}
\begin{align}
\label{eqn:pt1} \intj \pt u \, \vh dx &=\intj u \,\vh dx \quad & &\forall \vh \in P^{k-2}(I_j), \\
\label{eqn:pt2} (\pt u)^+ & = u  & &\textrm{at} \, x_{\jm}, \\
\label{eqn:pt3} (\pt u)_x^-& = u_x & &\textrm{at} \, x_{\jp}.
\end{align}
\end{subequations}
When $k=1$, only conditions \eqref{eqn:pt2}-\eqref{eqn:pt3}  are needed.

Similar local projections have been introduced and considered in \cite{MR2373176}. It is obvious that $\ppm u, \pt u$  can be solved element-wise, and their existence, uniqueness are straightforward.  From a standard scaling argument by Bramble-Hilbert lemma in \cite{ciarletFEM},  $\ppm$ and $\pt$ have the following error estimates: let $u\in W^{k+1,p}(I_j) (p=2,\infty)$, then
\beq
\label{eqn:ppmerr}
\begin{aligned}
\|u-P^{\nu}_h u\|_{L^p(I_j)} \leq Ch^{k+1}_j |u|_{W^{k+1,p}(I_j)}, \quad p=2,\infty, \ \nu=1,2, \\
\|u_x-P^{\nu}_h u_x\|_{L^p(I_j)} \leq Ch^{k}_j |u|_{W^{k+1,p}(I_j)}, \quad p=2,\infty, \ \nu=1,2,
\end{aligned}
\eeq
where here and below, $C$ is a generic constant that is independent of the mesh size $h_j$, the parameters $\ao, \bo, \bt$ and the function $u$, but may take different value in each occurrence.

Naturally, the next question is that if there are other parameter choices such that $\pst$ can be reduced to a local projection. The following lemma addresses this issue.

\begin{Lem}[The condition for reduction to a local projection]
 If $\ao^2+\bo\bt=\frac{1}{4}$,  $\pst$ is a local
projection.
\end{Lem}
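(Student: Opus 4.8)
The plan is to show that under the condition $\ao^2 + \bo\bt = \frac14$, the global coupling in the defining conditions \eqref{eqn:psts2}--\eqref{eqn:psts3} actually decouples, so that on each cell $I_j$ the two interface conditions at $x_\jm$ and the two at $x_\jp$ can be solved independently using only the polynomial restricted to $I_j$. Concretely, I would work on a single element $I_j$ and treat \eqref{eqn:psts2}--\eqref{eqn:psts3} as a linear system. Write $p = \pst u$ restricted to $I_j$. The four interface equations involving $I_j$ are: \eqref{eqn:psts2} and \eqref{eqn:psts3} at $x_\jp$, which couple $p$ with the trace of the neighbor $\pst u|_{I_{j+1}}$ from the right, and \eqref{eqn:psts2}, \eqref{eqn:psts3} at $x_\jm$, which couple $p$ with the neighbor from the left. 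The key observation is that the $2\times 2$ matrix governing how a cell's own one-sided traces $(p^+, p_x^+)$ at its left endpoint (or $(p^-, p_x^-)$ at its right endpoint) enter these equations is, up to sign,
\[
M = \begin{pmatrix} -\ao & \bt \\ \bo & \ao \end{pmatrix},
\]
coming from $\{p\} - \ao[p] + \bt[p_x]$ and $\{p_x\} + \ao[p_x] + \bo[p]$. One computes $\det M = -\ao^2 - \bo\bt = -\frac14 \neq 0$ precisely under the hypothesis.

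The main steps, in order, are as follows. First, expand the four interface conditions for cell $I_j$ explicitly in terms of the one-sided traces $p^\pm_\jm, p^\pm_\jp, (p_x)^\pm_\jm, (p_x)^\pm_\jp$, and observe that the equations at $x_\jp$ can be rearranged to express the neighbor's left traces $(\pst u|_{I_{j+1}})^+_\jp$ and $((\pst u|_{I_{j+1}})_x)^+_\jp$ purely in terms of $u(x_\jp)$, $u_x(x_\jp)$, and the right traces $p^-_\jp, (p_x)^-_\jp$ of $p$ — this is where invertibility of $M$ (equivalently, the condition $\ao^2+\bo\bt=\frac14$, together with $\bt \ne 0$ or the degenerate branch $\bt=0$) is used. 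Second, argue that, reading these relations cell by cell, the system becomes block-triangular rather than block-circulant: the coefficients $4/ (1 - \lambda^N)$-type global factors collapse because the relevant $2\times2$ propagation matrix is actually nilpotent or has the right spectral structure. More directly, I would instead take the cleaner route: show that under $\ao^2 + \bo\bt = \frac14$ one can choose, for each cell, a well-posed local problem — impose \eqref{eqn:psts1} together with a suitable pair of boundary conditions at the two endpoints of $I_j$ obtained by a fixed invertible recombination of \eqref{eqn:psts2}--\eqref{eqn:psts3} — and then verify that a function assembled from these local solves satisfies all the global conditions \eqref{eqn:psts2}--\eqref{eqn:psts3}. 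Third, check the counting: on $P^k(I_j)$ there are $k+1$ degrees of freedom, \eqref{eqn:psts1} supplies $k-1$ constraints, and we need exactly two more; the recombined interface conditions supply exactly two per cell when the decoupling works, so existence and uniqueness follow from a dimension argument plus injectivity.

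The step I expect to be the main obstacle is pinning down \emph{which} two local boundary conditions to impose on each cell so that the reconstruction is both well-posed locally and globally consistent — i.e., identifying the correct invertible recombination of the pair $(\{\cdot\} - \ao[\cdot] + \bt[\cdot]_x,\ \{\cdot\}_x + \ao[\cdot]_x + \bo[\cdot])$ that turns the coupled two-sided relations into one condition depending only on the left cell and one depending only on the right cell. I anticipate this amounts to diagonalizing or triangularizing the matrix $M$ above (using $\det M = -\frac14$), so that in the new coordinates the ``characteristics'' of the projection propagate in a single direction on each side, exactly as happens for the alternating-flux cases $\ao = \pm\frac12$, $\bo=\bt=0$ (where $M$ is already triangular). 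Once that normal form is in hand, the local solvability follows from the same Bramble--Hilbert / scaling argument used for $\ppm$ and $\pt$, and the approximation estimates \eqref{eqn:ppmerr} carry over verbatim. I would also separately dispose of the degenerate sub-case $\bt = 0$ (forcing $\ao = \pm\frac12$ and $\bo$ arbitrary), and symmetrically $\bo = 0$, since there the $2\times2$ system is triangular and the decoupling is immediate.
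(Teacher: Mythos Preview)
Your overall strategy---recombine the two interface conditions so that each yields a condition depending on only one neighboring cell---is exactly what the paper does. But the mechanism you identify is backwards. You single out the jump-coefficient matrix
\[
M = \begin{pmatrix} -\ao & \bt \\ \bo & \ao \end{pmatrix}
\]
and emphasize that $\det M = -\tfrac14 \neq 0$, proposing to use its \emph{invertibility}. That is not what drives the decoupling. The actual one-sided trace matrices are $\tfrac12 I \pm M$: the contribution of the right-cell traces $(p^+, p_x^+)$ at $x_\jp$ is governed by $\tfrac12 I + M$, and of the left-cell traces $(p^-, p_x^-)$ by $\tfrac12 I - M$. Under the hypothesis $\ao^2 + \bo\bt = \tfrac14$ one has $\det(\tfrac12 I + M) = \tfrac14 - \ao^2 - \bo\bt = 0$, and since $\mathrm{tr}\,M = 0$ also $\det(\tfrac12 I - M) = 0$. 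Thus each one-sided matrix has \emph{rank one}, and a suitable linear combination of \eqref{eqn:psts2}--\eqref{eqn:psts3} annihilates the right-cell contribution at $x_\jp$, leaving a condition on $I_j$ alone; the complementary combination kills the left-cell contribution. Your later remark about diagonalizing $M$ so that ``characteristics propagate in a single direction'' is in fact the correct picture (the eigenvalues of $M$ are $\pm\tfrac12$), but you present it as a speculative afterthought rather than the crux.

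The paper simply writes the explicit combinations down. For $\bo \ne 0$ it adds $\tfrac{\frac12+\ao}{\bo}$ times \eqref{eqn:psts3} to \eqref{eqn:psts2} to eliminate the ``$-$'' side, and subtracts $\tfrac{\frac12-\ao}{\bo}$ times \eqref{eqn:psts3} to eliminate the ``$+$'' side; the case $\bt \ne 0$ is symmetric, and $\bo=\bt=0$ forces $\ao = \pm\tfrac12$, recovering $\ppm$ or $\pt$. One further caution: your final claim that ``the approximation estimates \eqref{eqn:ppmerr} carry over verbatim'' is false in general---the decoupled boundary conditions involve $h$-dependent coefficients that can break the standard scaling argument, and the paper's Lemma~\ref{lem:localp} shows the resulting local projection may be suboptimal for some parameter choices.
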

\begin{proof}

We can write   \eqref{eqn:psts2}-\eqref{eqn:psts3} as
\begin{align}
u = & (\ot+\ao)(\pst u)^- -\bt (\pst u)_x^- + (\ot-\ao)(\pst u)^+ +\bt (\pst u)_x^+  & &\textrm{at}  \, x_{\jp}, \, \forall j,
\label{eqn:ucomp}\\
u_x = & -\bo (\pst u)^-+(\ot-\ao)(\pst u)_x^- +\bo (\pst u)^+ +(\ot+\ao)(\pst u)_x^+ & &\textrm{at}  \, x_{\jp}, \, \forall j.
\label{eqn:uxcomp}
\end{align}

By simple algebra, if $\ao^2+\bo\bt=\frac{1}{4}$, we obtain:
\begin{itemize}
\item if $\bo \neq 0$, then at $x_{\jp}$ for all $j,$ we have
\begin{equation}
\label{eqn:cancel1}
\begin{aligned}
u+\frac{\ot+\ao}{\bo}u_x & = (\pst u)^+ +(\bt+\frac{(\ot+\ao)^2}{\bo})(\pst u)_x^+ = (\pst u)^+ +\frac{\ot+\ao}{\bo}(\pst u)_x^+ ,  \\
u-\frac{\ot-\ao}{\bo}u_x & = (\pst u)^- -(\bt+\frac{(\ot-\ao)^2}{\bo})(\pst u)_x^- = (\pst u)^- -\frac{\ot-\ao}{\bo}(\pst u)_x^-,
\end{aligned}
\end{equation}
meaning that  $\pst$ can be defined element-wise on cell $I_j$ as:
\beq
\label{eqn:pst2}
\begin{aligned}
\intj \pst u \, \vh dx &=\intj u \,\vh dx \quad & &\forall \vh \in P^{k-2}(I_j), \\
(\pst u)^+ +\frac{\ot+\ao}{\bo}(\pst u)_x^+ &=u+\frac{\ot+\ao}{\bo}u_x  & &\textrm{at} \, x_{\jm}, \\
(\pst u)^- -\frac{\ot-\ao}{\bo}(\pst u)_x^- & = u-\frac{\ot-\ao}{\bo}u_x  & &\textrm{at} \, x_{\jp}.
\end{aligned}
\eeq

\item if $\bt \neq 0$, then at $x_{\jp}$ for all $j,$ we have
\begin{equation}
\label{eqn:cancel2}
\begin{aligned}
u_x+\frac{\ot-\ao}{\bt}u & = (\pst u)_x^+ +(\bo+\frac{(\ot-\ao)^2}{\bt})(\pst u)^+ = (\pst u)_x^+ + \frac{\ot-\ao}{\bt}(\pst u)^+, \\
u_x-\frac{\ot+\ao}{\bt}u & = (\pst u)_x^- -(\bo+\frac{(\ot+\ao)^2}{\bt})(\pst u)^- = (\pst u)_x^- -\frac{\ot+\ao}{\bt}(\pst u)^-,
\end{aligned}
\end{equation}

meaning that  $\pst$ can be defined element-wise on cell $I_j$ as:
\beq
\label{eqn:pst3}
\begin{aligned}
\intj \pst u \, \vh dx &=\intj u \,\vh dx \quad & &\forall \vh \in P^{k-2}(I_j), \\
 (\pst u)_x^+ + \frac{\ot-\ao}{\bt}(\pst u)^+ &=u_x+\frac{\ot-\ao}{\bt}u  & &\textrm{at} \, x_{\jm}, \\
(\pst u)_x^- -\frac{\ot+\ao}{\bt}(\pst u)^-& = u_x-\frac{\ot+\ao}{\bt}u  & &\textrm{at} \, x_{\jp}.
\end{aligned}
\eeq

\item if $\bo=\bt=0$, then $\ao=\pm \frac{1}{2}, $ and $\pst=\ppm$ or $\pt$, which are local projections.
\end{itemize}

\end{proof}

This lemma implies that for any parameter satisfying $\ao^2+\bo\bt=\frac{1}{4}$, $\pst$ is locally defined. We remark that this condition turns out to be the same
 as the optimally convergent numerical flux families  in \cite{cheng20152} for two-way wave equations, although they arise in different contexts. Unfortunately, for the general definition of $\pst$, unlike $\ppm$ and $\pt$, we cannot directly use the Bramble-Hilbert lemma and the standard scaling  argument to obtain optimal approximation property, since the second and third relations in \eqref{eqn:pst2} and \eqref{eqn:pst3} may break the scaling.  The next lemma performs a detailed analysis of this local projection when $\bo \neq 0$ or $\bt \neq 0$. Indeed for some parameter choices, only suboptimal convergence rate is obtained.

\begin{Lem}[Local projection: existence, uniqueness and error estimates]
\label{lem:localp}
 If $\ao^2+\bo\bt=\frac{1}{4}$ with $\bo \neq 0$ or $\bt \neq 0$, the   local projection $\pst$ exists and is uniquely defined when 
 \begin{equation}
 \label{eq:localsolvable}
\Gamma_j =  \bo -\frac{k^2}{h_j} + \bt \frac{k^2(k^2-1)}{h_j^2} \ne 0, \, \forall j.
 \end{equation}
  In addition, the following error estimates hold for $p=2, \infty$: %

\begin{eqnarray}
\label{eqn:localpest}
  \|\pst u-u\|_{L^p(I)} \leq  C h^{k+1} |u|_{W^{k+1,\infty}(I)} \left ( 1+ \frac{\max \left( | \bo|,  \min \left (   \frac{|\ot-\ao|}{ h}   ,   \frac{|\ot+\ao|}{  h}   \right ),    \frac{|\bt|}{  h^2}   \right ) }{\min_j |\Gamma_j|} \right ).
\end{eqnarray}

\end{Lem}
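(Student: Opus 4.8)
The plan is to reduce the (now element-wise) projection on each cell to a $2\times2$ linear system for the two top Legendre coefficients. Fix a cell $I_j$, map it affinely onto the reference interval, and expand $\pst u|_{I_j}=\sum_{m=0}^{k}c_m\widehat P_m$ in the transplanted Legendre basis $\{\widehat P_m\}$. Since Legendre polynomials of distinct degree are $L^2(I_j)$-orthogonal and $P^{k-2}(I_j)=\mathrm{span}\{\widehat P_0,\dots,\widehat P_{k-2}\}$, the moment condition \eqref{eqn:psts1} (the first line of \eqref{eqn:pst2}) forces $c_0,\dots,c_{k-2}$ to coincide with the corresponding Legendre coefficients of $u$, so only $c_{k-1},c_k$ remain to be fixed by the two interface conditions; when $k=1$ there are no moment conditions and $c_0,c_1$ are the unknowns. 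I treat the case $\bo\neq0$ using \eqref{eqn:pst2}; when $\bo=0$ necessarily $\bt\neq0$, and the argument is identical starting from \eqref{eqn:pst3} with the roles of $u$ and $u_x$ interchanged.

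For existence and uniqueness I substitute $\widehat P_m(\pm1)=(\pm1)^m$, the endpoint values of $\widehat P_m'$, and the Jacobian $\frac{d}{dx}=\frac{2}{h_j}\frac{d}{d\xi}$ into the two interface relations of \eqref{eqn:pst2}, turning them into a system $M_j\,(c_{k-1},c_k)^\top=r_j$. Expanding $\det M_j$ directly --- using that the two interface coefficients $\mu=\frac{\ot+\ao}{\bo}$, $\nu=\frac{\ot-\ao}{\bo}$ satisfy $\mu+\nu=\frac{1}{\bo}$ and $\mu\nu=\frac{\bt}{\bo}$ (the latter being exactly where $\ao^2+\bo\bt=\frac14$ enters), together with $\frac{(k-1)k}{h_j}+\frac{k(k+1)}{h_j}=\frac{2k^2}{h_j}$ and $\frac{(k-1)k}{h_j}\cdot\frac{k(k+1)}{h_j}=\frac{k^2(k^2-1)}{h_j^2}$ --- yields $\det M_j=(-1)^{k-1}\frac{2}{\bo}\Gamma_j$. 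Hence $M_j$ is invertible iff $\Gamma_j\neq0$, which proves the first assertion and also supplies the key identity for the error bound.

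For the error estimate I compare $\pst u$ with a standard interpolant: write $u-\pst u=\eta+\zeta$ with $\eta=u-\ppm u$ and $\zeta=\ppm u-\pst u\in V_h^k$; $\eta$ is controlled by \eqref{eqn:ppmerr}. Because $\ppm$ and $\pst$ reproduce the same low moments of $u$, $\zeta$ satisfies those moment conditions, so $\zeta|_{I_j}\in\mathrm{span}\{\widehat P_{k-1},\widehat P_k\}$ and its coefficients solve $M_j\,(c_{k-1},c_k)^\top=-r_j^{\eta}$, where $r_j^{\eta}$ is assembled from the interface values $\eta^\pm$, $\eta_x^\pm$. The defining interpolation properties of $\ppm$, namely $(\ppm u)_x^+=u_x$ at $x_{\jm}$ and $(\ppm u)^-=u$ at $x_{\jp}$, make $\eta_x^+|_{\jm}=0$ and $\eta^-|_{\jp}=0$, so $r_j^{\eta}$ only involves $|\eta^+|_{\jm}|\le Ch_j^{k+1}|u|_{W^{k+1,\infty}(I_j)}$ and $|\eta_x^-|_{\jp}|\le Ch_j^{k}|u|_{W^{k+1,\infty}(I_j)}$. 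Applying Cramer's rule with $\det M_j=(-1)^{k-1}\frac{2}{\bo}\Gamma_j$, bounding the entries of $M_j$ by $1+\max(|\mu|,|\nu|)\frac{k(k+1)}{h_j}$, and simplifying via $|\bo\nu|=|\ot-\ao|$ and $|\bo\mu\nu|=|\bt|$, I obtain $|c_{k-1}|+|c_k|\le\frac{Ch_j^{k+1}}{|\Gamma_j|}|u|_{W^{k+1,\infty}(I_j)}\bigl(|\bo|+\frac{|\ot-\ao|}{h_j}+\frac{|\bt|}{h_j^2}\bigr)$. Since $|\widehat P_m|\le1$ this bounds $\|\zeta\|_{L^\infty(I_j)}$; adding the estimate for $\eta$, maximizing over $j$, and using mesh regularity ($h/\sigma\le h_j\le h$) to replace the $h_j$'s in the bracket by $h$ and $|\Gamma_j|$ by $\min_j|\Gamma_j|$, we reach \eqref{eqn:localpest} for $p=\infty$ but with $|\ot-\ao|$ in place of $\min(|\ot-\ao|,|\ot+\ao|)$. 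Repeating the argument with $\pt$ instead of $\ppm$ gives the same bound with $|\ot+\ao|$; since $\|u-\pst u\|$ is dominated by both, it is dominated by the minimum, and the elementary identity $\min(\max(X,A,Z),\max(X,B,Z))=\max(X,\min(A,B),Z)$ turns this into exactly \eqref{eqn:localpest}. The case $p=2$ then follows from $\|v\|_{L^2(I)}\le|I|^{1/2}\|v\|_{L^\infty(I)}$.

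The main obstacle I expect is the bookkeeping in the last paragraph: evaluating $\widehat P_m,\widehat P_m'$ at the endpoints with the correct scaling, assembling $M_j$ and $r_j^{\eta}$, and --- most delicately --- using $\ao^2+\bo\bt=\frac14$ to collapse the Cramer-rule bound into precisely the shape $\frac{1}{\min_j|\Gamma_j|}\max\bigl(|\bo|,\min(\frac{|\ot-\ao|}{h},\frac{|\ot+\ao|}{h}),\frac{|\bt|}{h^2}\bigr)$, including the $\ppm$-versus-$\pt$ device that produces the inner minimum. Establishing the determinant identity $\det M_j=(-1)^{k-1}\frac{2}{\bo}\Gamma_j$ first is what makes both halves of the lemma drop out.
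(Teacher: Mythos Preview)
Your proposal is correct and follows essentially the same approach as the paper's proof: both set up the $2\times2$ linear system for the top two Legendre coefficients, compute $\det M_j=(-1)^{k-1}\tfrac{2}{\bo}\Gamma_j$, compare $\pst u$ with $\ppm u$ to feed Cramer's rule with the interpolation residuals $\eta^+|_{\jm}$ and $\eta_x^-|_{\jp}$, and then assemble cell-wise $L^\infty$ bounds. The paper organizes the Cramer-rule step into several subcases ($k=1$; $k>1,\bt=0,\ao=\pm\tfrac12$; $k>1,\bt\ne0$; $\bo=0$), whereas you treat them uniformly; and you make the $\ppm$-versus-$\pt$ device producing the inner $\min$ fully explicit, while the paper only carries it out in the $\bt=0$ subcase and leaves the symmetric argument to the final ``summarizing all the estimates''.
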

\begin{proof}
The proof of this lemma can be found in the Appendix \ref{sec:a01}.
\end{proof}

If we assume $\bo=c/h, \bt=ch$, then $\ao= constant$, and   as long as the solvability condition \eqref{eq:localsolvable} is satisfied,  we have the optimal approximation property for $\pst$. Such conclusions are not surprising, because  \eqref{eqn:pst2} and \eqref{eqn:pst3} will maintain the correct scaling relation.  However, for general parameter choices,  the convergence rate may be suboptimal. This  is verified by numerical experiment in  Table \ref{tab:localp2}.  

\subsubsection{Global projection results}
\label{sec:globalp}

 In this subsection, we consider $\ao^2+\bo\bt \ne \frac{1}{4},$ where $\pst$ is a global projection. For simplicity, only uniform mesh is investigated, which makes the coefficient matrix of the   linear system   block-circulant. First, we analyze the existence and uniqueness of $\pst.$

\begin{Lem}[Global projection: existence and uniqueness]
\label{lem:globalp}
  If $\ao^2+\bo\bt \ne \frac{1}{4}$, assuming a uniform mesh of size $h$, let $\Gamma:=\bo +\frac{k^2(k^2-1)}{h^2}\bt -\frac{2k^2}{h}(\ao^2+\bo \bt+\frac{1}{4})$ and $\Lambda:=\frac{-2k}{h}(\ao^2+\bo\bt-\frac{1}{4})$, then we have

\underline{Case 1.} if $|\Gamma|>|\Lambda|$, then $\pst$  exists and is uniquely defined.

\underline{Case 2.} if $|\Gamma|=|\Lambda|,$  then $\pst$  exists and is uniquely defined if $N$ is odd, and furthermore, if $k$ is odd, we require  $\Gamma=-\Lambda;$ if $k$ is even, we require $\Gamma=\Lambda.$

\underline{Case 3.} if $|\Gamma|<|\Lambda|,$  then $\pst$  exists and is uniquely defined if
$$
(-1)^{(k+1)N}\left(\frac{\Gamma}{\Lambda}+\sqrt{\left (\frac{\Gamma}{\Lambda} \right)^2-1}\right)^N \ne 1.
$$

\end{Lem}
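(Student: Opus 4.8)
The plan is to reduce the existence and uniqueness of $\pst$ to the invertibility of the $2\times 2$ block-circulant coefficient matrix arising from conditions \eqref{eqn:psts1}--\eqref{eqn:psts3} on a uniform mesh. First I would set up the linear system: on each cell $I_j$, conditions \eqref{eqn:psts1} fix the first $k-1$ moments of $\pst u$ against those of $u$, leaving two degrees of freedom per cell, which I would parametrize conveniently (e.g. by the boundary values $(\pst u)^{\pm}_{\jp}$ or by a Legendre/Radau-type expansion). Substituting into the interface conditions \eqref{eqn:psts2}--\eqref{eqn:psts3} and using the values at $\pm 1$ of the relevant Legendre polynomials and their derivatives produces, after eliminating the interior moments, a system of the form $(D + L S)\mathbf{x} = \mathbf{b}$ where $S$ is the $N\times N$ shift matrix (from the periodic coupling between $I_j$ and $I_{j+1}$) and $D$, $L$ are fixed $2\times 2$ blocks. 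The key algebraic fact driving the statement is that the relevant scalar quantities reduce to $\Gamma$ and $\Lambda$; I expect $\Gamma$ is the ``diagonal'' contribution and $\Lambda$ the ``off-diagonal/coupling'' contribution, so that after diagonalizing the circulant part (Fourier modes $\omega^m$, $\omega = e^{2\pi i/N}$, $m=0,\dots,N-1$) the solvability of each Fourier block is governed by a $2\times 2$ determinant that is essentially $\Gamma^2 - \Lambda^2 \omega^{\pm k m}$ or similar.

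The core of the proof is therefore: $\pst$ exists and is unique if and only if none of these $N$ Fourier-block determinants vanishes. I would compute this determinant explicitly as a function of $\omega^m$ — it should factor, up to nonzero constants, as $(\Gamma - \Lambda z)(\Gamma + \Lambda z^{-1})$ or $\Gamma^2 - \Lambda^2 z^{\pm (k+1)}$-type expression, where $z$ ranges over $N$-th roots of unity (the exact power of $z$, and the parity dependence on $k$, coming from $P_k(-1) = (-1)^k P_k(1)$ and the analogous sign for $P'_k$). Then the three cases are a direct analysis of when this can be zero:
\begin{itemize}
\item[] If $|\Gamma| > |\Lambda|$, then $|\Gamma| > |\Lambda z^{\pm}|$ for all roots of unity $z$, so the determinant never vanishes — Case 1.
\item[] If $|\Gamma| = |\Lambda|$, vanishing requires $z^{\pm(k+1)} = \pm 1$ for an appropriate sign; with $N$ odd the only way a relevant root of unity equals $-1$ is excluded, and the surviving requirement $z^{k+1} = 1$ forces the sign condition $\Gamma = \mp\Lambda$ depending on the parity of $k$ (equivalently $(k+1)$) — Case 2.
\item[] If $|\Gamma| < |\Lambda|$, the determinant vanishes for some Fourier mode iff the product over all $N$ roots of unity of the linear factor is $1$; evaluating this product (a resultant computation) gives $(-1)^{(k+1)N}\big(\tfrac{\Gamma}{\Lambda} + \sqrt{(\Gamma/\Lambda)^2 - 1}\big)^N$, and the stated inequality is exactly the negation of this — Case 3.
\end{itemize}

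For the boundary-data formulas I would cite the values $\pkpos$, $\pkneg$, $\dpkpos$, etc., and the standard identity $P_k(-1) = (-1)^k P_k(1)$, together with $\sum_{m} \omega^{jm} = N\delta_{j\equiv 0}$ to invert the circulant part; the quantities $k^2$ and $k^2(k^2-1)$ appearing in $\Gamma$ will come from $P'_k(1)$ and a second-derivative-type combination evaluated at the endpoints. The main obstacle I anticipate is the bookkeeping in Case 3: correctly identifying which square-root branch appears and tracking the sign $(-1)^{(k+1)N}$ through the passage from ``some eigenvalue is zero'' to ``the product equals $1$'' — this requires writing the characteristic polynomial of the $2\times 2$ amplification matrix $\Gamma^{-1}(\text{something})$ and recognizing that its two roots are $\lambda_{1,2} = \tfrac{\Gamma}{\Lambda} \pm \sqrt{(\Gamma/\Lambda)^2 - 1}$ (reciprocal pair, product $1$), so that the singularity condition over the periodic lattice becomes $\lambda_1^N = 1$ up to the collected sign. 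Cases 1 and 2 are then comparatively routine. Throughout, I would handle $k=1$ separately only to note that \eqref{eqn:psts1} is vacuous, so the system is purely the interface conditions and the same determinant analysis applies verbatim.
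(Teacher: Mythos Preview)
Your overall strategy is right and essentially matches the paper's: reduce to invertibility of the $2N\times 2N$ block-circulant matrix coming from the two free Legendre coefficients per cell, and then analyze it via the eigenvalues of a single $2\times 2$ matrix. The paper carries this out by writing $M=\mathrm{circ}(A,B,0_2,\dots,0_2)$ with $\det A=\det B=\Lambda\neq 0$, setting $Q=-A^{-1}B$, and using $\det M=(\det A)^N\det(I_2-Q^N)$ directly rather than passing through the $N$ Fourier blocks. The eigenvalues of $Q$ are computed to be
\[
\lambda_{1,2}=\frac{(-1)^{k+1}}{\Lambda}\Big(\Gamma\pm\sqrt{\Gamma^2-\Lambda^2}\Big),\qquad \lambda_1\lambda_2=\det Q=1,
\]
so invertibility is exactly $\lambda_1^N\neq 1$ and $\lambda_2^N\neq 1$. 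Your Fourier-block route is equivalent (indeed $\prod_m\det(I_2-\omega^mQ)=\det(I_2-Q^N)$), and either way the three cases are just the trichotomy on $|\lambda_{1,2}|$.

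Where your proposal goes off is in the guessed form of the block determinant and the resulting case analysis. The determinant of the $m$-th Fourier block is, up to a nonzero factor, $(1-\omega^m\lambda_1)(1-\omega^m\lambda_2)$; it is quadratic in $\omega^m$, and the dependence on $k$ enters only through the \emph{sign} $(-1)^{k+1}$ in $\lambda_{1,2}$, not as a power $z^{\pm(k+1)}$. This matters for Case~2: there $\lambda_1=\lambda_2=(-1)^{k+1}\Gamma/\Lambda\in\{+1,-1\}$, and the condition $\lambda_1^N\neq 1$ with $N$ odd forces $\lambda_1=-1$, i.e.\ $\Gamma=-\Lambda$ when $k$ is odd and $\Gamma=\Lambda$ when $k$ is even. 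Your ``$z^{\pm(k+1)}=\pm1$'' formulation does not recover this cleanly. For Case~3 there is no need for a resultant or a product-over-roots argument: since $|\lambda_{1,2}|=1$ and $\lambda_2=\overline{\lambda_1}$, the condition is simply $\lambda_1^N\neq 1$, which is exactly the displayed inequality in the statement. Once you replace your speculative $z^{k+1}$-factor with the actual eigenvalue formula for $Q$, the rest of your outline goes through.
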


\begin{proof}
The proof of this lemma can be found in the Appendix \ref{sec:a02}.
\end{proof}

\medskip

Next, we will focus on error estimates of the projection $\pst$ based on the three cases as categorized in Lemma \ref{lem:globalp}.
 
\begin{Lem}[Global projection: error estimates for Case 1]
\label{lem:globalpcase1}
  When   the parameter choice belongs to Case 1 in Lemma \ref{lem:globalp}, we have for $p=2, \infty,$
\begin{eqnarray}
\label{eqn:estimate1}
\|\pst u - u\|_{L^p(I)} &\le& C h^{k+1}|u|_{W^{k+1, \infty}(I)} \Bigg ( 1+ \Big ( \frac{|\lt|+1}{|\lt|-1} \left ( \|Q_1 V_1\|_\infty + h^{-1} \|Q_1 V_2\|_\infty \right ) \notag \\
&& +  \frac{1}{|\lt|-1} \left ( \|V_1\|_\infty + h^{-1} \|V_2\|_\infty \right ) \Big ) \Bigg ), \quad \textrm{if}\  \Gamma<0, \notag \\
\|\pst u - u\|_{L^p(I)} &\le& C h^{k+1}|u|_{W^{k+1, \infty}(I)} \Bigg ( 1 + \Big ( \frac{|\lo|+1}{|\lo|-1} \left ( \|(I_2-Q_1) V_1\|_\infty + h^{-1} \|(I_2-Q_1) V_2\|_\infty \right ) \notag \\
&& +  \frac{1}{|\lo|-1} \left ( \|V_1\|_\infty + h^{-1} \|V_2\|_\infty \right ) \Big ) \Bigg ), \quad \textrm{if}\  \Gamma>0,
\end{eqnarray}
where  $Q_1$ is given by \eqref{eqn:q1} or \eqref{eqn:q12} depending on the   parameter choices as shown in the proof; $I_2$ is the $2 \times 2$ identity matrix; $V_1,V_2$ are given by \eqref{eqn:v12}; and $\lo, \lt$ are the  eigenvalues of $Q$ as defined in \eqref{eqn:eig2}. 

\end{Lem}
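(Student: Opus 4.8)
The plan is to set up the global linear system that defines $\pst u$ on a uniform mesh and reduce it, via the first relation \eqref{eqn:psts1} (which pins down all but the boundary-related degrees of freedom element-by-element), to a $2N \times 2N$ linear system for the ``top'' coefficients of $\pst u$ on each cell, with right-hand side built from the data $u, u_x$ at the nodes. Expanding $\pst u$ on each $I_j$ in a scaled Legendre basis, conditions \eqref{eqn:psts2}--\eqref{eqn:psts3} couple only the two highest-order modal coefficients of neighboring cells, so the system matrix is $2\times 2$ block-circulant: each block row involves the unknown pair on cell $I_j$ and on $I_{j-1}$ (or $I_{j+1}$). I would write this as $(\Gamma I_2 + \Lambda\, C)\,\mathbf{d} = \mathbf{r}$-type structure, where $C$ is the shift, and diagonalize the block-circulant matrix by the discrete Fourier transform; the relevant scalar quantities are exactly $\Gamma$ and $\Lambda$ from Lemma \ref{lem:globalp}. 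Following the circulant-matrix technique of \cite{XingKdvDG, MengDGerror_est, liu2015optimal}, the inverse is expressed through a matrix $Q$ (with eigenvalues $\lo,\lt$, $|\lt|>1>|\lo|$ when we are in Case 1) and a geometric-type series $\sum_j Q^j$; because we are in Case 1, $Q$ has one eigenvalue outside and one inside the unit circle, so this series splits into a forward-summable part and a backward-summable part. This is where the factors $\frac{|\lt|+1}{|\lt|-1}$, $\frac{1}{|\lt|-1}$ (and the $Q_1$, $I_2-Q_1$ spectral projectors onto the two eigenspaces) come from.

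Concretely, the steps are: (i) reduce to the $2N\times 2N$ boundary system using \eqref{eqn:psts1} and a standard local estimate (Bramble--Hilbert) to control the interior part — this contributes the leading ``$1$'' inside the parentheses times $Ch^{k+1}|u|_{W^{k+1,\infty}}$; (ii) identify the block-circulant structure and its symbol, pulling out $\Gamma$ and $\Lambda$; (iii) define $Q := -(1/\Lambda)(\Gamma I_2 + \text{something})$ or the appropriate companion-type matrix whose powers appear in the Neumann-like expansion of the inverse, with eigenvalues $\lo,\lt$ solving the scalar quadratic implicit in Lemma \ref{lem:globalp}; (iv) construct $V_1, V_2$ as the (rescaled) right-hand-side vectors coming from $u$ and $u_x$ evaluated at nodes, with the factor $h^{-1}$ on $V_2$ reflecting that a derivative condition scales like $h^{-1}$ relative to a function-value condition; (v) bound $\|\mathbf{d}\|_\infty$ by splitting the resolvent along the spectral decomposition of $Q$ and summing the two one-sided geometric series, the dominant (slowly decaying) one governed by $|\lt|$ when $\Gamma<0$ and by $|\lo|$ when $\Gamma>0$ — this dichotomy is exactly the two cases displayed in \eqref{eqn:estimate1}, and $Q_1$ vs.\ $I_2-Q_1$ selects which eigenspace the ``boundary-layer'' contribution lives in; (vi) translate the $\ell^\infty$ bound on modal coefficients back into an $L^p(I)$ bound on $\pst u - u$ via the inverse inequality on each cell and summation over $j$, using that the number of cells $N$ times $h$ is bounded.

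The main obstacle, as the authors themselves flag in the introduction, is step (v): carrying out the resolvent bound for the $2\times2$ block-circulant matrix rather than a scalar circulant. One must diagonalize (or at least triangularize) $Q$, track the two eigenvalues $\lo,\lt$ through their implicit quadratic relation with $\Gamma/\Lambda$, verify the sign/magnitude claims ($|\lo|<1<|\lt|$ under $|\Gamma|>|\Lambda|$), and handle the spectral projectors $Q_1$, $I_2-Q_1$ uniformly in $h$ — in particular showing that $\|Q_1\|_\infty$ and $\|I_2-Q_1\|_\infty$ stay bounded, or at worst are absorbed into the stated $\|Q_1 V_i\|_\infty$ terms. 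A secondary technical point is being careful that the reduction in step (i) is legitimate when $k=1$ (where \eqref{eqn:psts1} is vacuous and the whole system is purely the boundary system) versus $k\ge 2$. I expect the rest — the DFT diagonalization of the block-circulant matrix, the Bramble--Hilbert interior estimate, and the final inverse-inequality bookkeeping — to be routine, and I would relegate the explicit forms of $Q_1$ (i.e.\ \eqref{eqn:q1}, \eqref{eqn:q12}) and $V_1, V_2$ (i.e.\ \eqref{eqn:v12}) to the computation in the proof body, exactly as the statement anticipates.
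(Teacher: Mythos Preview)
Your overall architecture is right --- block-circulant structure, eigendecomposition of the $2\times 2$ symbol $Q$, spectral projectors $Q_1$, $I_2-Q_1$, and geometric sums yielding the $\frac{|\lt|+1}{|\lt|-1}$, $\frac{1}{|\lt|-1}$ factors --- but there is a genuine gap in step (i)/(iv). You propose to set up the $2N\times 2N$ system for the top two Legendre coefficients of $\pst u$ itself, with right-hand side built from $u$ and $u_x$ at the nodes. That right-hand side is $O(1)$, not $O(h^{k+1})$, so bounding the solution of the linear system gives you nothing about the \emph{error}. The paper's device is to work instead with the difference $Wu := \pst u - P_h^1 u$, where $P_h^1$ is the local projection \eqref{eqn:ppm} with the known optimal estimate \eqref{eqn:ppmerr}. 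Because $P_h^1$ shares the interior moment conditions \eqref{eqn:psts1}, $Wu$ has only the top two Legendre coefficients $\alpha_{j,k-1},\alpha_{j,k}$ nonzero on each cell, and these satisfy the \emph{same} block-circulant system $M\alpha=(\tau,\iota)^T$ but with right-hand side driven by the local-projection defects $\eta_j=(u-P_h^1 u)^+|_{\jp}=O(h^{k+1})$ and $\theta_j=(u_x-(P_h^1 u)_x)^-|_{\jp}=O(h^k)$. After multiplying by $A^{-1}$ this yields $\eta_j V_1+\theta_j V_2$, which is where the $h^{k+1}$ prefactor and the $h^{-1}$ weight on $V_2$ actually come from --- not from a scaling heuristic about derivative conditions. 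The ``$1$'' inside the parentheses then arises from the triangle inequality $\|\pst u-u\|\le\|\pst u-P_h^1 u\|+\|P_h^1 u-u\|$, not from a separate Bramble--Hilbert estimate on interior coefficients as you suggest.

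A secondary point: the paper does not pass through the DFT. It writes the inverse explicitly as $M^{-1}=\mathrm{circ}(r_0,\ldots,r_{N-1})\otimes A^{-1}$ with $r_j=Q^j(I_2-Q^N)^{-1}$, splits $r_j=d_1^jQ_1+d_2^j(I_2-Q_1)$ via the eigenprojectors, and uses $d_2^j=-d_1^{N-j}$ together with $|\lo|<1<|\lt|$ (when $\Gamma<0$) to bound $\sum_j|d_1^j|\le\frac{|\lt|}{|\lt|-1}$ and $\sum_j|d_2^j|\le\frac{1}{|\lt|-1}$ directly. Your DFT route would land in the same place, but the explicit circulant inverse is shorter here.
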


\begin{proof}
The proof of this lemma can be found in the Appendix \ref{sec:a03}.
\end{proof}

\bigskip
 \eqref{eqn:estimate1} provides  error bound  that can be computed once the parameters $\ao, \bo, \bt$ are given, yet its dependence on  the mesh size $h$ is not fully revealed, particularly when the parameters  $\ao, \bo, \bt$ also have $h$-dependence. To clarify such relations,
next we will consider the following common choice of parameters, where $\ao$ has no dependence on $h$, $\bo=\tilde{\bo} h^{A_1}, \bt=\tilde{\bt} h^{A_2},\tilde{\bo}, \tilde{\bt}$ are nonzero constants that do not depend on $h.$  If indeed $\bo$ or $\bt$ is zero, it is equivalent to let $A_1, A_2 \rightarrow + \infty$ in the discussions below.   We will discuss if the parameter choice will yield optimal $(k+1)$-th order accuracy. To distinguish different cases,  we illustrate the choice  of parameters $A_1, A_2$ in  Figure \ref{fig:case1}.  For example, Case 1.1 means $A_1 > -1, A_2 >1$, Case 1.5 means $A_1 =  -1, A_2 = 1$ and Case 1.7.1 means $A_1 >-1, A_2 = 1$. 
The main results are summarized in  Algorithm \ref{alg:case1}.  

\begin{figure}[h]
\centering
\includegraphics[width = 0.7 \textwidth]{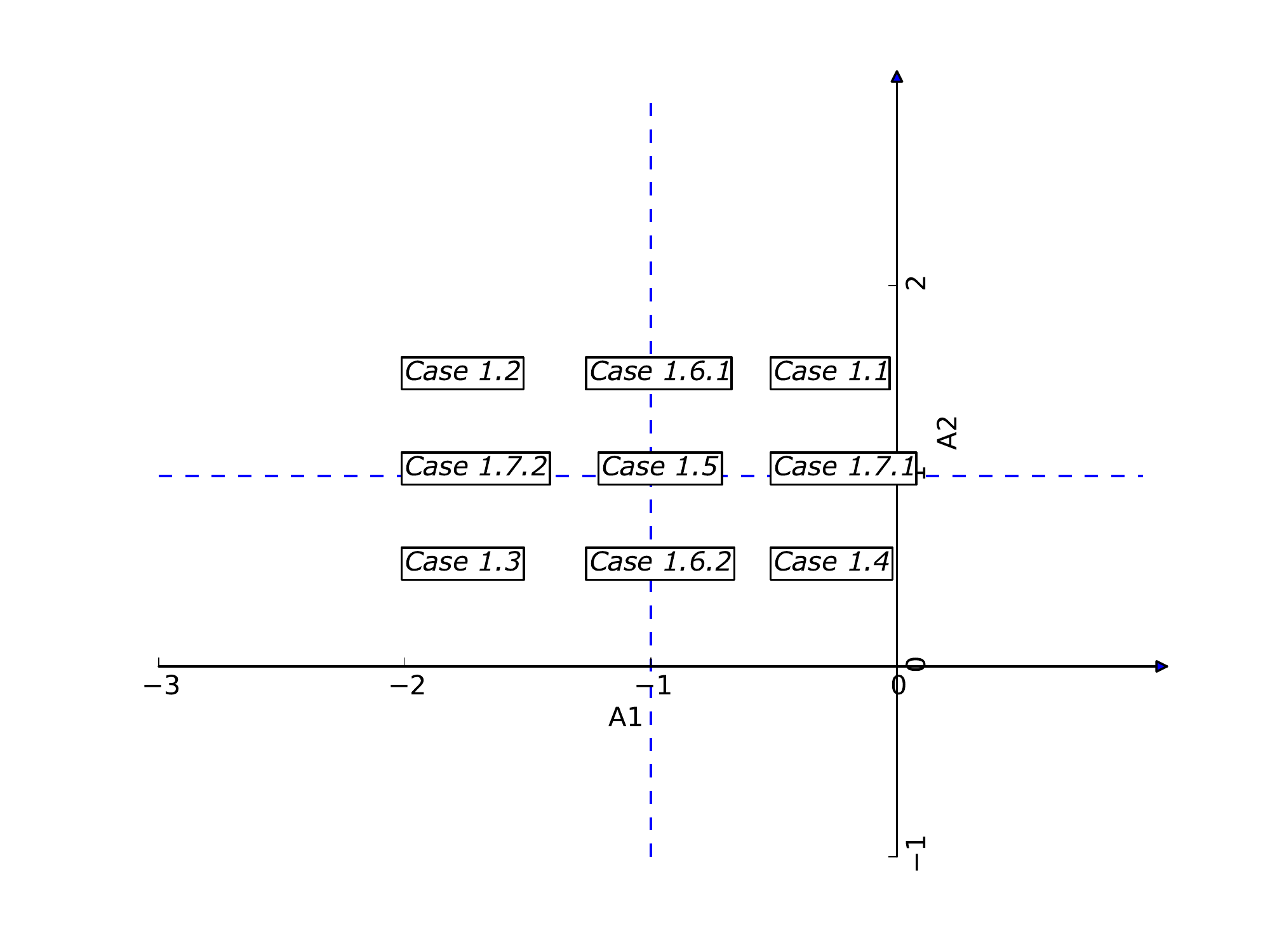}
\caption{A sketch to illustrate the different cases parameterized by the values of $A_1, A_2$.}
\label{fig:case1}
\end{figure}

\medskip

\begin{algorithm}[H]
\SetKwData{Left}{left}\SetKwData{This}{this}\SetKwData{Up}{up}
\SetKwFunction{Union}{Union}\SetKwFunction{FindCompress}{FindCompress}
\SetKwInOut{Input}{Input}\SetKwInOut{Output}{Output}
\SetAlgoLined
\caption{Interpretation of error estimate \eqref{eqn:estimate1}.}
\label{alg:case1}
\BlankLine
\eIf {$k=1$ and  $A_2 <1,$}
{
  $\pst$ is suboptimal and is $(k+A_2)$-th order accurate,  \label{1}%
}
{
	\eIf {$\lim_{h\to 0} \abs{\lo,\lt} = 1$ with $ \left | \lo,\lt \right | = 1 + O(h^{\delta/2}),$}
	{	
	$\pst$ is suboptimal and is $(k+1-\delta)$-th order accurate, \label{3}
	}
	{
	$\pst$ has optimal $(k+1)$-th order error estimates.
	}
}
\end{algorithm}

The main reason of order reduction for  $k=1, A_2 <1$ in  Statement \ref{1} (i.e. line 2 of the algorithm above) is that the term such as $\frac{1}{\abs{\lt}-1}\|Q_1V_1\|_\infty$ is of  $O(h^{A_2-1})$ instead of $O(1)$, and this will cause $(1-A_2)$-th order reduction. 
This happens for   Cases 1.3,  1.4 and    1.6.2 when $k=1.$
The main reason of order reduction    in  Statement \ref{3} is because of the terms such as $\frac{1}{|\lt|-1}, \frac{|\lt|+1}{|\lt|-1}$ in \eqref{eqn:estimate1}.
The fractions $\frac{1}{|\lt|-1}, \frac{|\lt|+1}{|\lt|-1}$ cannot be controlled by a constant if $\lim_{h \to 0} |\lt| = 1$. 
By definition of $\lo,\lt$ in \eqref{eqn:eig2}, we know that $\left |\frac{\g}{\la} \right | \to 1 \Leftrightarrow \left | \lo,\lt \right | \to 1$. More precisely, if $ \left |\frac{\g}{\la} \right | = 1 + O(h^{\delta}), \delta >0$, then $ \left | \lo,\lt \right | = 1 + O(h^{\delta/2}),$ then $\frac{|\lt| +1}{|\lt| -1}$ or $\frac{1}{|\lt|-1} = O(h^{-\delta/2})$. The relation $\g^2 - \la^2 = (b_1 - b_2)(b_1+b_2) + c_2^2$ also indicates that there is some cancellation of leading terms in $b_1 - b_2 $ or $b_1 + b_2$, making $\|Q_1\|_\infty \sim O(h^{-\delta/2})$, multiplying these factors together will result in  $\delta$-th order reduction in the error estimation of $\pst$. Note that $b_1, b_2, c_2$ and $Q_1$ are defined in \eqref{eqn:b1}, \eqref{eqn:b2}, \eqref{eqn:c1} and \eqref{eqn:q1}.

Then we look at what parameter choices  make $ \left |\frac{\g}{\la} \right | \to 1$. Since%
\begin{align*}
\frac{\Gamma}{\Lambda} & = \begin{cases} 
 k + \frac{\bo + \frac{k^2(k^2-1)}{h^2} \bt - \frac{k^2}h}{\Lambda} & k>1,\\
 1 + \frac{\bo - \frac{1}h}{\Lambda} 	& k=1,
 \end{cases}
\end{align*}
 we have  

\begin{enumerate}
	\item \underline{Case 1.1} ($A_1>-1, A_2>1$) with $k=1,\ao = 0$,$\left | \frac{\Gamma}{\Lambda} \right | \to \left |\frac{\ot + 2\ao^2}{\ot - 2\ao^2} \right | = 1.$  
	\item \underline{Case 1.6.1} ($A_1=-1,A_2 >1$) $\tilde \bo =  \frac{k(k\pm1)}{2} + 2\ao^2 k(k\mp1), \abs{\frac{\g}{\la}} \to \abs{k+\frac{\bo - \frac{k^2}{h}}{\la}} \to 1$. 
	\item \underline{Case 1.6.2} ($A_1=-1,A_2 <1$) with $k>1,$ $ \tilde \bo = \frac{k(k\pm 1)}{2},\abs{\frac{\g}{\la}} \to \abs{k+\frac{\frac{k^2(k^2-1)\bt}{h^2}}{\la}}\to 1$.
	\item  \underline{Case 1.7.1} ($A_1>-1,A_2=1$) $ \tilde \bt = \frac{1}{2k(k\mp1)} + \frac{2\ao^2}{k(k\pm1)}, \abs{\frac{\g}{\la}} \to \abs{k+\frac{\frac{k^2(k^2-1)\bt}{h^2} - \frac{k^2}{h}}{\la}} \to 1.$
	\item \underline{Case 1.7.2} ($A_1<-1,A_2=1$)  $\tilde \bt = \frac{1}{2k(k\pm 1)}, \abs{\frac{\g}{\la}} \to \abs{k+\frac{\bo}{\la}} \to 1$.
	\end{enumerate}

\begin{rem}
We only considered $T$  given by \eqref{eqn:T} in the discussion above.   By Appendix \ref{sec:a1}, we can conclude that under the parameter conditions in Case 1, $(b_1+b_2)(b_1-b_2) = 0$ only can happen if $A_1=-1,A_2=1$ with \eqref{eqn:b1mb2} or \eqref{eqn:b1pb2}.  This is Case 1.5, for which we always have optimal error estimate.
\end{rem}

\begin{rem}
\label{rem:case12}
Through numerical tests, we found that
 \eqref{eqn:estimate1} is mostly sharp with two exceptions. When $\lim_{h\to0} | \lo,\lt | = 1$, the estimates show that there will be order reduction for error of $\pst$, while in numerical experiments (see e.g. Tables \ref{tab:case11}, \ref{tab:case12}), such order reduction is observed only when $\lim_{h\to0} \lo,\lt = 1$ but not $-1$. We believe when $\lim_{h\to0} \lo,\lt = -1,$ a refined estimate can be obtained similar  to Lemma \ref{lem:globalpcase2optimal} for Case 2. We have not carried out this estimate in this work.

    Another   example we find for which  \eqref{eqn:estimate1} is not sharp  is $k=2,A_1=-2,-3,A_2=1,  (\ao,\tilde \bo,\tilde \bt) =(0.25, -1, \frac{1}{12})$, where parameters belong to Case 1.7.2, $\tilde \bt = \frac{1}{2k(k+1)}$ and $\lo,\lt \to 1+ O(h^{-(1+A_1)/2})$. The theoretical results predict accuracy order of $(k+2+A_1)$ but numerical experiments in Table \ref{tab:case13} show the order to be $(k+3+A_1)$. 
  Our estimations can't resolve this one order difference. This special parameter may trigger a cancellation we didn't capture in analysis. We will improve this estimate in our future work.   
\end{rem}

We can then generalize the approach to Cases 2 and 3.

\begin{Lem}[Global projection: error estimates for Case 2]
\label{lem:globalpcase2}
 When   the parameter choice belongs to Case 2 in Lemma \ref{lem:globalp} and $\pst$ is well defined, we have
\beq
\label{eqn:estimate2}
\|\pst u -  u\|_{L^p(I)} \le
C  h^{k+1} |u|_{W^{k+1,\infty}(I)} \Bigg(1+ h^{-1} \left (1 +     \frac{ h^{-1} \|Q_2\|_\infty}{ |\Gamma|} \right ) \left ( \|V_1\|_\infty + h^{-1} \|V_2\|_\infty \right ) \Bigg ),
\eeq
where $p=2, \infty,$ $Q_2$ is given by \eqref{eqn:q22} and $V_1,V_2$ are given by \eqref{eqn:v12}.  \end{Lem}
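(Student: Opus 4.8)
The plan is to follow the template established for Case~1 in Lemma~\ref{lem:globalpcase1}, replacing the invertibility and Green's-function step by an argument tailored to the degenerate spectrum $|\Gamma|=|\Lambda|$. First I would split
\[
\pst u - u = \sigma + \rho, \qquad \rho := \pi_h u - u,\quad \sigma := \pst u - \pi_h u \in V_h^k ,
\]
with $\pi_h$ the standard $L^2$ projection onto $V_h^k$. By the Bramble--Hilbert lemma \cite{ciarletFEM}, $\|\rho\|_{L^p(I)}\le C h^{k+1}|u|_{W^{k+1,p}(I)}$, and trace and inverse inequalities control the nodal residuals $\widehat{\rho}$ and $\widetilde{\rho_x}$ (the latter one order lower, which is why it will appear with an extra $h^{-1}$). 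Because $\pi_h$ reproduces all moments against $P^{k-2}(I_j)$, the piece $\sigma$ lies in the reduced space $\mathcal{W}_h=\{v_h\in V_h^k:\int_{I_j}v_h\,w\,dx=0\ \forall w\in P^{k-2}(I_j)\}$, of dimension $2N$ on the uniform mesh; expanding $\sigma|_{I_j}$ in the scaled Legendre basis of degrees $k-1,k$ with coefficient vector $z_j\in\mathbb{C}^2$, the relations \eqref{eqn:psts2}--\eqref{eqn:psts3} rewritten for $\sigma$ become a $2\times2$ block-circulant linear system $A\mathbf z=\mathbf b$, whose right-hand side is built from $\widehat{\rho}$, $\widetilde{\rho_x}$ and, after factoring out $h^{k+1}|u|_{W^{k+1,\infty}(I)}$, reduces to the bounded vectors $V_1,V_2$ of \eqref{eqn:v12}.

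Next I would reduce the block-circulant system to a two-term recursion in $z_j$ governed by the transfer matrix $Q$ of \eqref{eqn:eig2}, whose eigenvalues satisfy $\lo\lt=1$ and $\lo+\lt=2\Gamma/\Lambda$. In Case~1 one has $|\Gamma/\Lambda|>1$, so $Q$ is diagonalizable with $|\lt|<1<|\lo|$ and the periodic recursion is solved by a geometric Green's function, producing the factors $\tfrac{1}{|\lt|-1},\tfrac{|\lt|+1}{|\lt|-1}$ of \eqref{eqn:estimate1}. In Case~2, $\Gamma/\Lambda=\pm1$, so $\lo=\lt=\pm1$ is a \emph{double} eigenvalue and $Q$ is a single Jordan block, $Q=(\pm1)(I_2+N_0)$ with $N_0$ nilpotent; hence $Q^j=(\pm1)^j(I_2+jN_0)$ grows only linearly in $j$. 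Closing the periodic recursion therefore sums $N$ such terms, costing a factor $O(N)=O(h^{-1})$ rather than $O(1)$, with the Jordan/generalized-eigenvector data carried by the matrix $Q_2$ of \eqref{eqn:q22}; normalizing by $|\Gamma|$ is natural because $|\Gamma|=|\Lambda|$ is the size of the off-diagonal coupling after the Legendre rescaling. The hypotheses inherited from Lemma~\ref{lem:globalp} --- $N$ odd, with $\Gamma=-\Lambda$ for $k$ odd and $\Gamma=\Lambda$ for $k$ even --- are precisely what keep every Fourier mode of the block-circulant matrix nonsingular, so $A^{-1}$ exists with $\|A^{-1}\|_\infty$ of order $h^{-1}\bigl(1+h^{-1}\|Q_2\|_\infty/|\Gamma|\bigr)$.

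Finally I would assemble: $\|\sigma\|_{L^p(I)}\le C\|\mathbf z\|_\infty$, the Legendre-coefficient-to-$L^p$ passage contributing no net power of $h$ once the $N$ cells are summed (so $p=2$ and $p=\infty$ are handled at once), and $\|\mathbf z\|_\infty\le \|A^{-1}\|_\infty\,\|\mathbf b\|_\infty\le C\,h^{-1}\bigl(1+h^{-1}\|Q_2\|_\infty/|\Gamma|\bigr)\,h^{k+1}|u|_{W^{k+1,\infty}(I)}\bigl(\|V_1\|_\infty+h^{-1}\|V_2\|_\infty\bigr)$; together with the bound on $\rho$ this gives \eqref{eqn:estimate2}. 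The main obstacle is this linear-algebra core: solving the \emph{periodic} recursion when the transfer matrix is non-diagonalizable, extracting exactly the amplification $1+h^{-1}\|Q_2\|_\infty/|\Gamma|$ together with the extra $h^{-1}$ coming from the Jordan structure, and verifying that the parity and sign conditions of Lemma~\ref{lem:globalp} suffice to bound $\|A^{-1}\|_\infty$ uniformly throughout; by comparison the approximation-theory and basis-scaling steps are routine.
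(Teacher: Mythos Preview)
Your plan is essentially the paper's proof: the paper likewise reduces to the $2\times2$ block-circulant system, uses the Jordan form of $Q$ when $|\Gamma|=|\Lambda|$ (so that $\lambda_1=\lambda_2=-1$ under the well-posedness conditions of Lemma~\ref{lem:globalp}), computes explicitly
\[
r_j=Q^j(I_2-Q^N)^{-1}=\tfrac{(-1)^j}{2}I_2+(-1)^j\,\tfrac{-N+2j}{4\Gamma}\,Q_2,
\]
and then sums the $N$ terms, the overall factor $N\sim h^{-1}$ together with the linear-in-$j$ Jordan contribution yielding exactly the amplification $h^{-1}\bigl(1+h^{-1}\|Q_2\|_\infty/|\Gamma|\bigr)$ you describe.

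The one point that needs correcting is your choice of reference projection. The paper compares $\pst u$ not with the $L^2$ projection $\pi_h$ but with the local projection $P_h^1$ of \eqref{eqn:ppm}, whose defining conditions $(P_h^1 u)^-=u$ and $(P_h^1 u)_x^+=u_x$ at the interfaces annihilate two of the four boundary residuals; this is precisely what makes $A^{-1}$ applied to the right-hand side factor as $\eta_j V_1+\theta_j V_2$ with the \emph{specific} vectors $V_1,V_2$ of \eqref{eqn:v12} that appear in the statement. With $\pi_h$ all four interface residuals $\rho^\pm,\rho_x^\pm$ survive, and while you would still obtain an estimate of the same order in $h$, it would not be expressed through $\|V_1\|_\infty+h^{-1}\|V_2\|_\infty$ as stated. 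Switching from $\pi_h$ to $P_h^1$ fixes this with no change to the rest of your argument.
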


\begin{proof}
The proof of this lemma can be found in the Appendix \ref{sec:a03}.
\end{proof}

\begin{rem}
\label{rem:globalp2}
Detailed discussions on the parameter choices for Case 2 are contained in Appendix \ref{sec:a2}. Under these conditions, we actually have $\g = C \left ( \bo -\frac{k^2}{h} + \frac{k^2(k^2-1)}{h^2}\bt \right ),$ and by \eqref{eqn:v12est} \beq
\label{eqn:case2v12}
 \|V_1\|_\infty + h^{-1} \|V_2\|_\infty \sim C \left (1 + \frac{\max(\abs{\bo}, \abs{\ot - \ao}/h)}{\abs{\g}} \right),
\eeq 
in addition
\beq
\label{eqn:case2q2}
\frac{\|Q_2\|_\infty}{\abs{\g}} \sim C \frac{\max \left ( \abs{\bo}, \oh, \frac{\abs{\bt}}{h^{2}} \right )}{\left | \g \right |}.
\eeq
In the best-case scenario, the right hand side of the two equations above are bounded by a constant.  Therefore, \eqref{eqn:estimate2}  yields the accuracy order to be $(k-1)$ at best. 
\end{rem}

\begin{Lem}[Global projection: error estimates for Case 3]
\label{lem:globalpcase3}
 When   the parameter choice belongs to Case 3 in Lemma \ref{lem:globalp} and $\pst$ is well defined, assuming $\abs{1-\lo^N} \sim O(h^{\delta'}), $ we have
\begin{eqnarray}
\label{eqn:estimate3}
\|\pst u -  u\|_{L^p(I)}  \le 
 C  h^{k+1} |u|_{W^{k+1, \infty}(I)}  \Big(1+  h^{-(\delta'+1)} {\|Q_1\|_\infty} 
  \big ( \|V_1\|_\infty + h^{-1} \|V_2\|_\infty \big ) \Big )
\end{eqnarray}
where $p=2, \infty$ and $Q_1,V_1,V_2$ are given by \eqref{eqn:q1} and \eqref{eqn:v12}.
\end{Lem}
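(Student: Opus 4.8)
\emph{Plan.} The argument will run in parallel with the proof of Lemma~\ref{lem:globalpcase1} in Appendix~\ref{sec:a03}: the reduction to a $2\times2$ block-circulant linear system is identical, and the only genuinely new ingredient is the estimation of the inverse of that system in the regime $|\g|<|\la|$ of Case~3, where the eigenvalues $\lo,\lt$ of $Q$ sit on the unit circle. First I would split $\pst u - u = \xi - \eta$ with $\eta = u - Pu$ for the auxiliary local projection $P$ already used in the Case~1 analysis, so that $\eta$ obeys the optimal bound $\|\eta\|_{L^p(I)} \le Ch^{k+1}|u|_{W^{k+1,\infty}(I)}$ for $p=2,\infty$, and all the work is in controlling $\xi = \pst u - Pu \in V_h^k$. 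By \eqref{eqn:psts1}, on each cell $\xi|_{I_j}$ is $L^2(I_j)$-orthogonal to $P^{k-2}(I_j)$, so it is determined by a coefficient vector $\vec c_j\in\mathbb{C}^2$ in the basis of the degree-$k$ and degree-$(k-1)$ scaled Legendre correctors; substituting into the flux identities \eqref{eqn:psts2}--\eqref{eqn:psts3} and subtracting the corresponding (inexact) fluxes of $Pu$ turns them into a periodic two-term recurrence $A\vec c_{j+1}+B\vec c_j=\vec s_j$, $\vec c_{j+N}=\vec c_j$, with the same $2\times2$ blocks as in Lemma~\ref{lem:globalp} and with source $\vec s_j$ built from the flux errors of $P$, which after pulling out $h^{k+1}|u|_{W^{k+1,\infty}(I)}$ is measured by the vectors $V_1,V_2$ of \eqref{eqn:v12}. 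Up to here nothing differs from Case~1.

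Next I would solve the recurrence explicitly using periodicity, multiplying by $\inva$ to get $\vec c_{j+1}=Q\vec c_j+\inva\vec s_j$ (with $Q=-\inva B$, whose eigenvalues $\lo,\lt$ are those in \eqref{eqn:eig2}) and then $\vec c_j=\sum_{m=0}^{N-1}G_{jm}\,\inva\vec s_m$ with a Green's matrix that is, up to relabeling, of the form $G_{jm}=Q^{\ell}(I_2-Q^N)^{-1}$, $\ell=(j-m-1)\bmod N$. Here the Case~3 regime $|\g|<|\la|$ enters: the characteristic polynomial of $Q$ has negative discriminant, so $\lo,\lt$ is a complex-conjugate pair, and since their product equals $1$ (by \eqref{eqn:eig2}) one has $|\lo|=|\lt|=1$ with $\lo\ne\lt$; hence $Q$ is diagonalizable via the spectral projector $Q_1$ of \eqref{eqn:q1}, giving $Q^n=\lo^n Q_1+\lt^n(I_2-Q_1)$ and $(I_2-Q^N)^{-1}=(1-\lo^N)^{-1}Q_1+(1-\lt^N)^{-1}(I_2-Q_1)$. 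The crucial estimate I would prove is
$\|Q^n(I_2-Q^N)^{-1}\|_\infty \le \|Q_1\|_\infty/|1-\lo^N| + \|I_2-Q_1\|_\infty/|1-\lt^N| \le C h^{-\delta'}\|Q_1\|_\infty$,
using $|\lo|=|\lt|=1$, the hypothesis $|1-\lo^N|=|1-\lt^N|\sim O(h^{\delta'})$, and $\|I_2-Q_1\|_\infty\le 1+\|Q_1\|_\infty$ (the stray $1$ absorbed since $h^{-\delta'}\ge 1$). Keeping the power $Q^n$ and the resolvent $(I_2-Q^N)^{-1}$ paired in this way is what keeps only a single power of $\|Q_1\|_\infty$, as the statement reflects. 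Multiplying this uniform bound on $G_{jm}$ by the number of cells $N\sim h^{-1}$ (there is no geometric decay to sum here, unlike in Case~1, so the full factor $N$ is incurred) and by the source norm $\|V_1\|_\infty+h^{-1}\|V_2\|_\infty$ gives $\max_j\|\vec c_j\|_\infty\le C h^{-(\delta'+1)}\|Q_1\|_\infty\,h^{k+1}|u|_{W^{k+1,\infty}(I)}(\|V_1\|_\infty+h^{-1}\|V_2\|_\infty)$. Finally I would pass back from $\|\vec c_j\|_\infty$ to $\|\xi\|_{L^p(I)}$ through the scaling-robust norm equivalence on the two-dimensional corrector space on each cell (for $p=2$ the cellwise $h_j^{1/2}$ factors combine with the $N$ cells to give an $O(1)$ constant; for $p=\infty$ there is nothing to do), add the contribution of $\eta$, and use the triangle inequality to obtain \eqref{eqn:estimate3}.

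The main obstacle is exactly the step above: in Case~3 the eigenvalues of $Q$ lie on the unit circle, so the Green's matrix of the circulant system does not decay, and the Case~1 mechanism of summing a convergent geometric series (which produced the benign $(|\lt|-1)^{-1}$-type factors there) is unavailable. One must instead accept the loss of the factor $N\sim h^{-1}$ from the non-decaying sum over cells and the factor $|1-\lo^N|^{-1}\sim h^{-\delta'}$ from the near-resonance $\lo^N\approx 1$, while still being careful that $Q_1$ itself may blow up as $h\to 0$ when the conjugate pair $\lo,\lt$ approaches the Case~2 boundary of nearly colliding eigenvalues — hence the spectral decomposition must be arranged so that $\|Q_1\|_\infty$ enters only to first power. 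The remaining ingredients (the splitting $\pst u - u=\xi-\eta$, the reduction to the block-circulant recurrence, and the cellwise norm equivalences) are as in the Case~1 proof and require no new ideas.
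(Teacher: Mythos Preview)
Your proposal is correct and follows essentially the same route as the paper: the paper reuses the Case~1 setup verbatim (the difference $Wu=\pst u-\ppm u$, the block-circulant system, and the spectral splitting $r_j=d_1^jQ_1+d_2^j(I_2-Q_1)$), then in Case~3 simply observes $\sum_{j=0}^{N-1}|d_{1,2}^j|=N/|1-\lo^N|\sim h^{-(\delta'+1)}$ since $|\lambda_{1,2}|=1$, and plugs this into the same bound \eqref{eqn:abd}. Your organization (bounding each Green's block $Q^n(I_2-Q^N)^{-1}$ uniformly and then summing over $N$ cells) is just a reordering of the same estimate, and your care to keep $Q^n$ and $(I_2-Q^N)^{-1}$ paired so that only one power of $\|Q_1\|_\infty$ appears is exactly what the paper's $r_j$ decomposition achieves.
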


\begin{proof}
The proof of this lemma can be found in the Appendix \ref{sec:a03}.
\end{proof}

\begin{rem}
In the best-case scenario, the term $\|Q_1\|_\infty$ and $ \|V_1\|_\infty + h^{-1} \|V_2\|_\infty$ are bounded by constants. While the term  $h^{-(\delta'+1)}$ is of order at least $h^{-1}$, leading to loss of at least one order of accuracy.  
\end{rem}

Lemmas \ref{lem:globalpcase2} and  \ref{lem:globalpcase3} only give suboptimal results. In what follows, we aim at improving the convergence order    with stronger assumption on the regularity of  the solution   by using  additional techniques involving cancellation of errors from neighboring terms and global approximation by Fourier expansions. 
We will need the following lemma that resembles  Proposition 3.2 in \cite{XingKdvDG}, and also the fast decay property of Fourier coefficients of the exact solution. The proof of Lemma \ref{lem:rep} follows the same line as in  \cite{XingKdvDG} and is skipped for brevity. %

\begin{Lem} {\rm (Detailed error estimates for $P_h^1$)}
\label{lem:rep}
When $P_h^1$ is applied to a periodic and sufficiently smooth function $u$ on uniform mesh, denote $\eta_j = (u-P_h^1 u)^+|_{j+\ot}$ and $\theta_j = (u_x-(P_h^1 u)_x)^-|_{j+\ot}, \ \ j=0,\cdots, N-1$, we have:  
\begin{eqnarray}
\label{lem:rep1}
\eta_{j-1} & = & \mu  h^{k+1} u^{(k+1)} (x_{j-\ot}) + \mu_2  h^{k+2} u^{(k+2)}(x_{\jm}) + C_2 h^{k+3}, \\
\label{lem:rep2}
\theta_j & = & \rho  h^k u^{(k+1)} (x_{j-\ot}) + \rho_2   h^{k+1} u^{(k+2)}(x_{\jm}) +C_3 h^{k+2},
\end{eqnarray}
where $\mu,\mu_2,\rho$ and $\rho_2$ are constants that depend only on $k.$ $C_2$ and $C_3$ depend on $k$ and $| u|  _{W^{k+3,\infty}(I_j)}$. Thus, by using Mean-Value Theorem, an additional $h$ can be extracted,
\begin{eqnarray}
|\eta_j-\eta_{j+1}| & \leq & C h^{k+2} | u|  _{W^{k+2,\infty}(I)}, \\
|\theta_j-\theta_{j+1}| & \leq & C h^{k+1} | u|  _{W^{k+2,\infty}(I)}. \label{eqn:thetaest}
\end{eqnarray}
\end{Lem}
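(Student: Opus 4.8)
The plan is to exploit that $\ppm$ is a \emph{local} projection: the whole statement then reduces to a cell-by-cell computation on a fixed reference interval, combined with a Taylor expansion of $u$ carried one order beyond what is strictly needed. This is exactly the strategy behind Proposition 3.2 of \cite{XingKdvDG}.

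First I would rescale. On a uniform mesh of size $h$, fix a cell $I_j=(x_\jm,x_\jp)$ and set $\hat x=(x-x_\jm)/h\in[0,1]$ and $\hat u(\hat x)=u(x_\jm+h\hat x)$. Under this change of variables, conditions \eqref{eqn:ppm1}--\eqref{eqn:ppm3} become the conditions for a fixed, $h$- and $j$-independent operator $\widehat P\colon W^{1,\infty}(0,1)\to P^k([0,1])$: $\int_0^1\widehat P\hat w\,\hat v\,d\hat x=\int_0^1\hat w\,\hat v\,d\hat x$ for all $\hat v\in P^{k-2}$, $(\widehat P\hat w)(1)=\hat w(1)$, and $(\widehat P\hat w)'(0)=\hat w'(0)$. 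For every $k\ge1$ this $\widehat P$ is well defined (it is the classical alternating / Gauss--Radau projection; the corresponding homogeneous square linear system has only the trivial solution), it reproduces polynomials ($\widehat P\hat p=\hat p$ for $\hat p\in P^k$, by uniqueness), and, being a bounded linear map between finite-dimensional spaces, it satisfies $\|\widehat P\hat w\|_{W^{1,\infty}(0,1)}\le C(k)\,\|\hat w\|_{W^{1,\infty}(0,1)}$. In these variables the quantities to be estimated on cell $I_j$ are $\eta_{j-1}=(\hat u-\widehat P\hat u)(0)$ and $\theta_j=\tfrac1h(\hat u-\widehat P\hat u)'(1)$.

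Next I would Taylor-expand $\hat u$ about $\hat x=0$ to degree $k+2$,
\[
\hat u(\hat x)=\sum_{m=0}^{k+2}\frac{h^m u^{(m)}(x_\jm)}{m!}\,\hat x^m+\hat r(\hat x),\qquad \|\hat r\|_{W^{1,\infty}(0,1)}\le C\,h^{k+3}\,|u|_{W^{k+3,\infty}(I_j)},
\]
the remainder bound being immediate from $\hat u^{(m)}(\hat x)=h^m u^{(m)}(x_\jm+h\hat x)$. Applying $\widehat P$, using linearity and exact reproduction of the degree-$\le k$ part, gives
\[
\hat u-\widehat P\hat u=\frac{h^{k+1}u^{(k+1)}(x_\jm)}{(k+1)!}\bigl(\hat x^{k+1}-\widehat P[\hat x^{k+1}]\bigr)+\frac{h^{k+2}u^{(k+2)}(x_\jm)}{(k+2)!}\bigl(\hat x^{k+2}-\widehat P[\hat x^{k+2}]\bigr)+\bigl(\hat r-\widehat P\hat r\bigr).
\]
Evaluating at $\hat x=0$ yields \eqref{lem:rep1}, with $\mu=-\widehat P[\hat x^{k+1}](0)/(k+1)!$ and $\mu_2=-\widehat P[\hat x^{k+2}](0)/(k+2)!$ (constants depending only on $k$) and with $C_2h^{k+3}$ absorbing $(\hat r-\widehat P\hat r)(0)=O\bigl(h^{k+3}|u|_{W^{k+3,\infty}(I_j)}\bigr)$ via the boundedness of $\widehat P$. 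Differentiating that identity, evaluating at $\hat x=1$ and multiplying by $1/h$ yields \eqref{lem:rep2}, with $\rho=\bigl((k+1)-\tfrac{d}{d\hat x}\widehat P[\hat x^{k+1}](1)\bigr)/(k+1)!$, $\rho_2$ defined the same way from $\hat x^{k+2}$, and $C_3h^{k+2}$ absorbing $\tfrac1h(\hat r-\widehat P\hat r)'(1)=O\bigl(h^{k+2}|u|_{W^{k+3,\infty}(I_j)}\bigr)$.

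Finally, the difference estimates follow from the representations. Keeping only the leading term of \eqref{lem:rep1} --- which needs only $W^{k+2,\infty}$ regularity --- we have $\eta_j=\mu h^{k+1}u^{(k+1)}(x_\jp)+O\bigl(h^{k+2}|u|_{W^{k+2,\infty}(I)}\bigr)$, so $\eta_j-\eta_{j+1}=\mu h^{k+1}\bigl(u^{(k+1)}(x_\jp)-u^{(k+1)}(x_{j+\frac{3}{2}})\bigr)+O\bigl(h^{k+2}|u|_{W^{k+2,\infty}(I)}\bigr)$; since the two half-integer nodes are $h$ apart, the Mean Value Theorem extracts the extra power of $h$ and gives the stated bound, and $\theta_j-\theta_{j+1}$ is treated identically. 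I do not expect a genuine obstacle here --- which is why both \cite{XingKdvDG} and the present paper omit the details --- the only points requiring care being the $h$-uniform boundedness of $\widehat P$ (this is what makes $\mu,\mu_2,\rho,\rho_2$ depend on $k$ alone and $C_2,C_3$ depend only on $k$ and $|u|_{W^{k+3,\infty}(I_j)}$) and the consistent use of the single expansion point $x_\jm$ for both $\eta_{j-1}$ and $\theta_j$ on $I_j$, which is precisely the structure encoded in the stated form of \eqref{lem:rep1}--\eqref{lem:rep2}.
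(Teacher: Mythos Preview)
Your proposal is correct and follows precisely the approach the paper indicates: the paper explicitly skips the proof, stating only that it ``follows the same line as in \cite{XingKdvDG},'' and your rescaling-plus-Taylor argument on the reference cell is exactly that strategy. The identification of $\eta_{j-1}$ and $\theta_j$ as values of $\hat u-\widehat P\hat u$ and $\tfrac1h(\hat u-\widehat P\hat u)'$ at the endpoints of the reference interval, the polynomial reproduction of $\widehat P$, and the $h$-uniform boundedness yielding $k$-only constants are all handled correctly, as is the observation that the difference bounds need only the one-term expansion and hence only $W^{k+2,\infty}$ regularity.
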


With Lemma \ref{lem:rep} and Fourier analysis, we can prove the following two lemmas with refined error estimates.

\begin{Lem}[Global projection: refined error estimates for Case 2]
\label{lem:globalpcase2optimal}
 When   the parameter choice belongs to Case 2 in Lemma \ref{lem:globalp} and $\pst$ is well defined, we have
\beq
\label{eqn:estimate2optimal}
\|\pst u - u\|_{L^p(I)} \le C h^{k+1} \|u\|_{W^{k+4,\infty}(I)} \left ( 1+ \left (1+\frac{ \norm{Q_2}_\infty}{|\Gamma|} \right ) (\|V_1\|_\infty + h^{-1} \|V_2 \|_\infty) \right ),
\eeq
where $p=2, \infty,$ $Q_2$ is given by \eqref{eqn:q22}, $V_1,V_2$ are given by \eqref{eqn:v12}.
\end{Lem}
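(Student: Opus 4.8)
The plan is to reuse the machinery from the proof of Lemma~\ref{lem:globalpcase2} in Appendix~\ref{sec:a03}, but to replace the crude size estimates on the flux-defect data by the sharper term-by-term expansions of Lemma~\ref{lem:rep}, and then exploit the fast decay of the Fourier coefficients of the smooth exact solution to neutralize the near-resonant Fourier mode of the $2\times2$ block-circulant system; this is where the stronger regularity hypothesis $u\in W^{k+4,\infty}(I)$ is consumed.

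First I would write $\pst u - u = w_h - (u - P_h^1 u)$ with $w_h := \pst u - P_h^1 u \in V_h^k$. Since $u - P_h^1 u$ is already $O(h^{k+1})$ in $L^p$ by \eqref{eqn:ppmerr}, it suffices to bound $w_h$. As $\pst u$ and $P_h^1 u$ reproduce the same $P^{k-2}(I_j)$ moments of $u$, the function $w_h$ has vanishing $P^{k-2}(I_j)$ moments on each cell, hence $w_h|_{I_j}$ lives in a fixed two-dimensional space and is completely determined by its flux conditions. Combining \eqref{eqn:psts2}--\eqref{eqn:psts3} for $\pst u$ with \eqref{eqn:ppm2}--\eqref{eqn:ppm3} for $P_h^1 u$ and the notation $\eta_j,\theta_j$ of Lemma~\ref{lem:rep}, one finds at each node $x_{\jp}$
\[
\widehat{w_h} = (\ot-\ao)\,\eta_j - \bt\,\theta_j, \qquad \widetilde{(w_h)_x} = (\ot-\ao)\,\theta_j + \bo\,\eta_j .
\]
This produces a $2N\times2N$ linear system $M\mathbf c = \mathbf d$ for the per-cell degrees of freedom $\mathbf c$ of $w_h$, with $\mathbf d$ assembled from the right-hand sides above and $M$ being $2\times2$ block-circulant by uniformity of the mesh and periodicity. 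I would diagonalize $M$ by the discrete Fourier transform, with $\ell$-th block symbol $\hat M_\ell = B_0 + \omega^\ell B_1$, $\omega = e^{2\pi\mathrm i /N}$; then the pointwise values of $w_h$, and hence $\|w_h\|_{L^p(I)}$ after norm equivalence and scaling on the finite-dimensional cell spaces, are controlled by $\max_\ell\|\hat M_\ell^{-1}\|$ times the DFT of $\mathbf d$, the basis-polynomial evaluations yielding precisely $V_1,V_2$ and the Case~2 structure yielding $Q_2$ and $\Gamma$, exactly as in Lemma~\ref{lem:globalpcase2}.

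The crux is the single worst Fourier index $\ell^\ast$ — the one with $\omega^{\ell^\ast}$ nearest the resonant phase $+1$ or $-1$, according to whether the repeated eigenvalue of $Q$ equals $1$ or $-1$ — at which, because in Case~2 the block $Q$ degenerates into a Jordan block with eigenvalue of modulus one, the relevant resolvent entries grow like $N\sim h^{-1}$; this is the origin of the extra $h^{-1}$ in Lemma~\ref{lem:globalpcase2}. Here I would invoke Lemma~\ref{lem:rep}: substituting \eqref{lem:rep1}--\eqref{lem:rep2} shows that the data sequences $(\ot-\ao)\eta_j - \bt\theta_j$ and $(\ot-\ao)\theta_j + \bo\eta_j$ coincide, up to a remainder of size $O(h^{k+3})$ (resp.\ $O(h^{k+2})$) controlled by $\|u\|_{W^{k+3,\infty}(I)}$ and the parameter prefactors, with a fixed smooth periodic function of $x$ evaluated at the grid nodes (a combination of $h^{k+1}u^{(k+1)}$ and $h^{k+2}u^{(k+2)}$ times the prefactors). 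The $\ell^\ast$-th DFT coefficient of such a nodal sampling is, by periodicity of $u$ and repeated summation by parts — equivalently, by the spectral accuracy of the midpoint rule (when $\ell^\ast$ sits at the constant mode) or of the corresponding sign-modulated quadrature (when $\ell^\ast$ sits at the alternating mode) applied to smooth periodic integrands — smaller than whatever fixed negative power of $h$ is needed, provided one extra derivative of $u$ beyond what Lemma~\ref{lem:rep} already uses is available; this is exactly what forces the $W^{k+4,\infty}$ assumption. As a result the resonant mode contributes only at the $O(h^{k+1})$ level even after being amplified by the $O(h^{-1})$ resolvent and by the $h^{-1}$ hidden inside $h^{-1}\|Q_2\|_\infty/|\Gamma|$, while every non-resonant index $\ell\neq\ell^\ast$ is bounded exactly as in Lemma~\ref{lem:globalpcase2} and contributes only the factor $\big(1+\|Q_2\|_\infty/|\Gamma|\big)\big(\|V_1\|_\infty + h^{-1}\|V_2\|_\infty\big)$. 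Combining the two and re-adding the $O(h^{k+1})$ bound for $u - P_h^1 u$ gives \eqref{eqn:estimate2optimal}.

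I expect the decisive difficulty to be making this cancellation quantitative with constants \emph{uniform} in $h$ and in the (possibly $h$-dependent) parameters $\ao,\bo,\bt$ — that is, proving that the spectral decay of the $\ell^\ast$-th DFT coefficient of the smooth nodal function delivered by Lemma~\ref{lem:rep} precisely cancels the combined $O(h^{-2})$ amplification carried in Case~2 by the Jordan-block resolvent and by $h^{-1}\|Q_2\|_\infty/|\Gamma|$. This is the step that genuinely uses the refined expansion of Lemma~\ref{lem:rep} rather than its crude bounds, and carrying the scalar argument of \cite{XingKdvDG} over to the $2\times2$ block-circulant setting — where the resonant direction is a Jordan chain of $\hat M_{\ell^\ast}$ rather than a one-dimensional eigenspace — is the genuinely new point; the rest (norm equivalence, inverse inequalities, and re-expressing everything through $V_1,V_2,Q_2,\Gamma$) is routine and mirrors Appendix~\ref{sec:a03}.
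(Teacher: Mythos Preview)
Your plan captures the right mechanism---trading the crude bounds on $\eta_j,\theta_j$ for the refined expansions of Lemma~\ref{lem:rep} and then using Fourier decay of the smooth exact solution to absorb the Case~2 resolvent growth---but your organization around a \emph{single} resonant DFT index $\ell^\ast$ is too coarse and, as written, would not close. In Case~2 the symbol satisfies $\hat M_\ell^{-1}=(I_2-\omega^\ell Q)^{-1}A^{-1}$ with $Q$ similar to a Jordan block at eigenvalue $-1$, so its norm behaves like $|1+\omega^\ell|^{-2}$; this is $O(N^2)$ at $\ell^\ast=(N\pm1)/2$ but still $O(N^2/m^2)$ at $\ell=\ell^\ast\pm m$, and $\sum_{m\ge1}N^2/m^2$ is itself $O(N^2)$. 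Thus you cannot declare the non-resonant indices ``bounded exactly as in Lemma~\ref{lem:globalpcase2}'': the Fourier decay of the nodal data must be fed into \emph{every} near-resonant mode and then summed, not just into $\ell^\ast$.

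The paper sidesteps this by staying in physical space with the explicit Case~2 formula $r_j=\tfrac{(-1)^j}{2}I_2+(-1)^j\tfrac{-N+2j}{4\Gamma}Q_2$ from \eqref{eqn:rjcase2}. The $I_2$-piece $\sum_j(-1)^j\eta_{j+m}$ is handled by pairing consecutive terms and invoking the difference bound in Lemma~\ref{lem:rep}. For the $Q_2$-piece one substitutes the expansion \eqref{lem:rep1} to reduce to $S_{k+1}=\sum_j(-1)^j\tfrac{-N+2j}{2}\,u^{(k+1)}(x_{j+m+\frac12})$, then Fourier expands the \emph{continuous} function $u^{(k+1)}$ and evaluates the $j$-sum in closed form as $W(n)=-2\omega^{(m+1)n}/(1+\omega^n)^2$. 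The remaining sum $\sum_{n\in\mathbb Z}\hat f(n)W(n)$ is bounded by splitting $n$ into blocks of length $N$ and balancing the $|1+\omega^n|^{-2}$ singularity near $n\approx N/2$ against $|\hat f(n)|\lesssim |u|_{W^{k+4,1}}/(1+|n|^3)$; this is precisely where the extra derivative is spent. Your DFT-of-the-system route is Fourier-dual to this and can be made to work, but it needs the same block-wise balancing (now over $\ell$, via aliasing of the nodal samples) rather than the single-index dichotomy you sketch.
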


\begin{proof} 
The proof of this lemma can be found in the Appendix \ref{sec:a04}.
\end{proof}

\begin{rem}
\label{rem:case2}
The difference between \eqref{eqn:estimate2optimal} and \eqref{eqn:estimate2} are the two $h^{-1}$ factors and the norm of $u$, which corresponds to the different regularity requirement for the estimation. It is obvious that \eqref{eqn:estimate2optimal} is always a better estimate if the solution is smooth enough.

In most cases,  \eqref{eqn:estimate2optimal} yields optimal accuracy order, except when  $k=1, \ao = 0, \bo = 0, \bt=O(h^{A_2}), A_2 <1$, where the $\pst$ is only $(k+A_2)$-th order accurate because $\frac{\|Q_2\|_\infty}{|\la|} = \frac{|b_1+b_2|}{|\la} = \frac{|-\frac{-4}{h^2}\bt + \frac{1}{2h}|}{\frac{1}{2h}} \sim O(h^{A_2-1})$ in \eqref{eqn:estimate2optimal}. %
This is verified numerically in Table \ref{tab:central2}.%
\end{rem}

\begin{Lem}[Global projection: refined error estimates for Case 3]
\label{lem:globalpcase3optimal}
 When the parameter choice belongs to Case 3 in Lemma \ref{lem:globalp} and $\pst$ is well defined, assuming $\abs{1-\lo^N}=O(h^{\delta'})$ and $|\lo - 1|=O(h^{\delta/2})$ with $0 \leq \delta/2 \leq 1, $ we have
\beq
\label{eqn:estimate3optimal}
\|\pst u - u\|_{L^p(I)}  \le 
C h^{k+1} \|u\|_{W^{k+3, \infty}(I) } \left (1+ h^{-(\delta'+\delta/2)}  \|Q_1\|_\infty  (\|V_1\|_\infty + h^{-1} \|V_2\|_\infty) \right ) ,
\eeq
where $p=2, \infty,$ $\lo$ is the eigenvalue of $Q$ defined in \eqref{eqn:eig2}, $Q_1$ is given by \eqref{eqn:q1}, $V_1,V_2$ are given by \eqref{eqn:v12}.  
\end{Lem}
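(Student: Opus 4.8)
The plan is to follow the same strategy used for Lemma~\ref{lem:globalpcase2optimal} (Case 2), combining the crude bound of Lemma~\ref{lem:globalpcase3} with the fine structural information on the projection errors furnished by Lemma~\ref{lem:rep}. The starting point is the representation of $\pst u - u$ in terms of the block-circulant system: writing the cell-interface errors $\eta_j = (u-\pst u)^+|_{\jp}$ and $\theta_j = (u_x - (\pst u)_x)^-|_{\jp}$, the solvability analysis in Appendix~\ref{sec:a02} expresses the solution vector as a geometric-type series in the diagonalizing eigenvalue $\lo$ (the one with $|\lo|<1$ or, here, $|\lo|\to1$), with the dangerous factor being $\tfrac{1}{1-\lo^N}\sim h^{-\delta'}$. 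First I would isolate, from the right-hand-side data vectors $V_1,V_2$ built out of the local $\ppm$-projection residuals (cf.\ \eqref{eqn:v12}), the leading terms given by \eqref{lem:rep1}--\eqref{lem:rep2}; the key point is that these leading terms are, up to the scalar $h^{k+1}u^{(k+1)}(x_{\jm})$ (resp.\ $h^{k}u^{(k+1)}(x_{\jm})$), \emph{independent of $j$} to highest order, so that when they are fed through the circulant summation $\sum_{n} \lo^{\,n}(\cdots)$ the slowly varying part telescopes.

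The second step is precisely that telescoping/summation-by-parts argument. Because $\sum_{n=0}^{N-1}\lo^{\,n}$ equals $\tfrac{1-\lo^N}{1-\lo}$, a constant-in-$j$ source contracts the apparent $\tfrac{1}{1-\lo^N}$ blow-up against a matching $1-\lo^N$, and what actually multiplies $\tfrac{1}{1-\lo^N}$ is the \emph{difference} $\eta_j-\eta_{j+1}$ (resp.\ $\theta_j-\theta_{j+1}$), for which Lemma~\ref{lem:rep} supplies an extra power of $h$ via the Mean-Value Theorem (estimates \eqref{eqn:thetaest} and its $\eta$-analogue). Concretely I would split $V_i = V_i^{\mathrm{lead}} + V_i^{\mathrm{rem}}$, handle $V_i^{\mathrm{rem}}$ by the crude Lemma~\ref{lem:globalpcase3} bound (it already carries the extra $h$), and for $V_i^{\mathrm{lead}}$ perform a discrete integration by parts in the convolution with the circulant kernel, transferring the difference operator onto the data. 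This is where the hypotheses $|1-\lo^N| = O(h^{\delta'})$ and $|\lo-1| = O(h^{\delta/2})$ enter: one gains $h^{\delta'}$ from cancelling $1-\lo^N$ but then the geometric sum of $|\lo|^n\le(1+O(h^{\delta/2}))^n$ over $n\le N = O(h^{-1})$ costs a bounded factor only because $\delta/2\ge 0$, while the residual difference contributes $h^{+1}$; netting these against the bare $h^{-(\delta'+1)}\|Q_1\|_\infty$ of \eqref{eqn:estimate3} leaves $h^{-(\delta'+\delta/2)}\|Q_1\|_\infty$, with the loss of regularity ($W^{k+3,\infty}$ instead of $W^{k+1,\infty}$) coming from needing the second-order Taylor terms in \eqref{lem:rep1}--\eqref{lem:rep2} and their differences.

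The last step is bookkeeping: reassemble $\|\pst u - u\|_{L^p(I_j)}$ from the interface errors via the local inverse-type estimates already used in the proofs of Lemmas~\ref{lem:localp} and \ref{lem:globalpcase1} (expressing the cellwise projection through $\ppm$ plus a correction governed by $\eta_j,\theta_j$, then using \eqref{eqn:ppmerr} and scaling), sum over $j$, and collect the $\|V_1\|_\infty + h^{-1}\|V_2\|_\infty$ and $\|Q_1\|_\infty$ factors exactly as they appear in \eqref{eqn:estimate3optimal}. The main obstacle I anticipate is making the telescoping rigorous when $\lo$ is genuinely complex and close to (but not equal to) $1$ on the unit circle: one must control $\sum_{n} |\lo|^{\,n}\,|\lo-1|^{-1}$-type sums uniformly in $h$, and verify that the ``leading part'' of the source is $j$-independent with an error of size $h^{k+2}$ (for $\eta$) and $h^{k+1}$ (for $\theta$) \emph{after} the nonconstant Fourier modes of $u^{(k+1)}$ are accounted for — this is exactly the role of the fast decay of the Fourier coefficients of the smooth exact solution alluded to before Lemma~\ref{lem:rep}, and matching the decay rate against the $N$-fold geometric sum is the delicate quantitative point.
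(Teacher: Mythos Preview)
Your summation-by-parts idea is a genuinely different route from the paper's, and it can be made to work, but your accounting of the crucial $h^{-\delta/2}$ factor is wrong. In Case 3 the eigenvalues sit \emph{exactly} on the unit circle, $\lo=e^{i\theta}$ with $|\lo|=1$, so the inequality ``$|\lo|^n\le(1+O(h^{\delta/2}))^n$'' is vacuous and cannot be the source of any loss. What actually happens in Abel summation is that the partial sums $E_j=\sum_{l=0}^{j}\lo^{\,l}=(1-\lo^{\,j+1})/(1-\lo)$ satisfy $|E_j|\le 2/|1-\lo|\sim h^{-\delta/2}$; after the transfer of differences you are left with $\tfrac{1}{1-\lo^N}\sum_{j}E_j(\eta_{j+m}-\eta_{j+1+m})$, which is bounded by $h^{-\delta'}\cdot N\cdot h^{-\delta/2}\cdot h^{k+2}=h^{k+1-\delta'-\delta/2}$. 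So both $\tfrac{1}{1-\lo^N}$ and $\tfrac{1}{1-\lo}$ survive together with the differenced data --- the $1-\lo^N$ cancellation you describe applies only to the boundary term $E_{N-1}\,\eta_{N-1+m}/(1-\lo^N)=\eta_{N-1+m}/(1-\lo)$. Once this is fixed, your argument goes through and in fact needs only $u\in W^{k+2,\infty}$, one derivative fewer than stated.

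The paper takes a different path: it isolates the leading term $\mu h^{k+1}u^{(k+1)}(x_{j+m+1/2})$ from Lemma~\ref{lem:rep}, Fourier-expands $u^{(k+1)}(x)=\sum_n\hat f(n)e^{2\pi i n x/L}$, and evaluates the inner geometric sum $\sum_{j=0}^{N-1}(e^{i\theta}\omega^n)^j$ \emph{exactly}. Because $\omega^{nN}=1$, this equals $(1-e^{iN\theta})/(1-e^{i\theta}\omega^n)$, so the prefactor $(1-\lo^N)^{-1}$ cancels completely and one is left with $\mathcal S_1=\sum_n\hat f(n)\,\omega^{mn}/(1-e^{i\theta}\omega^n)$. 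The denominator is then estimated mode-by-mode: away from the resonance $\omega^n\approx e^{-i\theta}$ it is $\gtrsim h^{\delta/2}$, while in a window of $O(h^{\delta/2-1})$ resonant modes it can only be bounded by $h^{\delta'+1}$, and the $1/n^2$ decay of $\hat f(n)$ (this is where $W^{k+3}$ enters) controls that window. Your Abel route avoids the Fourier machinery and the resonant/non-resonant split entirely, at the price of tracking the partial-sum size correctly; the paper's route is more explicit about the mechanism but demands slightly more regularity.
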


\begin{proof}
The proof of this lemma can be found in the Appendix \ref{sec:a05}.
\end{proof}

\begin{rem}
\label{rem:case3} 
If $0 \leq \delta/2 \leq 1$, Lemma \ref{lem:globalpcase3optimal} is always a better estimate than Lemma \ref{lem:globalpcase3} when the solution is smooth enough. If $\delta/2 > 1$, we can show $\delta/2 = \delta' +1$. This is because $|1-\lo|=|1-e^{i\theta}| = 2|\sin(\theta/2)|,$ and $|1-\lo^N|=|1-e^{iN\theta}| = 2|\sin(N\theta/2)|.$ When $\delta/2>1,$ one can assert that $|1-\lo|\sim \theta, |1-\lo^N|\sim N\theta,$  i.e. $\delta/2 = \delta' +1.$ With this condition, we notice that Lemma \ref{lem:globalpcase3} yields an reduction of $\delta$-th order in convergence rate by checking the order of each term as is done for Case 1. This order reduction   is consistent with numerical experiments in Example \ref{exa:globalp3}. Therefore, there is no need to further improve the estimates as is done for $0 \leq \delta/2 \leq 1$ in Lemma \ref{lem:globalpcase3optimal}.

\end{rem}

Now we can summarize the estimation of $\pst$  for some frequently used flux parameters. For IPDG scheme with $\ao = \bt = 0, \bo = c/h$, and DDG scheme discussed in \cite{luddgsch} with $\ao = constant, \bo = c/h, \bt = 0$,  and the more general scale invariant parameter choice $\ao = constant, \bo = c/h, \bt = ch$, $\pst$ always have optimal error estimates. For those parameters, we can show that the eigenvalues $\lo, \lt$ are always   constants independent of $h$, therefore, either by estimates for local projection in Lemma \ref{lem:localp} or global projection in  Lemmas \ref{lem:globalpcase1}, \ref{lem:globalpcase2optimal}, \ref{lem:globalpcase3optimal}, we will  have optimal convergence rate. Corresponding numerical results are shown in Tables \ref{tab:localp2} and \ref{tab:case15}. 

For a natural parameter choice where $\ao,   \bo, \bt$ are all   real constants, if $\bt \neq 0$, then $\pst$ has first order convergence rate when $k=1$ and optimal convergence rate when $k>1$ by Lemmas \ref{lem:localp}, \ref{lem:globalpcase1}, \ref{lem:globalpcase2optimal}, \ref{lem:globalpcase3optimal}. Corresponding numerical results are shown in Tables \ref{tab:localp} and \ref{tab:central2}. Lastly, for central flux $\ao=\at=\bo=\bt= 0$, this parameter choice belongs to Case 2 when $k=1$ and Case 1 when $k>1$, thus we can verify that $\pst$ has optimal convergence rate by Lemmas   \ref{lem:globalpcase1} and \ref{lem:globalpcase2optimal}. Corresponding numerical results are shown in Table \ref{tab:central}.

\subsection{Error estimates of the DG scheme }
\label{sec:l2estimate}

We are now ready to state the main theorem, which is the semi-discrete $L^2$ error estimates of the DG scheme \eqref{eqn:scheme} with numerical flux \eqref{eqn:flux2}.
\begin{Thm}
\label{thm:convergence}
Assume that the exact solution $u$ and the nonlinear term $f(|u|^2)$ of 
\eqref{eqn:nls1} are sufficiently smooth with bounded derivatives for any time $t\in (0,T_e]$ and that the numerical flux parameters in \eqref{eqn:flux2} satisfy the existence conditions of $\pst$ in Lemmas \ref{lem:localp} or \ref{lem:globalp}. Furthermore, assume $\epsilon_h = u - \pst u$ has at least first order convergence rate in $L^2$ and $L^\infty$ norm from the results in Section \ref{sec:projection}. With periodic boundary conditions, uniform mesh size and solution space $V_h^k$ $(k\geq 1)$, the following error estimation holds for $u_h$, which is the numerical solution of \eqref{eqn:scheme} with flux \eqref{eqn:flux2}:
\begin{align}
\label{eqn:errscheme}
\|u-u_h\|_{L^2({I})} %
 \leq C_\star  \left (\| (u - u_h)|_{t=0} \|_{L^2(I)}  +  \|(\epsilon_h)_t\|_{L^2(I)} + \|\epsilon_h\|_{L^2(I)}  \right ),
\end{align}
where $C_\star$ depends on $k,\|f\|_{W^{2,\infty}},$ $u$   as well as final time $T_e$, but not on $h$. In other words, the error of the DG scheme \eqref{eqn:scheme} has same order of convergence rate as the projection $\pst$ in Lemmas \ref{lem:localp}, \ref{lem:globalpcase1}-\ref{lem:globalpcase3optimal} depending on the parameter choices, if the numerical initial condition is chosen sufficiently accurate.  
\end{Thm}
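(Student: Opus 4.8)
The plan is to follow the standard energy method for error estimates of DG schemes, adapted to the ultra-weak formulation and using the special projection $\pst$ to kill the troublesome numerical-flux boundary terms. First I would set up the error decomposition $u - u_h = (u - \pst u) - (u_h - \pst u) =: \epsh - \eh$ with $\epsh = u - \pst u \in$ "projection error" and $\eh = u_h - \pst u \in V_h^k$. Since the exact solution $u$ satisfies the same weak relation \eqref{eqn:scheme2} (with exact fluxes, which are single-valued by continuity), subtracting gives the error equation $a_{\ao,\at,\bo,\bt}(u - u_h, \vh) = i\int_I \big(f(|u|^2)u - f(|u_h|^2)u_h\big)\vh\,dx$ for all $\vh \in V_h^k$. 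The key point is that, by the defining conditions \eqref{eqn:psts1}--\eqref{eqn:psts3} of $\pst$, all the numerical flux contributions of $\epsh$ in the bilinear form vanish: $\widehat{\epsh} = 0$ and $\widetilde{(\epsh)_x} = 0$ at all interfaces, and the interior volume term $\int_{I_j}\epsh(\vh)_{xx}$ is handled because $(\vh)_{xx} \in P^{k-2}(I_j)$. So when I test the error equation with $\vh = \bar\eh$ and take the real part (mimicking the stability computation \eqref{eqn:stab2}--\eqref{eqn:stab3}), the left side reduces to $\frac{1}{2}\frac{d}{dt}\|\eh\|_{L^2}^2$ up to (i) a $\frac{d}{dt}\int\epsh\bar\eh$ time-derivative term from the mass term and (ii) sign-definite interface jump terms in $\eh$ coming from the $\bo, \bt, \ao$ flux parameters, which have the good sign by the stability condition $\ao + \at = 0$ and hence can be dropped.

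Next I would estimate the right-hand nonlinear term. Writing $f(|u|^2)u - f(|u_h|^2)u_h$ and using that $f \in W^{2,\infty}$ with $u$ bounded, a Lipschitz bound in terms of $u - u_h$ gives $|f(|u|^2)u - f(|u_h|^2)u_h| \le C(|u|,|u_h|,\|f\|_{W^{2,\infty}})|u - u_h|$; to convert the $|u_h|$-dependence into something uniform I would invoke an a priori assumption (or bootstrap) that $\|u_h\|_{L^\infty}$ stays bounded, which is legitimate since $\epsh$ has at least first-order convergence and an inverse inequality controls $\|\eh\|_{L^\infty} \le Ch^{-1/2}\|\eh\|_{L^2}$; the standard argument closes this on a time interval as long as $\|\eh\|_{L^2}$ stays, say, $\le h$. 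Then $|\mathrm{Re}\, i\int(f(|u|^2)u - f(|u_h|^2)u_h)\bar\eh\,dx| \le C\|u-u_h\|_{L^2}\|\eh\|_{L^2} \le C(\|\epsh\|_{L^2} + \|\eh\|_{L^2})\|\eh\|_{L^2}$. Combining with the time-derivative cross term $|\frac{d}{dt}\int\epsh\bar\eh| $, which after integrating and using Young's inequality contributes $\|(\epsh)_t\|_{L^2}$ and $\|\epsh\|_{L^2}$ terms, I obtain a differential inequality $\frac{d}{dt}\|\eh\|_{L^2}^2 \le C\big(\|\eh\|_{L^2}^2 + \|(\epsh)_t\|_{L^2}^2 + \|\epsh\|_{L^2}^2\big)$.

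I would then apply Gr\"onwall's inequality on $[0,T_e]$ to get $\|\eh(t)\|_{L^2}^2 \le e^{CT_e}\big(\|\eh(0)\|_{L^2}^2 + \int_0^{T_e}(\|(\epsh)_t\|_{L^2}^2 + \|\epsh\|_{L^2}^2)\,ds\big)$, and since $\|\eh(0)\|_{L^2} \le \|(u-u_h)|_{t=0}\|_{L^2} + \|\epsh(0)\|_{L^2}$, a triangle inequality $\|u - u_h\|_{L^2} \le \|\epsh\|_{L^2} + \|\eh\|_{L^2}$ yields \eqref{eqn:errscheme} with $C_\star$ absorbing $e^{CT_e}$ and the bounds on $f$ and $u$. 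The verification that the bootstrap assumption $\|\eh\|_{L^2}\le h$ is not exited before $T_e$ follows the usual continuation argument: the Gr\"onwall bound gives $\|\eh\|_{L^2} = O(h^{r})$ with $r \ge 2$ (one order better than the $O(h)$ threshold, since the projection is at least first order and time differentiation of the projection error is also at least first order under the smoothness hypothesis), which for $h$ small beats the threshold, closing the argument.

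The main obstacle I expect is bookkeeping the interface terms carefully in the ultra-weak setting: I must check that, after testing with $\bar\eh$, every boundary term involving $\epsh$ truly cancels by \eqref{eqn:psts2}--\eqref{eqn:psts3} (the ultra-weak form has both $\widehat{u}$ and $\widetilde{u_x}$ fluxes paired against $(\vh)_x$ and $\vh$ jumps, so one needs both projection conditions), and that the residual jump terms in $\eh$ retain the sign dictated by \eqref{eqn:stab3} so they can be discarded rather than bounded. A secondary subtlety is that $\pst$ may be a global projection, so $\epsh$ is not locally supported, but this does not affect the argument: only the $L^2$ and $L^\infty$ sizes of $\epsh$ and $(\epsh)_t$ enter, and those are supplied by Lemmas \ref{lem:localp}, \ref{lem:globalpcase1}--\ref{lem:globalpcase3optimal}. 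Finally, commuting $\pst$ with $\partial_t$ to bound $\|(\epsh)_t\|_{L^2} = \|(u - \pst u)_t\|_{L^2} = \|u_t - \pst u_t\|_{L^2}$ requires that $\pst$ is linear and time-independent (true, since the parameters are fixed in space-time) and that $u_t$ has the same spatial regularity used in the projection estimates, which is part of the "sufficiently smooth" hypothesis.
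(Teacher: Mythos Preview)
Your overall strategy matches the paper's proof: split $u-u_h$ through $\pst$, use Galerkin orthogonality, exploit \eqref{eqn:psts1}--\eqref{eqn:psts3} so that every flux and volume contribution of $\epsh$ in $a_{\ao,-\ao,\bo,\bt}(\epsh,\overline{\eh})$ vanishes, apply the stability identity to the $\eh$-part, control the nonlinear difference under an $L^\infty$ bootstrap, and close with Gronwall plus a continuation argument. Two points need correction.

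\medskip
\textbf{The bootstrap threshold.} Your continuation step has a gap. Under the stated hypothesis that $\epsh$ is only \emph{first}-order, Gronwall gives $\|\eh\|_{L^2}\le C_\star h$, not $O(h^r)$ with $r\ge 2$ as you claim. With a threshold of $h$ you would need $C_\star\le 1$, which fails since $C_\star\sim e^{CT_e}$. The paper instead takes the a~priori assumption $\|u-u_h\|_{L^2}\le h^{1/2}$: then for $h$ small one has $C_\star h\le\tfrac12 h^{1/2}$, so the threshold is never reached, while the inverse inequality still yields $\|\eh\|_{L^\infty}\le Ch^{-1/2}\cdot h^{1/2}=C$, which suffices to bound $|u_h|$ and $|E|=\big||u_h|^2-|u|^2\big|$ uniformly in the nonlinear estimate.

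\medskip
\textbf{Minor imprecisions.} The cross term arising from $a_{\ao,-\ao,\bo,\bt}(\epsh,\overline{\eh})+\overline{a_{\ao,-\ao,\bo,\bt}(\epsh,\overline{\eh})}$ is exactly $2\,\mathrm{Re}\!\int_I(\epsh)_t\,\overline{\eh}\,dx$, not a total time derivative $\tfrac{d}{dt}\!\int\epsh\overline{\eh}$; it is bounded directly by Young's inequality. Also, with real parameters and $\ao+\at=0$ the interface contribution in the $\eh$-identity is identically zero (energy conservation, cf.\ \eqref{eqn:stab3}), so one obtains $\tfrac{d}{dt}\|\eh\|_{L^2}^2$ on the nose rather than an inequality. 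The paper handles the nonlinearity by a second-order Taylor expansion of $f(|u_h|^2)$ about $|u|^2$ and a three-term decomposition, but your direct Lipschitz argument works equally well once the bootstrap is fixed.
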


\begin{proof}
When $\pst$ exists, we can decompose the error into two parts.
\[
e=u-u_h=u-\pst u + \pst u - u_h : = \epsilon_h + \zeta_h.
\]

By Galerkin orthogonality
\begin{align*}
0 &=  a_{\ao, -\ao, \bo, \bt}(e,v_h)-i \int_I f(|u|^2)uv_h dx+i \int_I f(|u_h|^2)u_hv_h dx    \qquad \forall v_h\in V_h^k\\
&=  a_{\ao, -\ao, \bo, \bt}(\epsilon_h, v_h)+ a_{\ao, -\ao, \bo, \bt}(\zeta_h, v_h)-i \int_I f(|u|^2)uv_h dx+i \int_I f(|u_h|^2)u_hv_h dx.
\end{align*}

Let $v_h = \overline{\zeta_h}$, and take conjugate of above equation, we have
\begin{align}
\label{eqn:err}
&a_{\ao, -\ao, \bo, \bt}(\zeta_h, \overline{\zeta_h})+\overline{a_{\ao, -\ao, \bo, \bt}(\zeta_h, \overline{\zeta_h})}\\
=&  -a_{\ao, -\ao, \bo, \bt}(\epsilon_h, \overline{\zeta_h})-\overline{a_{\ao, -\ao, \bo, \bt}(\epsilon_h, \overline{\zeta_h})} -2\int_I f(|u|^2)\mathrm{Im}(u \overline{\zeta_h})dx+ 2 \int_I f(|u_h|^2)\mathrm{Im}(u_h \overline{\zeta_h}) dx. \notag
\end{align}

By Taylor expansion
\[
f(|u_h|^2) = f(|u|^2) + f'(|u|^2)E + \ot \hat f'' E^2,
\]
where $\hat f'' = f''(c), c$ is a value between $|u_h|^2 $ and $|u|^2.$ $E = |u_h|^2 - |u|^2 = -2\mathrm{Re}(e\overline u) + |e|^2.$
Therefore, the nonlinear part becomes
\begin{align*}
&\int_I f(|u|^2)\mathrm{Im}(u \overline{\zeta_h})dx - \int_I f(|u_h|^2)\mathrm{Im}(u_h \overline{\zeta_h}) dx\\
=&\int_I f(|u_h|^2)\mathrm{Im}\big ( e \overline{\zeta_h} \big) +  \big ( f(|u|^2) - f(|u_h|^2) \big ) \mathrm{Im}(u \overline{\zeta_h}) dx \\
=& \mathcal{N}_1 +  \mathcal{N}_2 + \mathcal{N}_3,
\end{align*}
where
\begin{align*}
& \mathcal{N}_1 = \int_I f(|u|^2) \mathrm{Im}\big ( e \overline{\zeta_h} \big) - f'(|u|^2)E \mathrm{Im}(u \overline{\zeta_h}) dx,\\
& \mathcal{N}_2 = \int_I f'(|u|^2)E \mathrm{Im}\big ( e \overline{\zeta_h} \big) - \ot \hat f'' E^2 \mathrm{Im}(u \overline{\zeta_h}) dx,\\
& \mathcal{N}_3 = - \int_I \ot \hat f'' E^2 \mathrm{Im}\big ( e \overline{\zeta_h} \big),
\end{align*}
will be estimated separately as follows.
\begin{itemize}
\item $\mathcal{N}_1$ and $\mathcal{N}_2$ terms.

Since $e\overline{\zeta_h} = \epsilon_h \overline{\zeta_h} + \abs{\zeta_h}^2$, $\abs{E \mathrm{Im} (u \overline{\zeta_h})} =\abs{(-2 \mathrm{Re} (e\overline u) +\abs{e}^2 )\mathrm{Im} (u \overline{\zeta_h}) } \leq C  ( \|u\|_{L^\infty (I)}^2 + \|u\|_{L^\infty (I)} \|e\|_{L^\infty(I)} )( \|\epsilon_h\|^2_{L^2 (I)} + \|\zeta_h\|^2_{L^2 (I)})$, we have
\begin{align*}
	\abs{\mathcal{N}_1}&  \leq C  \|f\|_{W^{1,\infty}} \left ( 1 +  \|u\|_{L^\infty (I)}^2 + \|u\|_{L^\infty (I)} \|e\|_{L^\infty(I)}  \right ) ( \|\epsilon_h\|^2_{L^2 (I)} + \|\zeta_h\|^2_{L^2 (I)}), \\
	  \abs{\mathcal{N}_2} & \leq C  \|f\|_{W^{2,\infty}} \|E\|_{L^\infty(I)} \left ( 1 +  \|u\|_{L^\infty (I)}^2 + \|u\|_{L^\infty (I)} \|e\|_{L^\infty(I)}  \right ) ( \|\epsilon_h\|^2_{L^2 (I)} + \|\zeta_h\|^2_{L^2 (I)}).
\end{align*}
\item $\mathcal{N}_3$ term.
$$
\abs{\mathcal{N}_3} \leq C  \|f''\|_{L^{\infty}} \|E\|_{L^\infty(I)}^2   ( \|\epsilon_h\|^2_{L^2 (I)} + \|\zeta_h\|^2_{L^2 (I)}).
$$
\end{itemize}

To conduct a proper estimate for the nonlinear part, we would like to make an \emph{a priori} assumption that, for $h$ small enough, 
\beq
\| e\|_{L^2(I)} = \| u - u_h \|_{L^2(I)} \leq   h^{0.5}.
\eeq
 By our assumption on $\pst$,  $\|{\epsilon_h}\|_{L^p(I)} \leq C_1h, p = 2,\infty$,   thus $\| \zeta_h \|_{L^2(I)} \leq C_1 h^{0.5}$ and $\| \zeta_h \|_{L^\infty(I)} \leq C_1$ by inverse inequality, then $\| e \|_{L^\infty(I)} \leq C_1$, $\|E\|_{L^{\infty}(I)} \leq C_1$.  Here and below, $C_1$  is a generic constant that has no dependence on $h$, but may depend on $u$ according to the lemma used to estimate $\epsilon_h.$

Therefore, we get the estimate:
\beq
\label{eqn:nonlinearest}
\abs{\mathcal{N}_1} + \abs{\mathcal{N}_2} + \abs{\mathcal{N}_3} \leq C_1 ( \|\epsilon_h\|^2_{L^2 (I)} + \|\zeta_h\|^2_{L^2 (I)}),
\eeq
where $C_1$ depends on  $ \|f\|_{W^{2,\infty}}$ and $u.$

For linear part of the right hand side in \eqref{eqn:err}, we have
\begin{align*}
a_{\ao, -\ao, \bo, \bt}(\epsilon_h, \overline{\zeta_h}) + \overline{a_{\ao, -\ao, \bo, \bt}(\epsilon_h, \overline{\zeta_h})} & =  \int_I (\epsilon_h)_t \overline{\zeta_h} + \overline{(\epsilon_h)_t} \zeta_h dx -i \int_I (\epsilon_h) (\overline{\zeta_h})_{xx} dx \\
 &+i \int_I \overline{(\epsilon_h)} ({\zeta_h})_{xx} dx  -i\sumj (\widehat{\epsilon_h} [(\overline{\zeta_h})_x]-\widetilde{(\epsilon_h)}_x[\overline{\zeta_h}])|_\jp  \\
                                                         &+ i\sumj \overline{(\widehat{\epsilon_h} [(\overline{\zeta_h})_x]-\widetilde{(\epsilon_h)}_x[\overline{\zeta_h}])|_\jp}, \\
                                                         & = 2\int_I \mathrm{Re}\big ((\epsilon_h)_t \overline{\zeta_h} \big )dx.
\end{align*}
The last equality holds because of the definition of $\pst u.$
For the left hand side of \eqref{eqn:err}, by similar computation in stability analysis we have
\beq
a_{\ao, -\ao, \bo, \bt}(\zeta_h, \overline{\zeta_h})+\overline{a_{\ao, -\ao, \bo, \bt}(\zeta_h, \overline{\zeta_h})} = \frac{d}{dt} \int_I |\zeta_h|^2 dx.
\eeq

Combine these two equations with \eqref{eqn:nonlinearest}:
\[
\frac{d}{dt} \|\zeta_h\|^2_{L^2(I)} \leq  \| (\epsilon_h)_t\|_{L^2(I)}^2 + \|\zeta_h\|_{L^2(I)}^2 + C_1  ( \|\epsilon_h\|^2_{L^2 (I)} + \|\zeta_h\|^2_{L^2 (I)}).
\]
Assuming $u_t, u $ have sufficient smoothness, then by Gronwall's inequality, we can get:
\[
\|\zeta_h\|_{L^2(I)}^2 \leq C_1 \left (\|\zeta_h|_{t=0}\|^2_{L^2(I)} +  \|(\epsilon_h)_t\|_{L^2(I)}^2 + \|(\epsilon_h)\|_{L^2(I)}^2 \right ),
\]
 and we obtain \eqref{eqn:errscheme}.

To complete the proof, we shall justify the \emph{a priori} assumption. To be more precise, we consider $h_0$, s.t., $\forall h < h_0, C_\star h \leq \frac{1}{2}h^{0.5}$, where $C_\star$ is defined in  \eqref{eqn:errscheme}, dependent on $T_e$, but not on $h$. Suppose $\exists \, t^\ast$ = $\sup\{t: \|u( t^\ast) -u_h(t^\ast)\|_{L^2(I)} \} \leq h^{0.5}$, we would have $\|u(t^\ast) -u_h(t^\ast)\|_{L^2(I)} = h^{0.5}$ by continuity if $t^\ast$ is finite. By  \eqref{eqn:errscheme}, we obtain $\|e\|_{L^2(I)} \leq C_\star h \leq \frac{1}{2}h^{0.5}$ if $t^\ast \leq T_e$, which contradicts the definition of $t^\ast$. Therefore, $t^\ast >T_e$ and the \emph{a priori} assumption is justified.

\end{proof}

\begin{rem}
If $f$ is a constant function, we can prove the same error estimates without using the \emph{a priori} assumption. Therefore, the assumption that $\epsilon_h = u - \pst u$ has at least first order convergence rate in $L^2$ and $L^\infty$ norm is no longer needed.
\end{rem}

\section{Numerical experiments}
\label{sec:numerical}
In this section, we present  numerical experiments to validate our theoretical results. Particularly, in Section \ref{sec:numerical1}, we provide numerical validations of convergence rate for the projection $\pst$ as discussed in Section \ref{sec:projection} with focus on  the dependence of the errors   on parameters $\ao, \bo, \bt$ .  Section \ref{sec:numerical2}   illustrates the energy conservation property  and  validates theoretical convergence rate of DG scheme for NLS equation \eqref{eqn:nls1}.

\subsection{Numerical results of the projection operator $\pst$}
\label{sec:numerical1}

\begin{Example} 
\label{exa:localp}
In this example, we focus on local projection where $\ao^2+\bo\bt=\frac{1}{4}$, and verify the conclusions in Lemma \ref{lem:localp} by considering a smooth test function $u= \cos(x)$ on $[0, 2\pi]$ with a uniform mesh of size $h=2\pi/N$ and $k=1, 2, 3$ for various sets of parameters $(\ao,\bo,\bt).$  
\end{Example}

We first   consider two sets of parameters  $(\ao,\bo,\bt) = (0.3,0.4,0.4)$ and $(\ao,\bo,\bt) = (0.3,0.4/h,0.4h).$
The results with $(\ao,\bo,\bt) = (0.3,0.4,0.4)$  are listed in Table \ref{tab:localp}. By plugging in the parameters into \eqref{eqn:localpest}, we have  that when $k=1$, the projection has suboptimal first order convergence rate, while for $k>1$, optimal $(k+1)$-th order convergence rate should be achieved. Results in Table \ref{tab:localp} agree well with the theoretical prediction. On the other hand, when we choose parameters $(\ao,\bo,\bt) = (0.3,0.4/h,0.4h)$, by Lemma  \ref{lem:localp}, we should observe optimal convergence rate for all $k \ge 1$, and this is verified by the numerical results in Table \ref{tab:localp2}.

Then, we choose the parameters as $(\ao,\bo,\bt) = (0.5,1,0)$ to verify the super-closeness claim
\eqref{eqn:superc1},  i.e., the difference between $\pst$ and $P_h^1$ can have convergence rates higher than $k+1$.  The results are listed in Table \ref{tab:diff}. The difference of the two projections is indeed of $(k+2)$-th order for any $k \ge 1$ in all norms. Finally, we take $(\ao,\bo,\bt) =(0.5, \frac{k^2}{h(1+h)},  0)$. In this case, $\Gamma_j=O(1).$  The numerical results in Table \ref{tab:localplei} verify the order reduction to $k$-th order accuracy for all $k \ge 1$ as predicted by \eqref{eqn:localpest}.

\begin{table}[!h]
	\centering
	\small
	\caption{Example \ref{exa:localp}. Error of local projection $\pst u - u$. Flux parameters: $\alpha_1=0.3,  \beta_1=0.4, \beta_2=0.4.$}
	\label{tab:localp}
	\begin{tabular}{|c|c|c|c|c|c|c|c|}
	\hline
	& N   & $L^1$ error & order&$L^2$ error & order& $L^{\infty}$ error & order \\
	\hline
  \multirow{4}{1em}{$ P^1$} 
 & 160&     0.49E-02&   -&     0.27E-01&  -&     0.16E-01&   -\\
 & 320&     0.25E-02&   0.99&     0.14E-01&   0.99&     0.79E-02&   1.00\\
 & 640&     0.12E-02&   0.99&     0.69E-02&   0.99&     0.39E-02&   1.00\\
& 1280&     0.62E-03&   1.00&     0.35E-02&   1.00&     0.20E-02&   1.00\\  
\hline
 \multirow{4}{1em}{$ P^2$} 
& 160&     0.52E-06&   -&     0.32E-05&   -&     0.26E-05&   -\\
& 320&     0.64E-07&   3.01&     0.39E-06&   3.01&     0.32E-06&   3.02\\
& 640&     0.80E-08&   3.01&     0.49E-07&   3.01&     0.40E-07&   3.01\\
&1280&     0.10E-08&   3.00&     0.61E-08&   3.00&     0.49E-08&   3.01\\   
\hline
 \multirow{4}{1em}{$ P^3$} 
& 160&     0.58E-09&   -&     0.39E-08&   -&     0.33E-08&   -\\
& 320&     0.36E-10&   4.00&     0.24E-09&   4.00&     0.21E-09&   4.01\\
& 640&     0.22E-11&   4.00&     0.15E-10&   4.00&     0.13E-10&   4.00\\
&1280&     0.14E-12&   4.00&     0.94E-12&   4.00&     0.80E-12&   4.00\\ 
	\hline
	\end{tabular}
\end{table}

\begin{table}[!h]
	\centering
	\small
	\caption{Example \ref{exa:localp}. Error of local projection $\pst u - u$. Flux parameters: $\alpha_1=0.3,   \beta_1=0.4/h, \beta_2=0.4h.$}
	\label{tab:localp2}
	\begin{tabular}{|c|c|c|c|c|c|c|c|}
	\hline
	& N   & $L^1$ error & order&$L^2$ error & order& $L^{\infty}$ error & order \\
	\hline
  \multirow{4}{1em}{$ P^1$} 
& 160&     0.82E-04&   -&     0.61E-03&   -&     0.74E-03&   -\\
& 320&     0.20E-04&   2.00&     0.15E-03&   2.00&     0.19E-03&   2.00\\
& 640&     0.51E-05&   2.00&     0.38E-04&   2.00&     0.46E-04&   2.00\\
&1280&     0.13E-05&   2.00&     0.95E-05&   2.00&     0.12E-04&   2.00\\ 
\hline
 \multirow{4}{1em}{$ P^2$} 
& 160&     0.14E-05&   -&     0.88E-05&   -&     0.89E-05&   -\\
& 320&     0.17E-06&   3.00&     0.11E-05&   3.00&     0.11E-05&   3.00\\
& 640&     0.22E-07&   3.00&     0.14E-06&   3.00&     0.14E-06&   3.00\\
&1280&     0.27E-08&   3.00&     0.17E-07&   3.00&     0.17E-07&   3.00\\
\hline
 \multirow{4}{1em}{$ P^3$} 
& 160&     0.68E-09&   -&     0.45E-08&   -&     0.43E-08&   -\\
& 320&     0.43E-10&   4.00&     0.28E-09&   4.00&     0.27E-09&   4.00\\
& 640&     0.27E-11&   4.00&     0.18E-10&   4.00&     0.17E-10&   4.00\\
&1280&     0.17E-12&   4.00&     0.11E-11&   4.00&     0.11E-11&   4.00\\
	\hline
	\end{tabular}
\end{table}

\begin{table}[!h]
	\centering
	\small
	\caption{Example \ref{exa:localp}. Difference of local projection $\pst$ with $\ppm$: $\pst u-\ppm u$.  Flux parameters: $\alpha_1=0.5,   \beta_1=1, \beta_2=0.$}
	\label{tab:diff}
	\begin{tabular}{|c|c|c|c|c|c|c|c|}
	\hline
	& N   & $L^1$ error & order&$L^2$ error & order& $L^{\infty}$ error & order \\
	\hline
  \multirow{4}{1em}{$ P^1$} 
& 160&     0.50E-05&   -&     0.32E-04&   -&     0.31E-04&   -\\
& 320&     0.61E-06&   3.03&     0.40E-05&   3.03&     0.38E-05&   3.03\\
& 640&     0.76E-07&   3.01&     0.49E-06&   3.01&     0.47E-06&   3.01\\
&1280&     0.95E-08&   3.01&     0.61E-07&   3.01&     0.58E-07&   3.01\\
\hline
 \multirow{4}{1em}{$ P^2$} 
& 160&     0.12E-08&   -&     0.81E-08&   -&     0.12E-07&   -\\
& 320&     0.75E-10&   4.01&     0.50E-09&   4.01&     0.72E-09&   4.01\\
& 640&     0.46E-11&   4.00&     0.31E-10&   4.00&     0.45E-10&   4.00\\
&1280&     0.29E-12&   4.00&     0.20E-11&   4.00&     0.28E-11&   4.00\\
\hline
 \multirow{4}{1em}{$ P^3$} 
& 160&     0.75E-12&   -&     0.50E-11&   -&     0.80E-11&   -\\
& 320&     0.23E-13&   5.00&     0.16E-12&   5.00&     0.25E-12&   5.00\\
& 640&     0.73E-15&   5.00&     0.49E-14&   5.00&     0.78E-14&   5.00\\
&1280&     0.23E-16&   5.00&     0.15E-15&   5.00&     0.24E-15&   5.00\\
	\hline
	\end{tabular}
\end{table}

\begin{table}[!h]
	\centering
	\small
	\caption{Example \ref{exa:localp}. Error of local projection $\pst u - u$. Flux parameters: $\alpha_1=0.5,  \beta_1=\frac{k^2}{h(1+h)}, \beta_2=0$.   }
	\label{tab:localplei}
	\begin{tabular}{|c|c|c|c|c|c|c|c|}
	\hline
	& N   & $L^1$ error & order&$L^2$ error & order& $L^{\infty}$ error & order \\
	\hline
  \multirow{4}{1em}{$ P^1$} 
& 160&     0.33E-02&   -&     0.21E-01&   -&     0.20E-01&   -\\
& 320&     0.16E-02&   1.04&     0.10E-01&   1.03&     0.98E-02&   1.03\\
& 640&     0.79E-03&   1.02&     0.51E-02&   1.02&     0.49E-02&   1.01\\
&1280&     0.39E-03&   1.01&     0.25E-02&   1.01&     0.24E-02&   1.01\\
\hline
 \multirow{4}{1em}{$ P^2$} 
& 160&     0.33E-05&   -&     0.22E-04&   -&     0.31E-04&   -\\
& 320&     0.79E-06&   2.04&     0.54E-05&   2.04&     0.76E-05&   2.03\\
& 640&     0.20E-06&   2.02&     0.13E-05&   2.02&     0.19E-05&   2.02\\
&1280&     0.49E-07&   2.01&     0.33E-06&   2.01&     0.47E-06&   2.01\\
\hline
 \multirow{4}{1em}{$ P^3$} 
& 160&     0.47E-08&   -&     0.31E-07&   -&     0.49E-07&   -\\
& 320&     0.57E-09&   3.06&     0.38E-08&   3.05&     0.59E-08&   3.03\\
& 640&     0.69E-10&   3.03&     0.46E-09&   3.02&     0.73E-09&   3.02\\
&1280&     0.86E-11&   3.01&     0.57E-10&   3.01&     0.91E-10&   3.01\\
	\hline
	\end{tabular}
\end{table}

\begin{Example}
\label{exa:globalp1}
In this example, we consider global projection when the parameter choices belong to Case 1.  We consider a smooth test function $u = e^{\cos(x)}$ on $[0,2\pi]$ with a uniform mesh of size $h = 2\pi/N$ and $k=1,2,3$ for various sets of parameters $(\ao,\bo,\bt).$
\end{Example}

We first test the situation when $\lim_{h\to0} |\lo,\lt| \neq 1$ by setting the parameters $(\ao,\tilde \bo,\tilde \bt) = (0.25,1,1),A_1 =-0.5,A_2 =2.$  Another example is $(\ao,\bo,\bt) = (0, \frac{1}{2h}, h)$, for which the eigenvalues $\lo,\lt$ are constant dependent on $k$ but not $h$. These two parameter choices belong to Case 1.1 and Case 1.5, respectively. The numerical results shown in Tables \ref{tab:case14} and \ref{tab:case15} verify the optimal $(k+1)$-th order convergence rate predicted by Lemma \ref{lem:globalpcase1}.

Then we test the situation when $\lim_{h\to0} |\lo,\lt| =1$ by using two sets of parameters $(\ao,\tilde \bo,\tilde \bt) = (0.25,\frac{k(k-1)}{2} +  \frac{k(k+1)}{8},1), A_1=-1, A_2=2,3,$ and $(\ao,\tilde \bo,\tilde \bt) = (0.25,\frac{2}{k(k-1)},1), A_1=-2,-3, A_2=1$. The first set of parameters belongs to  Case 1.6.1 and we can verify that $\lim_{h\to0} \lo, \lt = (-1)^k.$ Lemma \ref{lem:globalpcase1} and Algorithm \ref{alg:case1} imply $(k+2-A_2)$-th convergence order. The numerical results listed in Table \ref{tab:case11}  show that the expected order reduction only happens when $\lim_{h\to0} \lo,\lt =1$, but not for $\lim_{h\to0} \lo,\lt =-1.$ The second set of parameters belongs to  Case 1.7.2 and   we can verify that $\lim_{h\to0} \lo, \lt = (-1)^{k+1}.$ Lemma \ref{lem:globalpcase1} and Algorithm \ref{alg:case1} imply  $(k+2+A_1)$-th convergence order. The numerical results listed in Table  \ref{tab:case12} also show that order reduction is only observed when $\lim_{h\to0} \lo,\lt =1$.

Lastly, we test  $(\ao,\tilde \bo,\tilde \bt) =(0.25, -1, \frac{1}{12})$ with $k=2,A_1=-2,-3,A_2=1,$ where our theoretical results predict accuracy order of $(k+2+A_1)$, but   numerical experiments show the order to be $(k+3+A_1)$  in Table \ref{tab:case13}. This is one of the exceptions that Lemma \ref{lem:globalpcase1} is not sharp and has been commented in Remark \ref{rem:case12}. 

\begin{table}[!h]
	\centering
	\small
	\caption{Example \ref{exa:globalp1}. Error of global projection $\pst u - u$. Flux parameters (Case 1.1): $\alpha_1=0.25,  \tilde\beta_1=1 , \tilde\beta_2=1$, $A_1=-0.5, A_2 =2$.}
	\label{tab:case14}
	\begin{tabular}{|c|c|c|c|c|c|c|cH|}
	\hline
	& N   & $L^1$ error & order&$L^2$ error & order& $L^{\infty}$ error & order &\\
	\hline
  \multirow{4}{1em}{$ P^1$} 
& 160&     0.10E-03&  -&     0.69E-03&  -&     0.89E-03&  -&   -1.2308\\
& 320&     0.26E-04&   1.93&     0.18E-03&   1.93&     0.23E-03&   1.94&   -1.3271\\
& 640&     0.67E-05&   1.98&     0.46E-04&   1.97&     0.58E-04&   1.98&   -1.4150\\
&1280&     0.17E-05&   1.99&     0.12E-04&   1.99&     0.15E-04&   2.00&   -1.4844\\
\hline
 \multirow{4}{1em}{$ P^2$} 
& 160&     0.63E-06&  -&     0.52E-05&  -&     0.87E-05&  -&   -2.6331\\
& 320&     0.88E-07&   2.85&     0.71E-06&   2.88&     0.11E-05&   2.95&   -2.9043\\
& 640&     0.11E-07&   2.95&     0.91E-07&   2.97&     0.14E-06&   3.00&   -3.0704\\
&1280&     0.14E-08&   2.99&     0.11E-07&   2.99&     0.17E-07&   3.01&   -3.1709\\
\hline
 \multirow{4}{1em}{$ P^3$} 
& 320&     0.64E-10&   -&     0.49E-09&   -&     0.72E-09&   -&   -\\
& 640&     0.45E-11&   3.82&     0.35E-10&   3.80&     0.52E-10&   3.79&   -4.3216\\
&1280&     0.29E-12&   3.93&     0.23E-11&   3.91&     0.34E-11&   3.92&   -4.6376\\
&2560&     0.19E-13&   3.97&     0.15E-12&   3.96&     0.22E-12&   3.96&   -4.8039\\
	\hline
	\end{tabular}
\end{table}

	\begin{table}[!h]
	\centering
	\small
	\caption{Example \ref{exa:globalp1}. Error of global projection $\pst u - u.$ Flux parameters (Case 1.5): $\alpha_1=0, \beta_1=\frac{1}{2h}$, $ \beta_2= h$.}%
	\label{tab:case15}
	\begin{tabular}{|c|c|c|c|c|c|c|cH|}
	\hline
	& N   & $L^1$ error & order&$L^2$ error & order& $L^{\infty}$ error & order & $\frac{\g}{\la} $ \\
\hline
 \multirow{4}{1em}{$ P^1$} 
& 320&     0.11E-03&   -&     0.63E-03&   -&     0.38E-03&   -&    2.0000\\
& 640&     0.28E-04&   2.00&     0.16E-03&   2.00&     0.95E-04&   2.00&    2.0000\\
&1280&     0.70E-05&   2.00&     0.39E-04&   2.00&     0.24E-04&   2.00&    2.0000\\
&2560&     0.18E-05&   2.00&     0.98E-05&   2.00&     0.60E-05&   2.00&    2.0000\\
\hline
 \multirow{4}{1em}{$ P^2$} 
& 320&     0.11E-06&   -&     0.71E-06&   -&     0.62E-06&   -&   -6.5000\\
& 640&     0.14E-07&   3.00&     0.89E-07&   3.00&     0.77E-07&   3.00&   -6.5000\\
&1280&     0.18E-08&   3.00&     0.11E-07&   3.00&     0.96E-08&   3.00&   -6.5000\\
&2560&     0.22E-09&   3.00&     0.14E-08&   3.00&     0.12E-08&   3.00&   -6.5000\\
	\hline
\multirow{4}{1em}{$ P^3$} 
& 320&     0.38E-10&   -&     0.25E-09&   -&     0.22E-09&   -&  -39.3333\\
& 640&     0.24E-11&   4.00&     0.16E-10&   4.00&     0.14E-10&   4.00&  -39.3333\\
&1280&     0.15E-12&   4.00&     0.99E-12&   4.00&     0.86E-12&   4.00&  -39.3333\\
&2560&     0.92E-14&   4.00&     0.62E-13&   4.00&     0.54E-13&   3.99&  -39.3333\\
	\hline
	\end{tabular}
\end{table}

	\begin{table}[!h]
	\centering
	\small
	\caption{Example \ref{exa:globalp1}. Error of global projection $\pst u - u.$ Flux parameters (Case 1.6.1): $\alpha_1=0.25,  \tilde \beta_1=\frac{k(k-1)}{2} + \frac{k(k+1)}{8}, \tilde \beta_2=1.0, A_1 = -1, A_2 = 2,3$. Note here $\lim_{h\to0} \lo, \lt = (-1)^k.$}
	\label{tab:case11}
	\begin{tabular}{|c|c|c|c|c|c|c|cH|}
	\hline
	& N   & $L^1$ error & order&$L^2$ error & order& $L^{\infty}$ error & order & $\frac{\g}{\la} $ \\
	\hline
  \multirow{4}{5em}{$ P^1$ \\ $A_2 = 2$ \\  $ \tilde \bo = \frac{1}{4}$}
& 640&     0.75E-05&   -&     0.52E-04&   -&     0.66E-04&   -&   -1.0265\\
&1280&     0.19E-05&   1.97&     0.13E-04&   1.97&     0.17E-04&   1.97&   -1.0132\\
&2560&     0.48E-06&   1.99&     0.34E-05&   1.98&     0.42E-05&   1.99&   -1.0066\\
&5120&     0.12E-06&   1.99&     0.84E-06&   1.99&     0.11E-05&   1.99&   -1.0033\\\hline
 \multirow{4}{5em}{$ P^2$ \\ $A_2 = 2$ \\  $\tilde \bo = \frac{7}{4}$}
& 640&     0.15E-06&   -	&     0.12E-05&   	&     0.23E-05&   -&   -1.1297\\
&1280&     0.39E-07&   1.94&     0.32E-06&   1.93&     0.61E-06&   1.94&   -1.0617\\
&2560&     0.98E-08&   1.97&     0.82E-07&   1.97&     0.16E-06&   1.97&   -1.0301\\
&5120&     0.25E-08&   1.98&     0.21E-07&   1.98&     0.39E-07&   1.99&   -1.0149\\
\hline
 \multirow{4}{5em}{$ P^2$ \\ $A_2 = 3$ \\ $\tilde \bo = \frac{7}{4}$}
& 640&     0.14E-04&  	 -&     0.12E-03&   -	&     0.21E-03&   	-&   -1.0012\\
&1280&     0.71E-05&   1.00&     0.58E-04&   1.00&     0.11E-03&   1.00&   -1.0003\\
&2560&     0.35E-05&   1.00&     0.29E-04&   1.00&     0.54E-04&   1.00&   -1.0001\\
&5120&     0.18E-05&   1.00&     0.15E-04&   1.00&     0.27E-04&   1.00&   -1.0000\\
\hline
 \multirow{4}{5em}{$ P^3$ \\ $A_2 = 2$ \\  $\tilde \bo = \frac{9}{2}$}
& 320&     0.12E-09&  -&     0.95E-09&   -&     0.20E-08&   -&   -2.1883\\
& 640&     0.78E-11&   3.99&     0.60E-10&   3.99&     0.13E-09&   3.99&   -1.4110\\
&1280&     0.49E-12&   3.99&     0.38E-11&   3.99&     0.80E-11&   3.99&   -1.1781\\
&2560&     0.31E-13&   4.00&     0.24E-12&   3.99&     0.51E-12&   3.97&   -1.0835\\
	\hline
	\end{tabular}
\end{table}

	\begin{table}[!h]
	\centering
	\small
	\caption{Example \ref{exa:globalp1}. Error of global projection $\pst u - u.$ Flux parameters (Case 1.7.2): $\alpha_1=0.25,  \tilde \beta_1=1, \tilde \beta_2=\frac{1}{2k(k-1)}, A_1 = -2,-3, A_2 = 1$. Note here $\lim_{h\to0} \lo, \lt = (-1)^{k+1}.$} %
	\label{tab:case12}
	\begin{tabular}{|c|c|c|c|c|c|c|cH|}
	\hline
	& N   & $L^1$ error & order&$L^2$ error & order& $L^{\infty}$ error & order & $\frac{\g}{\la} $ \\
\hline
 \multirow{4}{4em}{$ P^2$ \\ $A_1 = -3$ \\ $\tilde \bt = \frac{1}{4}$ } 
& 320&     0.28E-07&   -	&     0.21E-06&   -	&     0.24E-06&   	-&    1.0001\\
& 640&     0.35E-08&   3.00&     0.27E-07&   3.00&     0.31E-07&   3.00&    1.0000\\
&1280&     0.44E-09&   3.00&     0.33E-08&   3.00&     0.38E-08&   3.00&    1.0000\\
&2560&     0.55E-10&   3.00&     0.41E-09&   3.00&     0.48E-09&   3.00&    1.0000\\
\hline
 \multirow{4}{4em}{$ P^3$\\ $A_1 = -2$ \\ $\tilde \bt = \frac{1}{12}$ } 
& 320&     0.70E-08&   	-&     0.57E-07&   	-&     0.12E-06&   	-&    1.0308\\
& 640&     0.94E-09&   2.90&     0.77E-08&   2.90&     0.16E-07&   2.91&    1.0151\\
&1280&     0.12E-09&   2.95&     0.99E-09&   2.95&     0.20E-08&   2.95&    1.0074\\
&2560&     0.15E-10&   2.98&     0.13E-09&   2.98&     0.26E-09&   2.98&    1.0037\\
	\hline
\multirow{4}{4em}{$ P^3$\\ $A_1 = -3$ \\ $\tilde \bt = \frac{1}{12}$ } 
& 320&     0.16E-06&   -	&     0.13E-05&   	-&     0.24E-05&   	-&    1.0006\\
& 640&     0.40E-07&   2.00&     0.32E-06&   2.00&     0.61E-06&   2.00&    1.0001\\
&1280&     0.10E-07&   2.00&     0.79E-07&   2.00&     0.15E-06&   2.00&    1.0000\\
&2560&     0.25E-08&   2.00&     0.20E-07&   2.00&     0.38E-07&   2.00&    1.0000\\
	\hline
	\end{tabular}
\end{table}

\begin{table}[!h]
	\centering
	\caption{Example \ref{exa:globalp1}. Error of global projection $\pst u - u.$ Flux parameters (Case 1.7.2): $\alpha_1=0.25,  \tilde \beta_1=-1, \tilde \beta_2=\frac{1}{2k(k+1)}, A_1 = -2,-3, A_2 = 1$. Note that $\lim_{h\to0} \lo, \lt = (-1)^{k}=1.$} %
	\label{tab:case13}
	\begin{tabular}{|c|c|c|c|c|c|c|cH|}
	\hline
	& N   & $L^1$ error & order&$L^2$ error & order& $L^{\infty}$ error & order & $\frac{\g}{\la} $ \\
\hline
 \multirow{4}{4em}{  $ P^2$ \\ $A_1 = -2$ \\ $\tilde \bt = \frac{1}{12}$ } 
& 320&     0.72E-07&   2.99&     0.56E-06&   2.98&     0.94E-06&   2.97&   -1.0423\\
& 640&     0.90E-08&   2.99&     0.71E-07&   2.99&     0.12E-06&   2.99&   -1.0216\\
&1280&     0.11E-08&   3.00&     0.89E-08&   3.00&     0.15E-07&   2.99&   -1.0109\\
&2560&     0.14E-09&   3.00&     0.11E-08&   3.00&     0.19E-08&   3.00&   -1.0055\\
\hline
 \multirow{4}{4em}{ $ P^2$ \\ $A_1 = -3$ \\ $\tilde \bt = \frac{1}{12}$ } 
& 320&     0.80E-06&   2.01&     0.63E-05&   2.01&     0.12E-04&   2.01&   -1.0009\\
& 640&     0.20E-06&   2.00&     0.16E-05&   2.00&     0.30E-05&   2.00&   -1.0002\\
&1280&     0.50E-07&   2.00&     0.39E-06&   2.00&     0.75E-06&   2.00&   -1.0001\\
&2560&     0.13E-07&   2.00&     0.98E-07&   2.00&     0.19E-06&   2.00&   -1.0000\\
	\hline
	\end{tabular}
\end{table}

\begin{Example}
\label{exa:globalp2}
In this example, we consider global projection when the parameter choices are central-like fluxes belonging to Cases 1 and 2, for  smooth function  $u= e^{\cos(x)}$ on $[0, 2\pi]$ with a uniform mesh of size $h=2\pi/N$ and $k=1,2,3.$
\end{Example}

For central flux $(\ao,\bo,\bt) = (0,0,0),$ $\Gamma=-\frac{k^2}{2h},\Lambda=\frac{k}{2h}.$   If $k>1,$ $\frac{|\Gamma|}{|\Lambda|}=k>1,$ it belongs to Case 1, and if $k=1$, $\Gamma=-\Lambda$ and it belongs to Case 2. We conclude that $\pst$ exists and is unique for $k=1$ when $N$ is odd and $k >1$ for arbitrary $N.$ $\pst$ has optimal error estimates as proved in Lemmas \ref{lem:globalpcase1} and \ref{lem:globalpcase2optimal}. Our numerical test in Table \ref{tab:central} demonstrates optimal convergence rate for all $k.$

A similar flux is $(\ao,\bo,\bt) = (0,0,1).$ Lemma \ref{lem:globalpcase2optimal} yields first order convergence rate when $k=1$ as discussed in Remark \ref{rem:case2}. When $k=2,3$, similar to central flux, this parameter choice belongs to Case 1, showing optimal convergence rate. The numerical test in Table \ref{tab:central2} verifies the theoretical results.

\begin{table}[!h]
	\centering
	\small
	\caption{Example \ref{exa:globalp2}. Error of global projection $\pst u - u$. (Central flux) Flux parameters: $\alpha_1=0,  \beta_1=0 , \beta_2=0.$}
	\label{tab:central}
	\begin{tabular}{|c|c|c|c|c|c|c|cH|}
	\hline
	& N   & $L^1$ error & order&$L^2$ error & order& $L^{\infty}$ error & order &\\
	\hline
  \multirow{4}{1em}{$ P^1$} 
&  93&     0.12E-03&   -&     0.74E-03&   -&     0.55E-03&   -&   -1.0000\\
& 279&     0.13E-04&   2.00&     0.82E-04&   2.00&     0.61E-04&   2.00&   -1.0000\\
& 837&     0.15E-05&   2.00&     0.91E-05&   2.00&     0.68E-05&   2.00&   -1.0000\\
&2511&     0.17E-06&   2.00&     0.10E-05&   2.00&     0.76E-06&   2.00&   -1.0000\\
\hline
 \multirow{4}{1em}{$ P^2$} 
& 160&     0.11E-05&   -&     0.85E-05&   -&     0.10E-04&   -&   -2.0000\\
& 320&     0.14E-06&   3.00&     0.11E-05&   3.00&     0.13E-05&   2.99&   -2.0000\\
& 640&     0.17E-07&   3.00&     0.13E-06&   3.00&     0.16E-06&   3.00&   -2.0000\\
&1280&     0.22E-08&   3.00&     0.17E-07&   3.00&     0.20E-07&   3.00&   -2.0000\\
\hline
 \multirow{4}{1em}{$ P^3$} 
& 160&     0.11E-08&   -&     0.83E-08&   -&     0.11E-07&   -&   -3.0000\\
& 320&     0.68E-10&   4.00&     0.52E-09&   4.00&     0.68E-09&   4.00&   -3.0000\\
& 640&     0.42E-11&   4.00&     0.32E-10&   4.00&     0.42E-10&   4.00&   -3.0000\\
&1280&     0.27E-12&   4.00&     0.20E-11&   4.00&     0.26E-11&   4.00&   -3.0000\\
	\hline
	\end{tabular}
\end{table}

\begin{table}[!h]
	\centering
	\small
	\caption{Example \ref{exa:globalp2}. Error of global projection $\pst u - u$.   Flux parameters: $\alpha_1=0,  \beta_1=0 , \beta_2=1.$  }
	\label{tab:central2}
	\begin{tabular}{|c|c|c|c|c|c|c|cH|}
	\hline
	& N   & $L^1$ error & order&$L^2$ error & order& $L^{\infty}$ error & order &\\
	\hline
  \multirow{4}{1em}{$ P^1$} 
&  93&     0.21E-01&   -&     0.12E+00&   -&     0.68E-01&   -&   -1.0000\\
& 279&     0.72E-02&   1.00&     0.40E-01&   1.00&     0.23E-01&   1.00&   -1.0000\\
& 837&     0.24E-02&   1.00&     0.13E-01&   1.00&     0.75E-02&   1.00&   -1.0000\\
&2511&     0.80E-03&   1.00&     0.44E-02&   1.00&     0.25E-02&   1.00&   -1.0000\\
\hline
 \multirow{4}{1em}{$ P^2$} 
& 160&     0.11E-05&  -&     0.86E-05&  -&     0.10E-04&  -&  303.5775\\
& 320&     0.14E-06&   3.00&     0.11E-05&   3.00&     0.13E-05&   3.00&  609.1550\\
& 640&     0.17E-07&   3.00&     0.13E-06&   3.00&     0.16E-06&   3.00& 1220.3100\\
&1280&     0.22E-08&   3.00&     0.17E-07&   3.00&     0.20E-07&   3.00& 2442.6199\\
&2560&     0.27E-09&   3.00&     0.21E-08&   3.00&     0.25E-08&   3.00& 4887.2399\\
\hline
 \multirow{4}{1em}{$ P^3$} 
& 160&     0.27E-08&  -&     0.23E-07&  -&     0.36E-07&  -& 1219.3100\\
& 320&     0.17E-09&   4.00&     0.14E-08&   4.00&     0.22E-08&   4.00& 2441.6199\\
& 640&     0.11E-10&   4.00&     0.89E-10&   4.00&     0.14E-09&   4.00& 4886.2399\\
&1280&     0.66E-12&   4.00&     0.55E-11&   4.00&     0.87E-11&   4.00& 9775.4797\\
	\hline
	\end{tabular}
\end{table}

\begin{Example}
\label{exa:globalp3}
In this example, we consider global projection when the parameter choices belong to Case 3 for the smooth function $u=e^{\cos(x)}$ on $[0,2\pi]$ with uniform mesh size $h=2\pi/N$ and $k=1,2,3$. 
\end{Example}
 An example of Case 3 is shown in Table \ref{tab:case3}, where the parameters  are $(\ao,\tilde \bo, \tilde \bt) =( 0.25,-1, \frac{1}{2k(k-1)})$, $A_1 = -2,-3, A_2 = 1$, similar to the parameters in Table \ref{tab:case12}. The asymptotic behavior of $\lo, \lt$ when $h$ approaches $0$ is indeed similar to Table \ref{tab:case12}, that is, $\abs{\lo,\lt} = 1 + O({h^{-(A_1+1)/2}})$ and $\lim_{h\to 0} \lo, \lt =(-1)^{k+1}$. Same as previous examples, order reductions are only observed when $\lim_{h \to 0} \lo, \lt=1$, that is for $k=3$.
 
 We use this example to compare the error bounds obtained in   Lemmas \ref{lem:globalpcase3} and \ref{lem:globalpcase3optimal}. When $A_1 = -2$, $\delta = -(A_1+1) = 1$, we can verify $\abs{1-\lo^N} \sim O(1)$, i.e., $\delta' = 0$, thus by Lemma \ref{lem:globalpcase3optimal}, the convergence rate of $\pst$ is $k$, which agrees with the simulation and is better than the one in Lemma \ref{lem:globalpcase3} by half order.  When $A_1 = -3$, $\delta = -(A_1+1) = 2, \delta' = 0$, Lemma \ref{lem:globalpcase3} and Lemma \ref{lem:globalpcase3optimal} both show a convergence rate of $k-1$. These estimations are confirmed by the numerical results in Table \ref{tab:case3} when $k = 3$. 
 
We performed more numerical results of Case 3, and all are similar to those of Case 1 as long as the eigenvalues $\lo, \lt$ are approaching 1 at the same rate. Hence, we will not show more examples about Case 3. %

	\begin{table}[!h]
	\centering
	\small
	\caption{Example \ref{exa:globalp3}. Error of global projection $\pst u - u.$ Flux parameters (Case 3, and similar to Case 1.7.2 in Table \ref{tab:case12}): $\alpha_1=0.25,  \tilde \beta_1=-1$, $\tilde \beta_2=\frac{1}{2k(k-1)}, A_1 = -2,-3, A_2 = 1$. Note here $\lim_{h\to0} \lo, \lt = (-1)^{k+1}.$} %
	\label{tab:case3}
	\begin{tabular}{|c|c|c|c|c|c|c|cH|}
	\hline
	& N   & $L^1$ error & order&$L^2$ error & order& $L^{\infty}$ error & order & $\frac{\g}{\la} $ \\
\hline
 \multirow{4}{4em}{$ P^2$ \\ $A_1 = -3$ \\ $\tilde \bt = \frac{1}{4}$ } 
& 320&     0.28E-07&   -	&     0.21E-06&   -	&     0.24E-06&   	-&    1.0001\\
& 640&     0.35E-08&   3.00&     0.27E-07&   3.00&     0.31E-07&   3.00&    1.0000\\
&1280&     0.44E-09&   3.00&     0.33E-08&   3.00&     0.38E-08&   3.00&    1.0000\\
&2560&     0.55E-10&   3.00&     0.41E-09&   3.00&     0.48E-09&   3.00&    1.0000\\
\hline
 \multirow{4}{4em}{$ P^3$\\ $A_1 = -2$ \\ $\tilde \bt = \frac{1}{12}$ } 
& 320&     0.70E-08&   	-&     0.57E-07&   	-&     0.12E-06&   	-&    1.0308\\
& 640&     0.94E-09&   2.90&     0.77E-08&   2.90&     0.16E-07&   2.91&    1.0151\\
&1280&     0.12E-09&   2.95&     0.99E-09&   2.95&     0.20E-08&   2.95&    1.0074\\
&2560&     0.15E-10&   2.98&     0.13E-09&   2.98&     0.26E-09&   2.98&    1.0037\\
	\hline
\multirow{4}{4em}{$ P^3$\\ $A_1 = -3$ \\ $\tilde \bt = \frac{1}{12}$ } 
& 320&     0.16E-06&   -	&     0.13E-05&   	-&     0.24E-05&   	-&    1.0006\\
& 640&     0.40E-07&   2.00&     0.32E-06&   2.00&     0.61E-06&   2.00&    1.0001\\
&1280&     0.10E-07&   2.00&     0.79E-07&   2.00&     0.15E-06&   2.00&    1.0000\\
&2560&     0.25E-08&   2.00&     0.20E-07&   2.00&     0.38E-07&   2.00&    1.0000\\
	\hline
	\end{tabular}
\end{table}

\subsection{Numerical results of the DG scheme}
\label{sec:numerical2}

In this subsection, we show the numerical results of the DG scheme applied to the NLS equation.
For the time discretization, we use third order IMEX Runge-Kutta method \cite{ascher1997implicit} and fix $\Delta t= 1/10000,$ which is  small enough to guarantee that the spatial errors dominate. To be more precise, we treat the DG discretization of linear term $u_{xx}$ implicitly and nonlinear term $f(|u|^2)u$ explicitly.

\begin{Example} 
\label{exa:energy}
In this example, we verify the energy conservation property of our scheme by considering the following linear equation
$$
iu_t + u_{xx} = 0,
$$
with the progressive plane wave solution: $u(x,t) = Aexp(i(x-t)),$ with $A=1$.
\end{Example}

We use   $L^2$ projection as the numerical initial condition. In the discussion of stability condition, we derive that when $\mathrm{Im} \bt \geq 0, \mathrm{Im} \bt \leq 0, |\ao+\overline{\at}|^2 \leq -4 \mathrm{Im} \bt \mathrm{Im} \bt$, our scheme for \sch equation is stable. Furthermore, when $\ao + \at = 0, \bo,\bt$ are real numbers, the scheme is energy conservative. In this example, we compare two different parameter choices to verify the energy conservation property. The parameter choices are $(\ao, \at, \bo, \bt) = (0.25, -0.25, 1-i, 1+i)$, and $(\ao, \at, \bo, \bt) = (0.25, -0.25, 1,1)$  when $k=2, N = 40$, ending time $T=100$. Both are numerically stable flux parameters. For the first set of parameters, we expect energy decay due to the contributions from the imaginary part of $\bo, \bt$ as in \eqref{eqn:stab3}. For the second set of parameter, energy should be conserved.

In Fig. \ref{fig:tdiff}, we verify that as $t$ increases from $0$ to $100$, the flux with only real parameters preserve $\|u_h\|_{L^2(I)}$, while the flux with complex numbers have much larger errors. More precisely, for real parameters, $\|u_h(0,\cdot)\|_{L^2(I)} - \|u_h(100,\cdot)\|_{L^2(I)} = 7.9E$-09, for complex parameters, $\|u_h(0,\cdot)\|_{L^2(I)} -\| u_h(100,\cdot)\|_{L^2(I)} = 5.7E$-04.  

\begin{figure}[!ht]
	\centering
	\caption{ Example \ref{exa:energy}. Absolute difference of $\|u_h(t, \cdot)\|_{L^2(I)}$ with $\|u_h(0, \cdot)\|_{L^2(I)}$ with two sets of parameters $(\ao, \at, \bo, \bt) = (0.25, -0.25, 1-i, 1+i)$ (denoted by ``imag") and $(\ao, \at, \bo, \bt) = (0.25, -0.25, 1,1)$ (denoted by ``real")   when $k=2, N = 40$, ending time $T_e=100$. }
	\label{fig:tdiff}
	\includegraphics[width = 0.7 \textwidth]{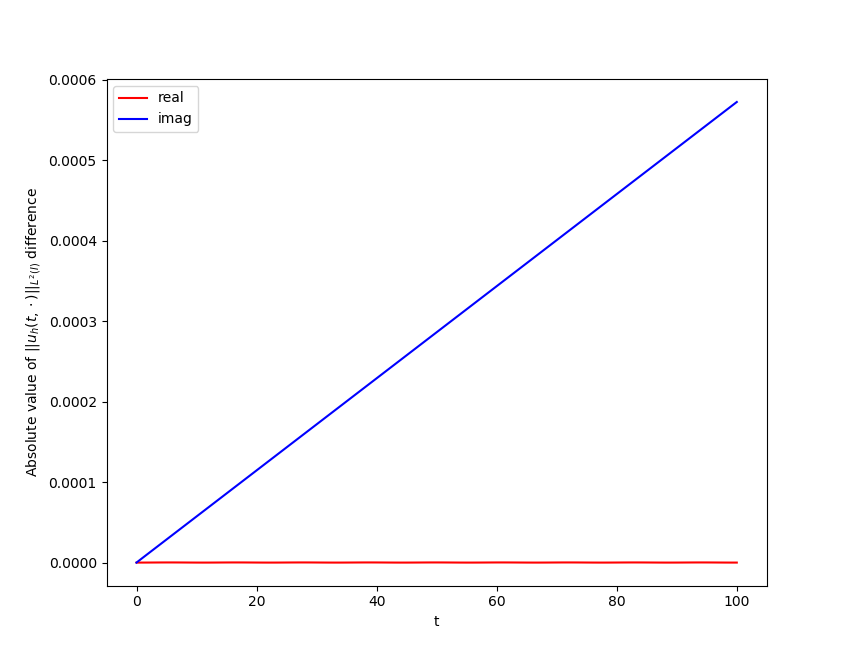}
\end{figure}

\begin{Example}
\label{exa:numerical1}
Accuracy test for NLS equation
\begin{equation}
\label{eqn:nlswave}
iu_t+u_{xx}+ |u|^2u+ |u|^4u=0,
\end{equation}
which admits a progressive plane wave solution: $u(x,t)=A\textrm{exp}(i(cx-\omega t)),$ where $\omega=c^2
-|A|^2 - |A|^4$ with $c=1, A=1$.  
\end{Example}
  For numerical initial condition, $\pst$ is used when applicable, otherwise standard $L^2$ projection is applied. We use six sets of parameters. The numerical errors and orders are shown in Tables \ref{tab:numcentral}, \ref{tab:numcentral2}, \ref{tab:numlocal1}, \ref{tab:numlocal2}, \ref{tab:numglobal} and \ref{tab:numglobal2}, where corresponding projection results are listed in Tables \ref{tab:central}, \ref{tab:central2}, \ref{tab:localp}, \ref{tab:localp2} , \ref{tab:case11} and \ref{tab:case13} respectively. Our numerical experiments show that the order of convergence for the scheme is the same as the order of error estimates for the projection $\pst$. 
  
  We would like to make some additional comments on Tables \ref{tab:numcentral} and \ref{tab:numcentral2}, whose parameter choices belong to Case 2 when $k=1$. The existence of $\pst$ requires $N$ to be odd for this case. However, this assumption is not needed for the optimal convergence rate of the numerical scheme for \eqref{eqn:nlswave} as shown in Tables \ref{tab:numcentral} and \ref{tab:numcentral2}. Similar comments have been made in \cite{XingKdvDG}.

\begin{table}[!h]
	\centering
	\small
	\caption{Example \ref{exa:numerical1}. Error in $L^1$, $L^2$ and $L^\infty$ norm for solving NLS equation \eqref{eqn:nlswave} using central flux (corresponding to Case 2 in Table \ref{tab:central}) $\alpha_1 = \beta_1 = \beta_2 = 0$, ending time $T_e=1$.} %
	\label{tab:numcentral}
	\begin{tabular}{|c|c|c|c|c|c|c|c|}
	\hline
	& N   & $L^1$ error & order&$L^2$ error & order& $L^{\infty}$ error & order \\
\hline
 \multirow{5}{1em}{$ P^1$} 
& 40&     0.28E-02&   -&     0.22E-02&   -&     0.27E-02&   -\\
& 80&     0.71E-03&   2.00&     0.56E-03&   2.00&     0.67E-03&   2.02\\
& 160&     0.18E-03&   2.00&     0.14E-03&   2.00&     0.17E-03&   2.01\\
& 320&     0.45E-04&   2.00&     0.35E-04&   2.00&     0.41E-04&   2.00\\
& 640&     0.11E-04&   2.00&     0.88E-05&   2.00&     0.10E-04&   2.00\\
\hline
 \multirow{5}{1em}{$ P^2$} 
& 40&     0.13E-03&  -&     0.11E-03&  -&     0.16E-03&  -\\
&  80&     0.16E-04&   2.99&     0.14E-04&   2.99&     0.20E-04&   3.00\\
& 160&     0.21E-05&   3.00&     0.18E-05&   3.00&     0.25E-05&   3.01\\
& 320&     0.26E-06&   3.00&     0.22E-06&   3.00&     0.31E-06&   3.00\\
& 640&     0.32E-07&   3.00&     0.27E-07&   3.00&     0.39E-07&   3.00\\
	\hline
\multirow{5}{1em}{$ P^3$} 
&  40&     0.22E-06&  -&     0.18E-06&  -&     0.24E-06&  -\\
&  80&     0.16E-07&   3.76&     0.13E-07&   3.80&     0.13E-07&   4.16\\
& 160&     0.10E-08&   4.00&     0.79E-09&   4.00&     0.84E-09&   4.00\\
& 320&     0.62E-10&   4.00&     0.49E-10&   4.00&     0.52E-10&   4.00\\
& 640&     0.39E-11&   3.99&     0.31E-11&   3.99&     0.33E-11&   3.96\\
	\hline
	\end{tabular}
\end{table}

\begin{table}[!h]
	\centering
	\small
	\caption{Example \ref{exa:numerical1}. Error in $L^1$, $L^2$ and $L^\infty$ norm for solving NLS equation \eqref{eqn:nlswave}  using flux parameters (corresponding to  Case 2 in Table \ref{tab:central2}): $\alpha_1= \beta_1=0,  \beta_2=1$, ending time $T_e=1$.}   %
	\label{tab:numcentral2}
	\begin{tabular}{|c|c|c|c|c|c|c|c|}
	\hline
	& N   & $L^1$ error & order&$L^2$ error & order& $L^{\infty}$ error & order \\
	\hline
 \multirow{5}{1em}{$ P^1$}
& 40&     0.17E+00&   -&     0.13E+00&   -&     0.14E+00&   -\\
&  80&     0.92E-01&   0.90&     0.72E-01&   0.89&     0.75E-01&   0.87\\
& 160&     0.48E-01&   0.94&     0.38E-01&   0.94&     0.38E-01&   0.97\\
& 320&     0.24E-01&   0.97&     0.19E-01&   0.97&     0.19E-01&   0.98\\
& 640&     0.12E-01&   0.98&     0.97E-02&   0.98&     0.98E-02&   0.99\\
\hline
 \multirow{5}{1em}{$ P^2$}
& 40&     0.13E-03&  -&     0.11E-03&  -&     0.17E-03&  -\\
&  80&     0.16E-04&   3.00&     0.14E-04&   3.00&     0.20E-04&   3.02\\
& 160&     0.21E-05&   3.00&     0.18E-05&   3.00&     0.25E-05&   3.01\\
& 320&     0.26E-06&   3.00&     0.22E-06&   3.00&     0.31E-06&   3.01\\                         
& 640&     0.32E-07&   3.00&     0.27E-07&   3.00&     0.39E-07&   3.00\\  
\hline
 \multirow{5}{1em}{$ P^3$}
&  40&     0.68E-06&  -&     0.56E-06&  -&     0.83E-06&  -\\
&  80&     0.42E-07&   4.00&     0.35E-07&   4.01&     0.51E-07&   4.01\\
& 160&     0.26E-08&   4.00&     0.22E-08&   4.00&     0.32E-08&   4.00\\
& 320&     0.16E-09&   4.00&     0.14E-09&   4.00&     0.20E-09&   4.00\\
& 640&     0.10E-10&   4.00&     0.85E-11&   4.00&     0.13E-10&   4.00\\
	\hline
	\end{tabular}
\end{table}

\begin{table}[!h]
	\centering
	\small
	\caption{Example \ref{exa:numerical1}. Error in $L^1$, $L^2$ and $L^\infty$ norm for solving NLS equation \eqref{eqn:nlswave} using flux  parameters (corresponding to  Table \ref{tab:localp}) $\alpha_1 = 0.3$, $\beta_1 = \beta_2 = 0.4$, ending time $T_e=1$.} %
	\label{tab:numlocal1}
	\begin{tabular}{|c|c|c|c|c|c|c|c|}
	\hline
	& N   & $L^1$ error & order&$L^2$ error & order& $L^{\infty}$ error & order \\
\hline
 \multirow{5}{1em}{$ P^1$} 
&  40&     0.69E-01&   -&     0.54E-01&   -&     0.59E-01&   -\\
&  80&     0.37E-01&   0.89&     0.29E-01&   0.89&     0.30E-01&   0.95\\
& 160&     0.19E-01&   0.95&     0.15E-01&   0.95&     0.15E-01&   0.98\\
& 320&     0.98E-02&   0.98&     0.77E-02&   0.98&     0.78E-02&   0.99\\
& 640&     0.50E-02&   0.99&     0.39E-02&   0.99&     0.39E-02&   0.99\\
\hline
 \multirow{5}{1em}{$ P^2$} 
& 40&     0.14E-03&  -&     0.12E-03&  -&     0.18E-03&  -\\
&  80&     0.17E-04&   3.05&     0.15E-04&   3.06&     0.21E-04&   3.09\\
& 160&     0.21E-05&   3.03&     0.18E-05&   3.03&     0.26E-05&   3.05\\
& 320&     0.26E-06&   3.01&     0.22E-06&   3.01&     0.32E-06&   3.02\\
& 640&     0.32E-07&   3.01&     0.28E-07&   3.01&     0.40E-07&   3.01\\
	\hline
\multirow{5}{1em}{$ P^3$} 
& 40&     0.69E-06&  -&     0.57E-06&  -&     0.85E-06&  -\\
&  80&     0.42E-07&   4.02&     0.35E-07&   4.02&     0.52E-07&   4.03\\
& 160&     0.26E-08&   4.01&     0.22E-08&   4.01&     0.32E-08&   4.01\\
& 320&     0.16E-09&   4.00&     0.14E-09&   4.00&     0.20E-09&   4.01\\                            
& 640&     0.10E-10&   4.00&     0.85E-11&   4.00&     0.13E-10&   3.99\\
	\hline
	\end{tabular}
\end{table}

\begin{table}[!h]
	\centering
	\small
	\caption{Example \ref{exa:numerical1}. Error in $L^1$, $L^2$ and $L^\infty$ norm for solving NLS equation \eqref{eqn:nlswave} using flux  parameters (corresponding to  Table \ref{tab:localp2}) $\alpha_1 = 0.3$, $\beta_1 = 0.4h, \beta_2 = 0.4/h$, ending time $T_e=1$.} %
	\label{tab:numlocal2}
	\begin{tabular}{|c|c|c|c|c|c|c|c|}
	\hline
	& N   & $L^1$ error & order&$L^2$ error & order& $L^{\infty}$ error & order \\
\hline
 \multirow{5}{1em}{$ P^1$} 
&  40&     0.66E-02&   -&     0.57E-02&   -&     0.97E-02&   -\\
&  80&     0.24E-02&   1.42&     0.20E-02&   1.50&     0.33E-02&   1.56\\
& 160&     0.43E-03&   2.51&     0.35E-03&   2.56&     0.52E-03&   2.66\\                              
& 320&     0.11E-03&   2.00&     0.86E-04&   2.00&     0.13E-03&   1.99\\                                   
& 640&     0.27E-04&   2.00&     0.22E-04&   2.00&     0.33E-04&   1.99\\      
\hline
 \multirow{5}{1em}{$ P^2$} 
& 40&     0.36E-03&  -&     0.31E-03&  -&     0.56E-03&  -\\
&  80&     0.45E-04&   2.99&     0.39E-04&   2.99&     0.70E-04&   3.01\\
& 160&     0.56E-05&   3.00&     0.49E-05&   3.00&     0.87E-05&   3.00\\
& 320&     0.70E-06&   3.00&     0.62E-06&   3.00&     0.11E-05&   3.00\\
& 640&     0.88E-07&   3.00&     0.77E-07&   3.00&     0.13E-06&   3.00\\        
	\hline
\multirow{5}{1em}{$ P^3$} 
&  40&     0.79E-06&  -&     0.66E-06&  -&     0.11E-05&  -\\
&  80&     0.49E-07&   4.00&     0.41E-07&   4.00&     0.66E-07&   4.00\\
& 160&     0.31E-08&   4.00&     0.26E-08&   4.00&     0.41E-08&   4.00\\
& 320&     0.19E-09&   4.00&     0.16E-09&   4.00&     0.26E-09&   4.00\\
& 640&     0.12E-10&   4.00&     0.10E-10&   4.00&     0.16E-10&   4.00\\
	\hline
	\end{tabular}
\end{table}

\begin{table}[!h]
	\centering
	\small
	\caption{Example \ref{exa:numerical1}. Error in $L^1$, $L^2$ and $L^\infty$ norm for solving NLS equation \eqref{eqn:nlswave}   using flux parameters (corresponding to Case 1.6.1 in Table \ref{tab:case11}): $\alpha_1=0.25,  \tilde \beta_1=\frac{k(k-1)}{2} + \frac{k(k+1)}{8}, \tilde \beta_2=1.0, A_1 = -1, A_2 = 2,3$, ending time $T_e=1$. } %
	\label{tab:numglobal}
	\begin{tabular}{|c|c|c|c|c|c|c|c|}
	\hline
	& N   & $L^1$ error & order&$L^2$ error & order& $L^{\infty}$ error & order \\
	\hline
  \multirow{5}{5em}{$ P^1$ \\ $A_2 = 2$ \\  $ \tilde \bo = \frac{1}{4}$}
&  40&     0.41E-02&   -&     0.37E-02&   -&     0.72E-02&   -\\
&  80&     0.12E-02&   1.77&     0.10E-02&   1.82&     0.21E-02&   1.80\\
& 160&     0.31E-03&   1.93&     0.25E-03&   2.05&     0.39E-03&   2.39\\
& 320&     0.87E-04&   1.86&     0.69E-04&   1.87&     0.10E-03&   1.94\\
& 640&     0.23E-04&   1.93&     0.18E-04&   1.94&     0.26E-04&   1.97\\
\hline
 \multirow{5}{5em}{$ P^2$ \\ $A_2 = 2$ \\  $\tilde \bo = \frac{7}{4}$}
&  40&     0.49E-04&  -&     0.49E-04&  -&     0.13E-03&  -\\
&  80&     0.83E-05&   2.55&     0.73E-05&   2.74&     0.14E-04&   3.23\\
& 160&     0.31E-05&   1.44&     0.29E-05&   1.32&     0.65E-05&   1.12\\
& 320&     0.95E-06&   1.69&     0.92E-06&   1.69&     0.20E-05&   1.70\\
& 640&     0.26E-06&   1.85&     0.25E-06&   1.86&     0.55E-06&   1.87\\
\hline
 \multirow{5}{5em}{$ P^2$ \\ $A_2 = 3$ \\ $\tilde \bo = \frac{7}{4}$}
&  40&     0.36E-03&  -&     0.34E-03&  -&     0.74E-03&  -\\
&  80&     0.21E-03&   0.78&     0.20E-03&   0.76&     0.43E-03&   0.77\\
& 160&     0.11E-03&   0.92&     0.11E-03&   0.92&     0.23E-03&   0.92\\
& 320&     0.56E-04&   1.00&     0.53E-04&   1.00&     0.11E-03&   0.99\\
& 640&     0.28E-04&   1.00&     0.27E-04&   1.00&     0.58E-04&   1.00\\
\hline
 \multirow{4}{5em}{$ P^3$ \\ $A_2 = 2$ \\  $\tilde \bo = \frac{9}{2}$}
& 40&     0.19E-05&  -&     0.19E-05&  -&     0.43E-05&  -\\
&  80&     0.43E-07&   5.50&     0.38E-07&   5.65&     0.84E-07&   5.66\\
& 160&     0.15E-08&   4.88&     0.15E-08&   4.68&     0.26E-08&   5.00\\
& 320&     0.91E-10&   4.00&     0.90E-10&   4.02&     0.17E-09&   3.94\\
&  640&     0.58E-11&   3.96&     0.57E-11&   3.99&     0.11E-10&   3.98\\
	\hline
	\end{tabular}
\end{table}

\begin{table}[!h]
	\centering
	\small
	\caption{Example \ref{exa:numerical1}. Error in $L^1$, $L^2$ and $L^\infty$ norm for solving NLS equation \eqref{eqn:nlswave}   using flux parameters (corresponding to  Case 1.7.2 in Table \ref{tab:case13}): $\alpha_1=0.25,  \tilde \beta_1=-1, \tilde \beta_2=\frac{1}{2k(k+1)}, A_1 = -2,-3, A_2 = 1$, ending time $T_e=1$.}%
	\label{tab:numglobal2}
	\begin{tabular}{|c|c|c|c|c|c|c|c|}
	\hline
	& N   & $L^1$ error & order&$L^2$ error & order& $L^{\infty}$ error & order \\
	\hline
 \multirow{5}{5em}{$ P^2$ \\$A_1 = -2$ \\ $\tilde \bo = \frac{1}{12}$}
&  40&     0.60E-04&  -&     0.54E-04&  -&     0.95E-04&  -\\
&  80&     0.76E-05&   2.99&     0.68E-05&   2.98&     0.12E-04&   2.96\\
& 160&     0.96E-06&   3.00&     0.85E-06&   3.00&     0.15E-05&   2.99\\
& 320&     0.12E-06&   3.00&     0.11E-06&   3.00&     0.19E-06&   2.99\\                            
& 640&     0.15E-07&   3.00&     0.13E-07&   3.00&     0.24E-07&   3.00\\ 
\hline
 \multirow{5}{5em}{$ P^2$ \\ $A_1 = -3$ \\ $\tilde \bo = \frac{1}{12}$}
&  40&     0.95E-04&  -&     0.85E-04&  -&     0.15E-03&  -\\
&  80&     0.21E-04&   2.22&     0.18E-04&   2.20&     0.33E-04&   2.18\\
& 160&     0.49E-05&   2.08&     0.44E-05&   2.07&     0.79E-05&   2.06\\
& 320&     0.12E-05&   2.02&     0.11E-05&   2.02&     0.20E-05&   2.02\\
& 640&     0.29E-06&   2.02&     0.27E-06&   2.02&     0.48E-06&   2.02\\
	\hline
	\end{tabular}
\end{table}

\begin{Example} 
\label{exa:soliton}
A simulation for the NLS equation
\begin{equation}
\label{eqn:soliton1}
iu_t+u_{xx}+2|u|^2u=0
\end{equation}
with  double-soliton collision  
\begin{equation}
\label{eqn:doublesoliton}
u(x,t)=\sech(x+10-4t)\exp(i(2(x+10)-3t)) + \sech(x-10+4t)\exp(i(-2(x-10)-3t)).
\end{equation}
\end{Example}

We use periodic boundary condition and $L^2$ projection initialization to run the simulation for double-soliton collision solution. The two waves propagate in opposite directions and collide at $t=2.5$, after that, the two waves separate. Such behaviors are accurately captured by our numerical simulations, see Figure \ref{fig:soliton1} for details.

\begin{figure}[!ht]
\centering
\begin{tabular}{cc}
\includegraphics[width=.45\textwidth]{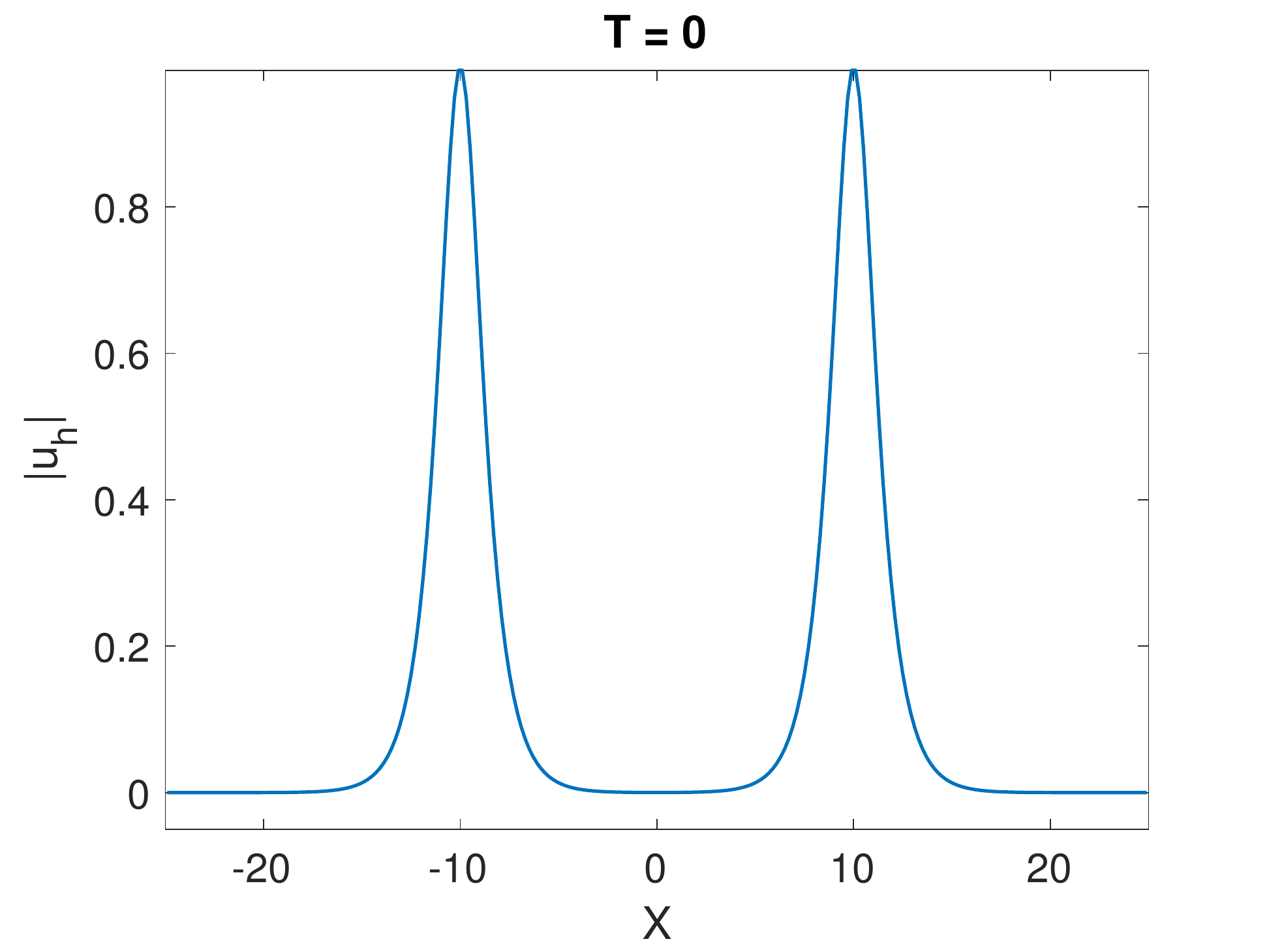}
\includegraphics[width=.45\textwidth]{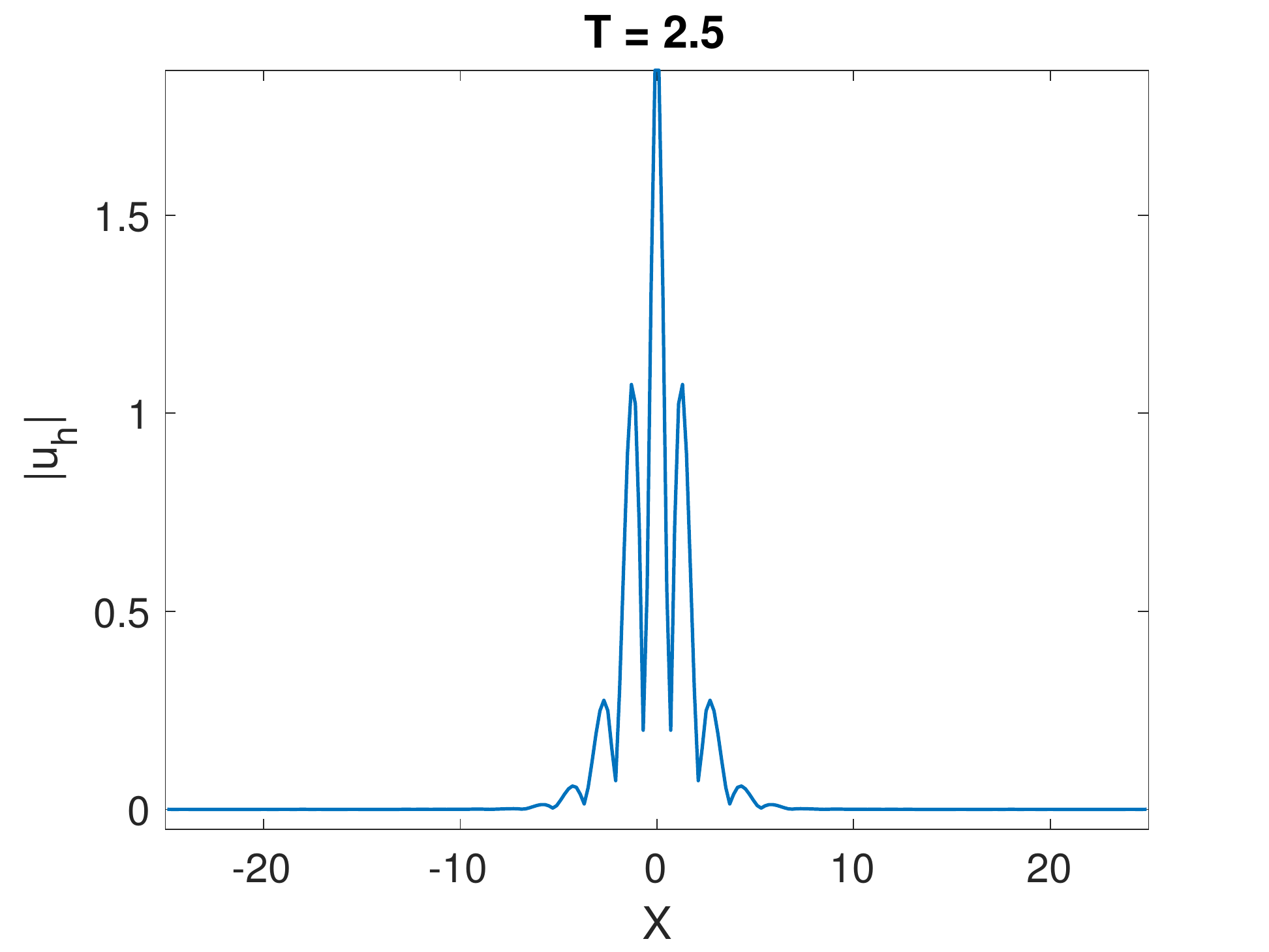}\\
\includegraphics[width=.45\textwidth]{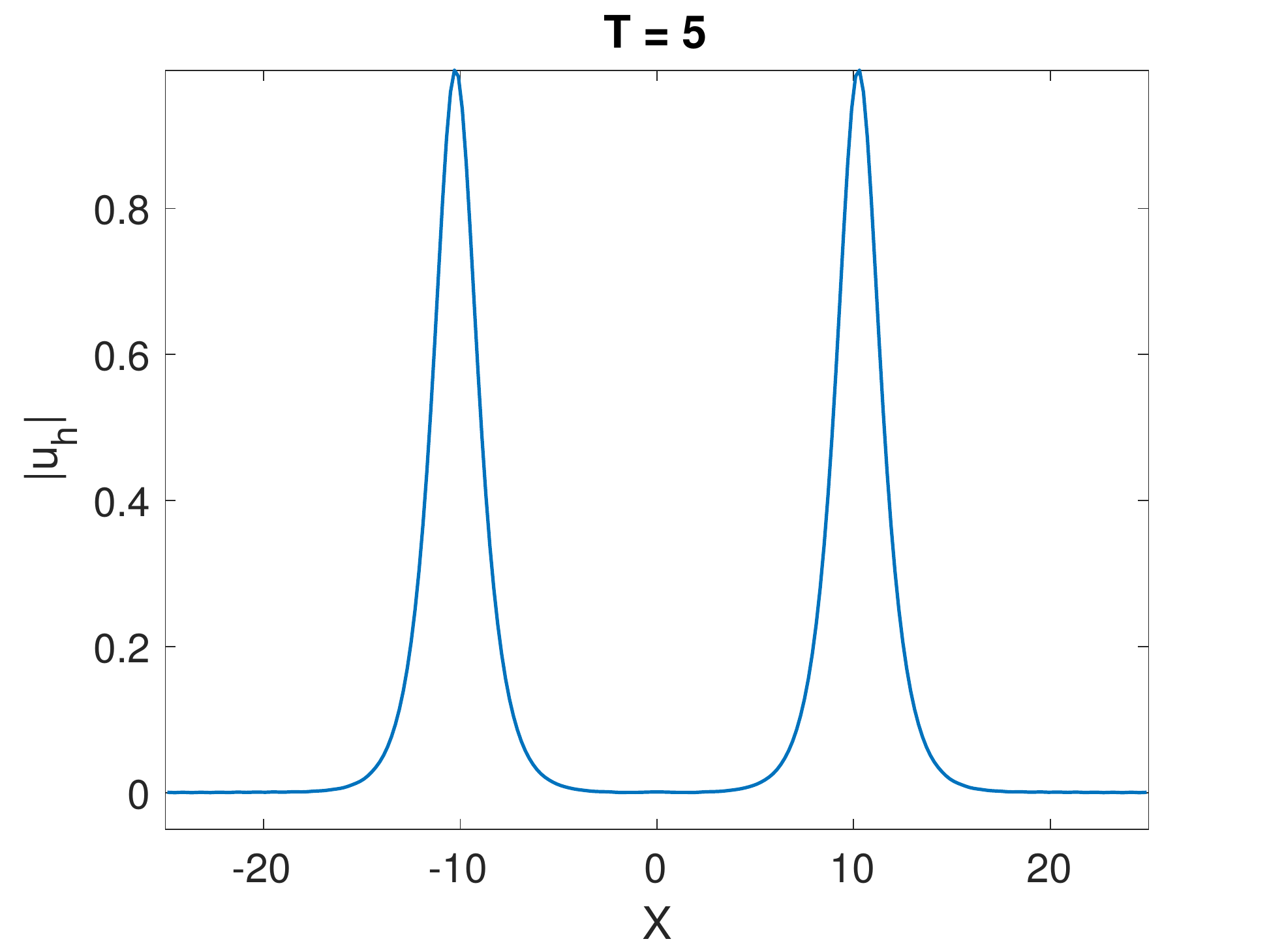}
\includegraphics[width=.45\textwidth]{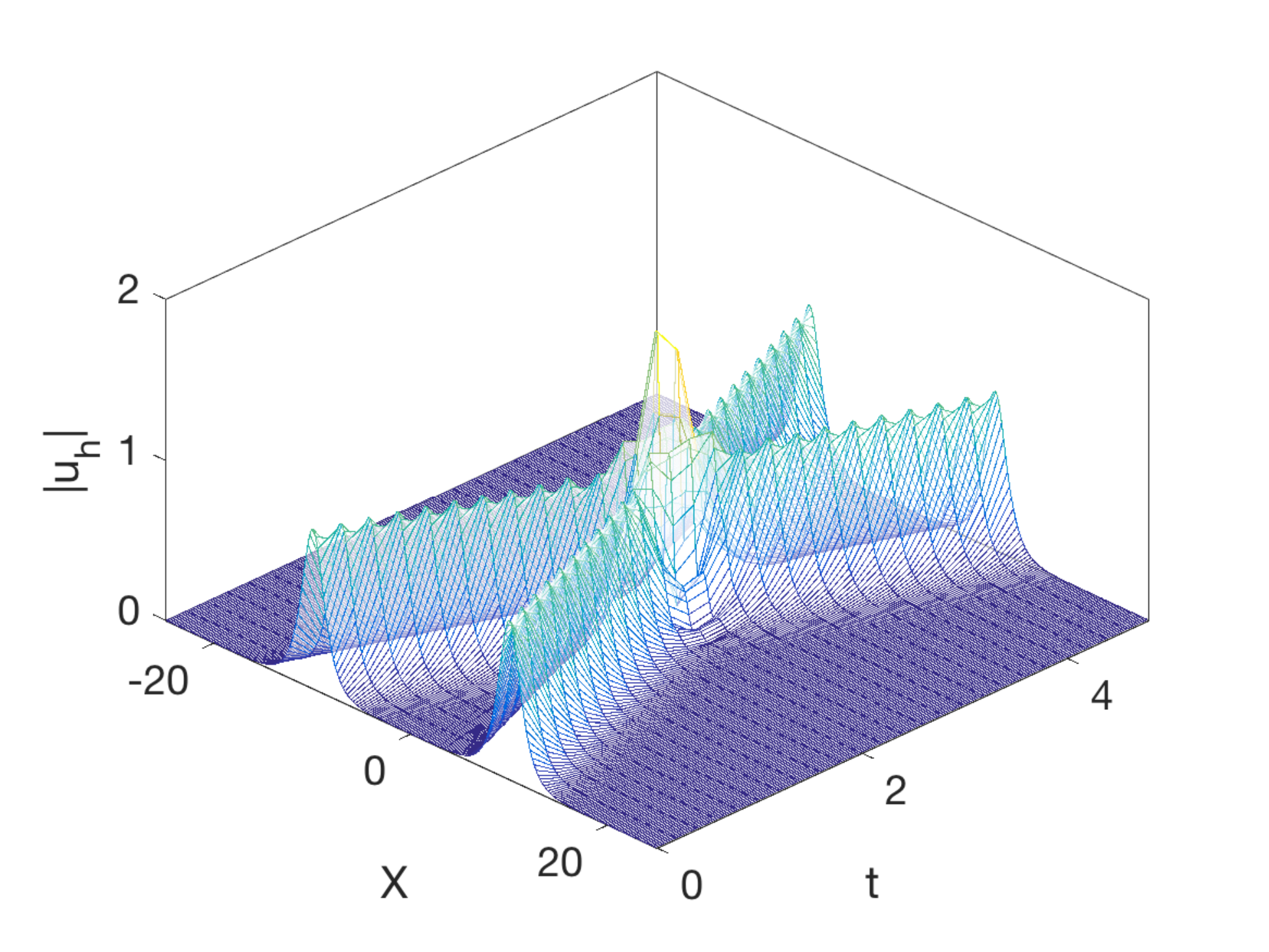}
\end{tabular}
\caption{Example \ref{exa:soliton}. Double soliton collision graphs at $t=0,2.5,5$ and a $x-t$ plot of the numerical solution. $N=250, P^2$ elements with periodic boundary conditions on [-25,25]. Central flux ($\alpha_1 = \beta_1 = \beta_2 = 0$) is used.}
\label{fig:soliton1}
\end{figure}

\section{Conclusions and future work}
\label{sec:conclusion}
In this paper, we studied the ultra-weak DG method with a general class of numerical fluxes  for solving one-dimensional nonlinear \sch equation with periodic boundary conditions. Semi-discrete $L^2$ stability and   error estimates are obtained  when the polynomial degree $k\geq 1$. Focusing on the real parameters, we performed detailed investigation of the associated projection operators. Our analysis assume the dependence of parameters on the mesh size $h$ can be freely enforced, hence several cases follow. A variety of analytic tools are employed, including decoupling of global projection into local projection, analysis of block-circulant matrix and Fourier analysis. We acquire error bounds that are sharp in most cases from numerical verifications. Future work includes improvement of the error bounds for some suboptimal cases, superconvergence studies and generalization to higher-dimensions.

\appendix
 
\section{Appendix}
\subsection{ Proof of Lemma \ref{lem:localp} }
\label{sec:a01}

First, we consider the case when $\bo \neq 0$. Define the difference operator $Wu= \pst u- \ppm u$, then \eqref{eqn:pst2} implies:
\beq
\label{eqn:W1}
\begin{aligned}
\intj W u  \, \vh dx &=0 \quad & &\forall \vh \in P^{k-2}(I_j), \\
Wu^+ + \frac{\ot+\ao}{\bo}(W u)_x^+ & =u - (\ppm u)^+ & & \textrm{at} \, x_{\jm},\\
Wu^- -\frac{\ot-\ao}{\bo}(W u)_x^- 	& = -\frac{\ot-\ao}{\bo}(u_x - (\ppm u)_x^-) & & \textrm{at} \, x_{\jp}.
\end{aligned}
\eeq

For $l\geq 0$, let $P_l(\xi)$ be the $l$-th order Legendre polynomials on [-1,1], with $\xi=\frac{2(x-x_j)}{h_j}$ on $I_j,$ and define $P_{j,l}(x)=P_l(\frac{2(x-x_j)}{h_j})=P_l(\xi)$. Then $W u$ can be expressed as:
\[
W u(x)=\sum_{l=0}^k a_{j,l}P_{j,l}(x)=\sum_{l=0}^k a_{j,l}P_l(\xi).
\]

By the first equation in \eqref{eqn:W1} and orthogonality of Legendre polynomials, one can get:
\[
a_{j,l}=0, \quad l=0,\cdots,k-2,\quad j=1,\cdots,N.
\]

We can then move on to solve for $a_{j,k-1}$ and $a_{j,k}$ on each cell directly by the second and third equations in \eqref{eqn:W1}. By properties of Legendre polynomials: $P_l(\pm1)=(\pm1)^l,\,P'_l(\pm1)=\ot(\pm1)^{l-1}l(l+1)$, the following $2\times2$ linear system holds on each cell $I_j$:

$$
\mathcal{M}_j \begin{bmatrix}
a_{j,k-1} \\
a_{j,k}
\end{bmatrix}
=\begin{bmatrix}
\phi_j	\\
\psi_j
\end{bmatrix},
$$
where
\beq
\notag
\mathcal{M}_j=\begin{bmatrix}
(\mathcal{M}_j)_{11}	&	(\mathcal{M}_j)_{12} \\
(\mathcal{M}_j)_{21}				&	(\mathcal{M}_j)_{22}
\end{bmatrix}
=
\begin{bmatrix}
(-1)^{k-1} + (-1)^{k} \frac{\ot+\ao}{\bo} \frac{k(k-1)}{h_j} 	&	(-1)^k + (-1)^{k-1}\frac{\ot+\ao}{\bo} \frac{k(k+1)}{h_j} \\
1  -\frac{\ot-\ao}{\bo}\frac{k(k-1)}{h_j}					&	1  -\frac{\ot-\ao}{\bo}\frac{k(k+1)}{h_j}
\end{bmatrix}
\eeq
and 
$\phi_j=(u - (\ppm u)^+)|_{x_{\jm}}$ and $\psi_j=-\frac{\ot-\ao}{\bo}(u_x - (\ppm u)_x^-)|_{x_{\jp}}.$

We can calculate the determinant of the matrix $\mathcal{M}_j$ to be $2(-1)^{k-1} + 2(-1)^{k}\frac{k^2}{\bo h_j} + 2(-1)^{k-1} \frac{\bt k^2(k^2-1)}{\bo h_j^2} = 2(-1)^{k-1} \Gamma_j / \bo.$ Hence, when $\Gamma_j  \ne 0, \, \forall j$, $\pst$ exists and is unique.  
  We now move on to estimate the $a_{j,k-1}, a_{j,k}.$ Clearly, 
\begin{eqnarray*}
a_{j,k-1}=\frac{1}{det\mathcal{M}_j} ( (\mathcal{M}_j)_{22} \phi_j -(\mathcal{M}_j)_{12} \psi_j ) \\
a_{j,k}=\frac{1}{det\mathcal{M}_j} (- (\mathcal{M}_j)_{21} \phi_j +(\mathcal{M}_j)_{11} \psi_j),
\end{eqnarray*}
and from the projection property of $\ppm$, $|\phi_j| \le Ch^{k+1}_j |u|_{W^{k+1,\infty}(I_j)}, |\psi_j| \le C  h^{k}_j  |\frac{\ot-\ao}{\bo}||u|_{W^{k+1,\infty}(I_j)}.$ The error estimates can be obtained based on the following cases.
\begin{itemize}
\item
If $k=1$, then 
\[
\begin{aligned}
a_{j,0}& 
		 = \frac{1}{2\Gamma_j} \big( (\bo -(\ot-\ao) \frac{2}{h_j}) (u - \ppm u)^+|_{x_{\jm}} - (\ot-\ao - \frac{2\bt}{h_j})(u_x - (\ppm u)_x)^-|_{x_{\jp}} \big),
		 \\
a_{j,1}&   = \frac{1}{2\Gamma_j} \big( -\bo (u - \ppm u)^+|_{x_{\jm}} -(\ot-\ao)(u_x - (\ppm u)_x)^-|_{x_{\jp}}\big).
\end{aligned}
\]

Thus we have estimates
\begin{eqnarray*}
|a_{j,0}|&\le&  \frac{Ch_j^2 |u|_{W^{2,\infty}(I_j)}}{|\Gamma_j|}  \max \left( \left | \bo - \frac{1-2\ao}{h_j} \right |, \left | \frac{\ot-\ao}{h_j} - \frac{2\bt}{h_j^2} \right | \right)  , \\
|a_{j,1}|&\le&  \frac{Ch_j^2 |u|_{W^{2,\infty}(I_j)}}{|\Gamma_j|} \max \left ( | \bo|, \left |\frac{ \ot - \ao}{h_j} \right | \right ) .
\end{eqnarray*}

Then,%
\begin{eqnarray}
\label{eqn:pf1}
&&\|\pst u-\ppm u\|_{L^\infty (I_j)} =\| a_{j,0}P_{0}(\xi) + \alpha_{j,1} P_1 (\xi) \|_{L^\infty(I_j)}  \notag \\
 &&  
 \leq  \frac{Ch_j^2 |u|_{W^{2,\infty}(I_j)}}{|\Gamma_j|} \max \left ( | \bo|, \left |\frac{ \ot - \ao}{h_j} \right |, \left | \frac{\bt}{h_j^2} \right | \right ).
\end{eqnarray}

Combining with the error estimates for $\ppm$ and the mesh regularity assumption, we get
\begin{eqnarray*}
&&\|\pst u-u\|_{L^p(I)} %
 \leq  Ch^2 |u|_{W^{2,\infty}(I)} \left ( 1+ \frac{ \max \left ( | \bo|,\frac{ |\ot - \ao|}{ h} ,  \frac{|\bt|}{h^2}  \right )}{\min_j{|\Gamma_j|}} \right )  , \quad p=2,\infty. 
\end{eqnarray*}

\item If $k>1$, then we need to discuss the case when $\bt=0$ or $\bt \ne 0.$

If $\bt =0,$ then $\ao=\pm \frac{1}{2}.$ %
When $\ao=\frac{1}{2},$ we have $\psi_j=0,$ and
\begin{eqnarray*}
|a_{j,k-1}|&\le&  C  h^{k+1}_j  |u|_{W^{k+1,\infty}(I_j)} \frac{|\bo|}{|\Gamma_j|},\\
|a_{j,k}|&\le&  C   h^{k+1}_j  |u|_{W^{k+1,\infty}(I_j)} \frac{|\bo|}{|\Gamma_j|}.
\end{eqnarray*}

Therefore,
\begin{eqnarray}
\label{eqn:superc1}
\|\pst u-\ppm u\|_{L^\infty(I_j)} \leq C h^{k+1}_j  |u|_{W^{k+1,\infty}(I_j)}\frac{|\bo|}{|\Gamma_j|}, 
\end{eqnarray}
implying a supercloseness between $\pst$ and $\ppm$ if $\bo/\Gamma_j=o(1)$.   In summary, we have %
\begin{eqnarray*}
&&
\|\pst u-u\|_{L^p(I)} \leq  C h^{k+1}  |u|_{W^{k+1,\infty}(I)} \left ( 1+   \frac{|\bo|}{\min_j|\Gamma_j|}  \right ) , \quad p=2,\infty. 
\end{eqnarray*}

When $\ao=-\frac{1}{2},$ we then should compare the projection with $\pt$ instead of $\ppm$. %
We skip the details of the calculations. The conclusion is similar, i.e.
\begin{eqnarray*}
&& 
\|\pst u-\pt u\|_{L^\infty(I_j)} \leq C h^{k+1}_j  |u|_{W^{k+1,\infty}(I_j)} \frac{|\bo|}{|\Gamma_j|} ,
\end{eqnarray*}
and
\begin{eqnarray*}
&&
\|\pst u-u\|_{L^p(I)} \leq  C h^{k+1}  |u|_{W^{k+1,\infty}(I)} \left ( 1+ \frac{|\bo|}{\min_j |\Gamma_j|} \right )  , \quad p=2,\infty. 
\end{eqnarray*}

If $\bt \ne 0,$ similar to previous case, we can show
\begin{eqnarray*}
|a_{j,0}|&\le&  \frac{Ch_j^{k+1} |u|_{W^{k+1,\infty}(I_j)}}{|\Gamma_j|}  \max \left( | \bo| , \left| \frac{\ot-\ao}{h_j} \right |, \left |  \frac{\bt}{h_j^2} \right | \right) , \\
|a_{j,1}|&\le& \frac{Ch_j^{k+1} |u|_{W^{k+1,\infty}(I_j)}}{|\Gamma_j|}  \max \left( | \bo|,  \left| \frac{\ot-\ao}{h_j} \right |, \left |  \frac{\bt}{h_j^2} \right | \right)  .
\end{eqnarray*}

Therefore, 
\begin{eqnarray*}
&&\|\pst u-u\|_{L^\infty(I_j)} \leq Ch_j^{k+1} |u|_{W^{k+1,\infty}(I_j)} \left ( 1+ \frac{\max \left( | \bo|,  \left| \frac{\ot-\ao}{h_j} \right |, \left |  \frac{\bt}{h_j^2} \right | \right)}{|\Gamma_j|} \right )
\end{eqnarray*}
and it leads to
\begin{equation*}
\|\pst u-u\|_{L^p(I)} \leq Ch^{k+1} |u|_{W^{k+1,\infty}(I)} \left ( 1+ \frac{\max \left( | \bo|,    \frac{|\ot-\ao|}{ h},    \frac{|\bt|}{h^2}  \right)}{\min_j |\Gamma_j|} \right ), \quad p = 2,\infty.
\end{equation*}
\end{itemize}

Finally, when $\bo =0, \bt \neq 0, \ao = \pm \ot$, we have the following estimates
\begin{eqnarray*}
&& \|\pst u-u\|_{L^p(I)} \leq  C h^{k+1}  |u|_{W^{k+1,\infty}(I)} \left ( 1+ \frac{|\bt| }{h^2\min_j |\Gamma_j|} \right )  , \quad p=2,\infty. 
\end{eqnarray*}

Summarizing all the estimates, we have shown \eqref{eqn:localpest} for all cases.

\subsection{ Proof of Lemma \ref{lem:globalp} }
\label{sec:a02}

 We adopt similar notations as in the proof of Lemma \ref{lem:localp}.
Define $\xi=\frac{2(x-x_j)}{h}$, and let
\[
\pst u(x)|_{I_j}=\sum_{l=0}^k \gamma_{j,l}P_{j,l}(x)=\sum_{l=0}^k \gamma_{j,l}P_l(\xi).
\]

By   \eqref{eqn:psts1} and orthogonality of Legendre polynomials, one can get:
\[
\gamma_{j,l}=\frac{2l+1}{2}\int_{-1}^{1} u(x_j+\frac{h}{2}\xi)P_l(\xi)d\xi, \quad l=0,\cdots,k-2,\quad j=1,\cdots,N.
\]

We can then move on to solve for $\gamma_{j,k-1}$ and $\gamma_{j,k}$ from \eqref{eqn:psts2}-\eqref{eqn:psts3}. At $x_{j+\ot}$, 
\beq
\label{eqn:pstcomp}
\begin{aligned}
\widehat{\pst u} & = \sum_{l=0}^{k}\{ \gamma_{j,l}((\ot +\ao)P_l(1)-\bt \toh P'_l(1)) \\
			& + \gamma_{j+1,l}((\ot -\ao) P_l(-1)+\bt \toh P'_l(-1)) \}\\
                     & = u(x_{\jp}),\\
\widetilde{\pst u_x} & = \sum_{l=0}^{k} \gamma_{j,l}((\ot -\ao)\toh P'_l(1)-\bo P_l(1)) \\
			& + \gamma_{j+1,l}((\ot +\ao)\toh P_l'(-1)+\bo P_l(-1)) \\
                           &=u_x(x_{\jp}).
\end{aligned}
\eeq
Combining \eqref{eqn:pstcomp} for all $j$  and using the periodic boundary condition will result in the following $2N\times2N$ linear system 
\begin{equation}
\label{eqn:matform}
M
\begin{bmatrix}
\gamma_{1, k-1} \\
\gamma_{1,k} \\
\cdots \\
\gamma_{N-1,k-1} \\
\gamma_{N-1,k} \\
\gamma_{N, k-1} \\
\gamma_{N,k}
\end{bmatrix}
=
\begin{bmatrix}
\phi_1 \\
\psi_1 \\
\cdots \\
\phi_N-1 \\
\psi_N-1 \\
\phi_{N} \\
\psi_{N} \\
\end{bmatrix}
\end{equation}
where  $M=circ(A,B,0_2,\cdots,0_2)$,  denoting a $2N\times 2N$ block-circulant matrix with first two rows as $(A,B,0_2,\cdots,0_2)$, with $0_2$ as a $2\times 2$ zero matrix, and
\begin{eqnarray}
A & = &
 \begin{bmatrix}
\ot+\ao & -\bt \\
-\bo	& \ot - \ao
\end{bmatrix}
\begin{bmatrix}
\pkmopos & \pkpos \\
\toh \dpkmopos & \toh \dpkpos
\end{bmatrix}, \label{eqn:A}\\
B & = &
\begin{bmatrix}
\ot-\ao & \bt \\
\bo	& \ot + \ao
\end{bmatrix}
\begin{bmatrix}
\pkmoneg & \pkneg \\
\toh \dpkmoneg & \toh \dpkneg
\end{bmatrix} , \label{eqn:B}
\end{eqnarray}
$\phi_{j}=u(x_{j+\ot}) - \sum_{l=0}^{k-2} \{\gamma_{j,l}((\ot +\ao)P_l(1)-\bt \toh P'_l(1)) + \gamma_{j+1,l}((\ot -\ao) P_l(-1)+\bt \toh P'_l(-1)) \}$, $\psi_{j}=u_x(x_{j+\ot}) - \sum_{l=0}^{k-2} \{ \gamma_{j,l}((\ot -\ao)\toh P'_l(1)-\bo P_l(1)) + \gamma_{j+1,l}((\ot +\ao)\toh P_l'(-1)+\bo P_l(-1)) \}$.
We can calculate that
\beq
\label{eqn:ABdet}
\begin{aligned}
detA=detB=\frac{-2k}{h}(\ao^2+\bo\bt-\frac{1}{4}) := \Lambda  \ne 0.
\end{aligned}
\eeq

It is clear that the existence and uniqueness of $\pst$ is equivalent to $detM\neq0.$ By a direct computation,  $detM=detA^N det(I_2-Q^N),$ where $I_2$ denotes the $2\times2$ identity matrix, and
\begin{align*}
Q &=-\inva B=\frac{(-1)^{k+1}}{\Lambda}
\begin{bmatrix}
c_1+c_2 & b_1+b_2 \\
b_1-b_2 & c_1-c_2
\end{bmatrix}, 
\end{align*}
with
\begin{align}
c_1 &=\bo +\frac{k^2(k^2-1)}{h^2}\bt -\frac{2k^2}{h}(\ao^2+\bo \bt+\frac{1}{4}):=\Gamma, \label{eqn:c1}\\
c_2 &=\frac{k}{h}(2\ao), \label{eqn:c2}\\
b_1 &=-\bo-\frac{k^2(k^2+1)}{h^2}\bt +\frac{2k^2}{h}(\ao^2+\bo\bt+\frac{1}{4}), \label{eqn:b1}\\
b_2 &=-\frac{2k^3}{h^2}\bt+\frac{2k}{h}(\ao^2+\bo\bt+\frac{1}{4}).
\label{eqn:b2}
\end{align}
 The eigenvalues of $Q$ are
\begin{equation}
\label{eqn:eig2}
\lambda_{1}=\frac{(-1)^{(k+1)}}{\Lambda}(\Gamma+ \sqrt{\Gamma^2-\Lambda^2}), \quad \lambda_{2}=\frac{(-1)^{(k+1)}}{\Lambda}(\Gamma- \sqrt{\Gamma^2-\Lambda^2}).
\end{equation}

Since $detQ=detB/detA= 1$, we have the relations $\lo \lt =1$ and 
\beq
\label{eqn:b1b2eqn}
b_1^2 - b_2^2 = \Gamma^2 - \Lambda^2 - c_2^2.
\eeq

Below we will discuss the existence and uniqueness of $\pst$  based on three cases depending on the relation of $\Gamma$ and $\Lambda.$

\underline{Case 1.} If $|\Gamma|>|\Lambda|$, then $\lambda_{1,2}$ are real and   different.  Therefore, we can perform eigenvalue decomposition of $Q$, 
$$
Q=T D T^{-1},
$$
where 
$$
D=\begin{bmatrix}
\lo  & 0 \\
0 & \lt
\end{bmatrix},
$$

and

\beq
\label{eqn:T}
T=
\begin{bmatrix}
1 & -\frac{b_1+b_2}{c_2+\sqrt{\Gamma^2-\Lambda^2}} \\
\frac{b_1-b_2}{c_2+\sqrt{\Gamma^2-\Lambda^2}} & 1\\
\end{bmatrix},
T^{-1}=\frac{1}{detT}
\begin{bmatrix}
1 & \frac{b_1+b_2}{c_2+\sqrt{\Gamma^2-\Lambda^2}} \\
-\frac{b_1-b_2}{c_2+\sqrt{\Gamma^2-\Lambda^2}} & 1
\end{bmatrix},
\eeq
where $detT=\frac{2  \sqrt{\Gamma^2-\Lambda^2}}{c_2+\sqrt{\Gamma^2-\Lambda^2}}$,
except for the case when  $(b_1-b_2)(b_1+b_2) = 0$ and  $c_2 <0$, where
\beq
\label{eqn:T2}
T=
\begin{bmatrix}
1 & -\frac{b_1+b_2}{2c_2} \\
\frac{b_1-b_2}{2c_2} & 1\\
\end{bmatrix},
T^{-1}= 
\begin{bmatrix}
1 & \frac{b_1+b_2}{2c_2} \\
-\frac{b_1-b_2}{2c_2} & 1
\end{bmatrix}.
\eeq

In both situations, we have
\[
detM=detA^N det(I_2-
\begin{bmatrix}
\lo ^N & 0 \\
0 & \lt^N
\end{bmatrix})
=detA^N det(
\begin{bmatrix}
1-\lo ^N & 0 \\
0 & 1-\lt ^N
\end{bmatrix}).
\]
$detM\neq 0$ if and only if $(\lo)^N\neq 1$ and $(\lt)^N\neq 1$. This is clearly true since $\abs{\lo,\lt} \neq 1$.

\underline{Case 2.} If $|\Gamma|=|\Lambda|,$ then $\lo = \lt =(-1)^{k+1} \frac{\Gamma}{\Lambda}$ and we have two repeated eigenvalues.
Perform Jordan decomposition:
\[
\begin{bmatrix}
c_1+c_2 & b_1+b_2 \\
b_1-b_2 & c_1-c_2
\end{bmatrix}
=
\mathcal{T}
\begin{bmatrix}
c_1 & 1 \\
0 & c_1
\end{bmatrix}
\mathcal{T}^{-1},
\]

and
\begin{eqnarray}
\label{eqn:mathcalT}
\mathcal{T}=\begin{bmatrix}
c_2 & 1 \\
b_1-b_2 & 0
\end{bmatrix}, 
&\qquad& \textrm{if} \, b_1\ne b_2, 
\\
\mathcal{T}=\begin{bmatrix}
2b_1 & 0 \\
0 & 1 
\end{bmatrix},
&\qquad& \textrm{if} \, b_1=b_2. \notag
\end{eqnarray}

We define
\[
J=
\begin{bmatrix}
c_1 & 1 \\
0 & c_1
\end{bmatrix}, \qquad
\mathcal{J}
=\frac{(-1)^{k+1}}{\Lambda}
\begin{bmatrix}
c_1 & 1 \\
0 & c_1
\end{bmatrix}
=
\begin{bmatrix}
\lambda_1 &  \frac{(-1)^{k+1}}{\Lambda} \\
0 & \lambda_1
\end{bmatrix},
\]

then
\begin{eqnarray*}
Q^j & = & 
\mathcal{T}
\mathcal{J}^j
\mathcal{T}^{-1}, \quad
\mathcal{J}^j
=
\begin{bmatrix}
\lambda_1^j & \kappa_j \\
0 & \lambda_1^j
\end{bmatrix},
\\
I_2 - Q^N & = &
\mathcal{T}
\begin{bmatrix}
1-(\lambda_1)^N& -\kappa_N \\
0 & 1-(\lambda_1)^N
\end{bmatrix}
\mathcal{T}^{-1},
\end{eqnarray*}
where $\kappa_j=\frac{(-1)^{(k+1)j}}{\Lambda^j}j \Gamma^{j-1}.$ 

In both situations,
$detM\neq 0$ if and only if $(\lo)^N\neq 1$, meaning that we require $N$ to be odd and further, if $k$ is odd, we require  $\Gamma=-\Lambda;$ if $k$ is even, we require $\Gamma=\Lambda.$ In both cases, $\lo=\lt=-1.$

\underline{Case 3.} If $|\Gamma|<|\Lambda|,$ then $\lambda_{1,2}$ are complex, $|\lambda_{1,2} | =1$, $\lo = \overline{\lt}$, still $Q$ is diagonalizable, and similar to  Case 1, $detM\neq 0$ turns to $(\lo)^N\neq 1$ and $(\lt)^N=\overline{(\lo)^N}\neq 1$, i.e. we require
$$
(-1)^{(k+1)N}\left(\frac{\Gamma}{\Lambda}+\sqrt{\left (\frac{\Gamma}{\Lambda} \right)^2-1}\right)^N \ne 1.
$$

\subsection{ Proof of Lemmas \ref{lem:globalpcase1}-\ref{lem:globalpcase3} }
\label{sec:a03}

\subsubsection{Proof of Lemma \ref{lem:globalpcase1}}
\label{sec:a031}
In the proof,  we still use the difference operator $Wu=\pst u-\ppm u=\sum_{l=0}^k \alpha_{j,l}P_{j,l}(x)=\sum_{l=0}^k \alpha_{j,l}P_l(\xi),$ with $
\alpha_{j,l}=0, \quad l=0,\cdots,k-2,\quad j=1,\cdots,N,
$
and

\begin{equation}
\label{eqn:matform2}
M\begin{bmatrix}
\alpha_{1,k-1} \\
\alpha_{1,k} \\
\cdots \\
\alpha_{N-1,k-1} \\
\alpha_{N-1,k} \\
\alpha_{N,k-1} \\
\alpha_{N,k} \\
\end{bmatrix}
=
\begin{bmatrix}
\tau_1  \\
\iota_1 \\
\cdots\\
 \tau_{N-1} \\
\iota_{N-1}\\
 \tau_{N} \\
\iota_{N}
\end{bmatrix},
\end{equation}
where %
\[
\begin{bmatrix}
\tau_j\\
 \iota_j
\end{bmatrix}
=
\begin{bmatrix}
\ot - \ao & -\bt \\
\bo & \ot - \ao
\end{bmatrix}
\begin{bmatrix}
\eta_j 	\\
\theta_j	\\
\end{bmatrix}, \quad
\begin{bmatrix}
\eta_j 	\\
\theta_j	\\
\end{bmatrix}
=\begin{bmatrix}
u-(\ppm u)^+ \\
u_x-(\ppm u)_x^-
\end{bmatrix}_{\jp}.
\]

We will now analyze the inverse of the matrix $M.$ It is known that the inverse of a
nonsingular circulant matrix is also circulant, so is a block-circulant matrix. In particular, %
 \[
 M^{-1}=circ(r_0 , r_1, \cdots, r_{N-1}) \otimes A^{-1} \]
 where  $\otimes$ means Kronecker product for block matrices and $r_j$ is a $2 \times 2$ matrix defined as,
\begin{equation}
\label{eqn:rj}
\begin{aligned}
r_j=&Q^j (I_2-Q^N)^{-1}, \quad j=0,\cdots,N-1\\
    =& TD^j(I_2-D^N)^{-1}T^{-1},
\end{aligned}
\end{equation}
\[
D^j(I_2-D^N)^{-1}=
\begin{bmatrix}
\lambdao & 0 \\
0 & \lambdat
\end{bmatrix}
:=
\begin{bmatrix}
d_1^j & 0 \\
0 & d_2^j
\end{bmatrix}, \quad \textrm{and} \quad \dtj = -d_1^{N-j}.
\]

For the convenience of further analysis, we separate $r_j$ in terms of $\doj$ and $\dtj$,
\begin{eqnarray}
\label{eqn:rj2}
r_j &= &\doj T
\begin{bmatrix}
1 & 0 \\
0 & 0
\end{bmatrix} T^{-1}
+ \dtj T
\begin{bmatrix}
0 & 0 \\
0 & 1
\end{bmatrix} T^{-1} \notag \\
& :=& \doj Q_1 + \dtj (I_2-Q_1), 
\end{eqnarray}
where
 \begin{eqnarray}
\label{eqn:q1}
Q_1 & =&\frac{1}{2\sqrt{\Gamma^2-\Lambda^2}}
\begin{bmatrix}
c_2 + \sqrt{\Gamma^2 - \Lambda^2} & b_1+b_2 \\
b_1 - b_2 & -c_2 + \sqrt{\Gamma^2 - \Lambda^2}
\end{bmatrix},
\end{eqnarray}
when $T$ is given by \eqref{eqn:T}, and

  \begin{eqnarray}
\label{eqn:q12}
Q_1 & =& \frac{1}{2 c_2}
\begin{bmatrix}
2c_2 & b_1+b_2 \\
b_1 - b_2 & 0
\end{bmatrix},
\end{eqnarray}
when $T$ is given by \eqref{eqn:T2}.

For Case 1, eigenvalues $\lambda_{1,2}$ are real. $\sum_{j=0}^{N-1} |d_{1,2}^j | =  \frac{1}{1-|\lambda_{1,2}|}   \frac{1-|\lambda_{1,2}|^{N}}{|1-\lambda_{1,2}^N|}$. Without loss of generality, we assume $|\lo| <1 < |\lt| $, which is equivalent to $\Gamma <0$, then
\begin{eqnarray}
\sum_{j=0}^{N-1} |d_1^j | & \leq & \frac{1}{1-|\lo|} = \frac{|\lt|}{|\lt| -1}, \\
\sum_{j=0}^{N-1} |d_2^j | & \leq & \frac{1}{|\lt|-1}. 
\end{eqnarray}

We let
\begin{eqnarray}
\label{eqn:thetav}
 \begin{bmatrix}
\Xi_j \\
\Theta_j
\end{bmatrix}&:=&
A^{-1}\begin{bmatrix}
\tau_j \\
\iota_j
\end{bmatrix}
= \eta_j V_1 + \theta_j V_2, \qquad j= 1,\cdots, N
\end{eqnarray}
where
\begin{eqnarray}
\label{eqn:v12}
&V_1=
\frac{1}{\Lambda}
\begin{bmatrix}
 -\bo+\frac{k(k+1)}{h}((\ot-\ao)^2+\bo\bt)  \\
 \bo-\frac{k(k-1)}{h}((\ot-\ao)^2+\bo\bt)
\end{bmatrix}, &
V_2=\frac{1}{\Lambda}
\begin{bmatrix}
\ao^2+\bo\bt-\frac{1}{4} \\
-(\ao^2+\bo\bt-\frac{1}{4})
\end{bmatrix}
=
\begin{bmatrix}
-\frac{h}{2k} \\
\frac{h}{2k}
\end{bmatrix},\\
\label{eqn:v12est}
&  \|V_1\|_\infty \leq C \left( 1+ \frac{\max (|\bo|, |\ot-\ao|/h)}{|\la|} \right ), &  \|V_2\|_\infty \leq Ch
\end{eqnarray}
from basic algebraic calculations. Therefore,
\begin{eqnarray}
\label{eqn:alpha1}
\begin{bmatrix}
\alpha_{m,k-1} \\
\alpha_{m,k}
\end{bmatrix}
 &=&\sum_{j=0}^{N-1} r_{j} 
\begin{bmatrix}
\Xi_{j+m} \\
\Theta_{j+m}
\end{bmatrix} ,%
\quad m= 1,\cdots, N,  
\end{eqnarray}
where by periodicity, when $j+m>N$, $\Xi_{j+m} = \Xi_{j+m-N}, \Theta_{j+m} = \Theta_{j+m-N}$.

\bigskip

In summary, we obtain the estimation when $|\lo| <1<|\lt|,$
\begin{eqnarray}
\label{eqn:abd}
\left \|\begin{bmatrix}
\alpha_{m,k-1} \\
\alpha_{m,k}
\end{bmatrix} \right\|_\infty
&\le& \sum_{j=0}^{N-1}  ( | \doj | + | \dtj | ) \left ( \max_j \left | \eta_j  \right | \| Q_1V_1\|_\infty + \max_j |\theta_j| \| Q_1V_2\|_\infty \right )  \notag \\
& + & \sum_{j=0}^{N-1}  | \dtj | \left ( \max_j |\eta_j| \| V_1\|_\infty + \max_j |\theta_j| \| V_2\|_\infty \right ), \notag \\
&\le& C h^{k+1}|u|_{W^{k+1, \infty}(I)} \Big ( \frac{|\lt|+1}{|\lt|-1} \left ( \|Q_1 V_1\|_\infty + h^{-1} \|Q_1 V_2\|_\infty \right ) \notag \\
&& +  \frac{1}{|\lt|-1} \left ( \|V_1\|_\infty + h^{-1} \|V_2\|_\infty \right ) \Big ), \qquad m= 1,\cdots, N.
\end{eqnarray}

Thus, the estimates for the difference between $\pst$ and $P_h^1$ are
\begin{eqnarray}
\label{eqn:estimate12}
\|\pst u -P_h^1 u\|_{L^p(I)} &\le& C h^{k+1}|u|_{W^{k+1, \infty}(I)} \Big ( \frac{|\lt|+1}{|\lt|-1} \left ( \|Q_1 V_1\|_\infty + h^{-1} \|Q_1 V_2\|_\infty \right ) \notag \\
&& +  \frac{1}{|\lt|-1} \left ( \|V_1\|_\infty + h^{-1} \|V_2\|_\infty \right ) \Big ).
\end{eqnarray}
Similar estimates can be proved when $\Gamma>0$ and $|\lo|>1>|\lt|,$
\begin{eqnarray}
\label{eqn:estimate11}
\|\pst u -P_h^1 u\|_{L^p(I)} &\le& C h^{k+1}|u|_{W^{k+1, \infty}(I)} \Big ( \frac{|\lo|+1}{|\lo|-1} \left ( \|(I_2-Q_1) V_1\|_\infty + h^{-1} \|(I_2-Q_1) V_2\|_\infty \right ) \notag \\
&& +  \frac{1}{|\lo|-1} \left ( \|V_1\|_\infty + h^{-1} \|V_2\|_\infty \right ) \Big ),
\end{eqnarray}
and \eqref{eqn:estimate1} is obtained.

\subsubsection{Proof of Lemma \ref{lem:globalpcase2}}

Since $\pst$ is well defined, we know that $\lambda_1^N =-1.$ Therefore, 
we can obtain %
\[
I_2 - Q^N  =
\mathcal{T}
\begin{bmatrix}
2 & \frac{N}{\Gamma} \\
0 & 2
\end{bmatrix}
\mathcal{T}^{-1}, \ 
(I_2-Q^N)^{-1}=
\mathcal{T}
\begin{bmatrix}
\ot &  -\frac{N}{4\Gamma} \\
0 & \ot
\end{bmatrix}
\mathcal{T}^{-1},
\]
\beq
\label{eqn:rjcase2}
r_j = Q^j(I_2-Q^N)^{-1}
=\frac{(-1)^j}{2}I_2 + (-1)^j \frac{-N+2j}{4\g} Q_2
,\eeq
where 
\beq
\label{eqn:q22}
Q_2 = \mathcal{T}
\begin{bmatrix}
0 & 1 \\
0 & 0
\end{bmatrix},
\mathcal{T}^{-1}
= 
\begin{bmatrix}
c_2 & b_1 + b_2 \\
b_1 - b_2 & -c_2
\end{bmatrix}.
\eeq

Therefore, we have for $m = 0, \cdots, N-1$:
\begin{eqnarray*}
\left \|\begin{bmatrix}
\alpha_{m,k-1} \\
\alpha_{m,k}
\end{bmatrix} \right\|_\infty
&\le&
C N \left (1 +     \frac{N}{ |\Gamma|} \|Q_2\|_\infty   \right )  \max_j\left \| \begin{bmatrix} 
\Xi_{j} \\
\Theta_{j}
\end{bmatrix}  \right\|_\infty , \\
& \le & C  h^{k} |u|_{W^{k+1,\infty}(I)} \left (1 +     \frac{h^{-1} \|Q_2\|_\infty}{|\Gamma|}   \right ) \left ( \|V_1\|_\infty + h^{-1} \|V_2\|_\infty \right ).
\end{eqnarray*}
Similar to  Lemma \ref{lem:globalpcase1}, we can estimate $\|\pst u -P_h^1 u\|_{L^p(I)} $ and \eqref{eqn:estimate2} follows.

\subsubsection{Proof of Lemma \ref{lem:globalpcase3}}
In Case 3,  $\lambda_{1,2}$ are conjugate to each other and $|\lambda_{1,2}| = 1$. Therefore, $ \delta' \geq 0,$ and 
\begin{align*}
\sum_{j=0}^{N-1} |\doj |  & =\sum_{j=0}^{N-1} \left|\lambdao\right| =  \frac{N}{|1-\lambda_1^N|}=\sum_{j=0}^{N-1} |\dtj|.
\end{align*}
Similar to \eqref{eqn:abd}, we obtain
\begin{eqnarray*}
\left \|\begin{bmatrix}
\alpha_{m,k-1} \\
\alpha_{m,k}
\end{bmatrix} \right\|_\infty
&\le& \sum_{j=0}^{N-1}  ( | \doj | + | \dtj | ) \left ( \max_j |\eta_j| \| Q_1V_1\|_\infty + \max_j |\theta_j| \| Q_1V_2\|_\infty \right )  \notag \\
& + & \sum_{j=0}^{N-1}  | \dtj | \left ( \max_j |\eta_j| \| V_1\|_\infty + \max_j |\theta_j| \| V_2\|_\infty \right ), \notag \\
& \le & C  h^{k+1} |u|_{W^{k+1, \infty}(I)}  h^{-(\delta'+1)}\left ( \|Q_1V_1\|_\infty + h^{-1} \|Q_1V_2\|_\infty +  \|V_1\|_\infty + h^{-1} \|V_2\|_\infty \right )   \notag \\
& \le & C  h^{k+1} |u|_{W^{k+1, \infty}(I)}   h^{-(\delta'+1)} \|Q_1\|_\infty  \left ( \|V_1\|_\infty + h^{-1} \|V_2\|_\infty \right )   \notag
\end{eqnarray*} 
and we reach the estimation \eqref{eqn:estimate3}.

\subsection{ Proof of Lemma \ref{lem:globalpcase2optimal} }
\label{sec:a04}
From \eqref{eqn:thetav} and \eqref{eqn:alpha1},  we have%
\begin{eqnarray}
\label{eqn:alpha}
\begin{bmatrix}
\alpha_{m,k-1} \\
\alpha_{m,k}
\end{bmatrix}
 &=&\sum_{j=0}^{N-1} r_{j} 
\begin{bmatrix}
\Xi_{j+m} \\
\Theta_{j+m}
\end{bmatrix} ,%
\quad m= 1,\cdots, N, \notag \\
 &=:& {U}_1V_1 + {U}_2V_2,  \notag
\end{eqnarray}
where $ {U}_1= \sum_{j=0}^{N-1} r_j \eta_{j+m},  {U}_2= \sum_{j=0}^{N-1} r_j \theta_{j+m} $. We first estimate $ {U}_1,$ then $ {U}_2$ can be estimated in a similar way. 
From \eqref{eqn:rjcase2}, %
\begin{eqnarray}
\label{eqn:tut1}
 {U}_1  & =  &
\ot I_2 \sum_{j=0}^{N-1} (-1)^j \eta_{j+m} + 
 \frac{Q_2}{2\Gamma}
\sum _{j=0}^{N-1} (-1)^j \frac{-N+2j}{2}\eta_{j+m}.
\end{eqnarray}

By Lemma \ref{lem:rep}, the first term in \eqref{eqn:tut1} can be estimated by
\begin{eqnarray}
\label{eqn:s21}
|\sum_{j=0}^{N-1} (-1)^j \eta_{j+m}| & = & |\sum_{j'=0}^{\frac{N-1}{2}} (\eta_{2j'+m} - \eta_{2j'+1+m}) +  \eta_{N-1+m}| \notag \\
& \leq & \frac{N-1}{2}C h^{k+2} | u|  _{W^{k+2,\infty}(I)} +  C h^{k+1} | u|  _{W^{k+1,\infty}(I)}  \notag \\
&\leq & C h^{k+1}\| u\|  _{W^{k+2,\infty}(I)},
\end{eqnarray}
because $N$ must be odd from Lemma  \ref{lem:globalp}.
The second term in \eqref{eqn:tut1} can be estimated by using \eqref{lem:rep1},
\begin{eqnarray}
\label{eqn:s22}
&& \left|\sum _{j=0}^{N-1}(-1)^j \frac{-N+2j}{2} \eta_{j+m} - \mu  h^{k+1}S_{k+1} - \mu_2  h^{k+2} S_{k+2} \right| \notag \\
 &&\le C  \sum_{j=0}^{N-1} \left |\frac{-N +2j}{2}\right|   h^{k+3}  | u|  _{W^{k+3,\infty}(I)} \le C h^{k+1}  | u|  _{W^{k+3,\infty}(I)},
\end{eqnarray}
where $S_{k+1}, S_{k+2}$ are defined as:
\begin{eqnarray*}
S_{k+\nu} & : = & \sum_{j=0}^{N-1} (-1)^j \frac{-N+2j}{2} u^{(k+\nu)}(x_{j+m+\ot}), \quad \nu = 1,2. \\
 \end{eqnarray*} 

We assume $u \in W^{k+4,1}{(I)}$. Then $u^{(k+1)} \in W^{3,1}(I)$, is periodic, and has   the following Fourier series expansion $u^{(k+1)}(x) = \sum_{n=-\infty}^{\infty} \hat f(n) e^{2\pi inx/L}, L= b-a$, where its fourier coefficient $\hat f(n)$ satisfies:
\beq
\label{eqn:fregular}
\left | \hat f(n) \right | \leq C\frac{|u|_{W^{k+4,1}{(I)}}}{1+|n|^3}.
\eeq
 Since $x_{j+\ot} = j\Delta x = j \frac{L}{N}, \ j=0,\cdots, N-1$, then $u^{(k+1)}(x_{j+\ot}) = \sum_{n=-\infty}^{\infty} \hat f(n) \omega^{jn}$ with $\omega = e^{i\frac{2\pi}{N}}.$
Then 
\[S_{k+1} = \sum_{j=0}^{N-1}   (-1)^j \frac{-N+2j}{2} \sum_{n=-\infty}^{\infty} \hat f(n) \omega^{(j+m)n}. \]

Due to \eqref{eqn:fregular}, $\sum |\hat f(n)| $ is convergent and we can switch the order of summation, which results in
\beq
\label{eqn:s2def}
S_{k+1} = \sumn \hat f(n) W(n), \quad \textrm{where} \quad W(n) = \frac{-2 \omega^{(m+1)n}}{(1+\omega^n)^2}.
\eeq

Since $N$ is odd,  $\omega ^n = e^{2\pi i \frac{n}{N}} \neq -1, \forall n.$ Hence, $W(n)$ and $S_{k+1}$ are well defined. 
Because $W(n)$ is $N$-periodic, it's helpful to split $S_{k+1}$ into blocks of size $N$ as
$$
S_{k+1} = \sum_{l=-\infty}^{\infty} S_{k+1}^l,   \quad \textrm{where} \quad  S_{k+1}^l=\sum_{n=lN-\frac{N-1}{2}}^{lN+\frac{N-1}{2}}\hat f(n) W(n).
$$

Let's estimate $S_{k+1}^0$ first. For $|n| \leq [\frac{3N}{8}]$,   $|W(n)| = \frac{2}{|1+\omega^n|^2}  \le \frac{2}{|1+e^{i3\pi/4}|^2}=\frac{2}{2-\sqrt{2}}$. For  other $n$, 
 $|W(n)| \le |W(\frac{N-1}{2})|= \frac{2}{|1+\omega^{(N-1)/2}|^2}  \le CN^2$ from Taylor expansions.
\begin{eqnarray*}
|S_{k+1}^0| &\le & \sum_{n=-[\frac{3N}{8}]}^{[\frac{3N}{8}]} \left |  \hat f(n) W(n)  \right | +\sum^{n= -[\frac{3N}{8} ]-1}_{-\frac{N-1}{2}} \left |  \hat f(n) W(n)  \right | +\sum_ {n= [\frac{3N}{8} ]+1}^{\frac{N-1}{2}} \left |  \hat f(n) W(n)  \right |\\
& \leq & \frac{2}{2-\sqrt{2}}  \sum_{n=-[\frac{3N}{8}]}^{[\frac{3N}{8}]} \left |  \hat f(n)   \right | +CN^2 \sum^{n= -[\frac{3N}{8} ]-1}_{-\frac{N-1}{2}} \left |  \hat f(n)    \right | +CN^2\sum_ {n= [\frac{3N}{8} ]+1}^{\frac{N-1}{2}} \left |  \hat f(n)   \right |  \\
& \leq & \frac{2}{2-\sqrt{2}}  \sum_{n=-[\frac{3N}{8}]}^{[\frac{3N}{8}]} \left |  \hat f(n)   \right | +CN^2  \frac{1}{1+(\frac{3N}{8})^3}  (\frac{N}{4}+2) |u|_{W^{k+4,1}{(I)}}\\
& \leq & C \left (\sum_{n=-\frac{N-1}{2}}^{\frac{N-1}{2}}\frac{1}{1+|n|^3} +   \frac{1}{ (\frac{3}{8})^3} \right ) |u|_{W^{k+4,1}{(I)}}.
\end{eqnarray*}
Then, in a similar way,
\begin{eqnarray*}
|S_{k+1}^l|   \le C \left (\sum_{n=lN-\frac{N-1}{2}}^{lN+\frac{N-1}{2}}\frac{1}{1+|n|^3} +   \frac{1}{ (|l|+\frac{3 }{8})^3} \right ) |u|_{W^{k+4,1}{(I)}}.
\end{eqnarray*}
Therefore,
\begin{eqnarray}
\label{eqn:skp1}
|S_{k+1}|   \le C \left (\sum_{n=-\infty}^{\infty}\frac{1}{1+|n|^3} +  \sum_{l=-\infty}^{\infty} \frac{1}{ (|l|+\frac{3 }{8})^3} \right ) |u|_{W^{k+4,1}{(I)}} \le C |u|_{W^{k+4,1}{(I)}}.
\end{eqnarray}
By similar Fourier expansion technique, we can show
\begin{eqnarray}
\label{eqn:skp2}
|S_{k+2}| \leq C N  |u|_{W^{k+4,1}{(I)}}= C h^{-1}  |u|_{W^{k+4,1}{(I)}}.
\end{eqnarray}

Combine \eqref{eqn:skp1}, \eqref{eqn:skp2} with \eqref{eqn:tut1}, \eqref{eqn:s21} and \eqref{eqn:s22}, we get
\beq
\norm{  {U}_1 }_\infty  \leq Ch^{k+1} \|u\|_{W^{k+4,\infty}{(I)}} \left ( 1 + \frac{\norm{Q_2}_\infty}{|\Gamma|} \right ).
\eeq
Similarly, by \eqref{eqn:thetaest} and the Fourier expansion technique
 \beq
\norm{ {U}_2 }_\infty  \leq Ch^{k} \|u\|_{W^{k+4,\infty}{(I)}}  \left ( 1 + \frac{\norm{Q_2}_\infty}{|\Gamma|} \right ).
\eeq
Therefore, 
\begin{eqnarray}
\label{eqn:alpha2}
\left \|\begin{bmatrix}
\alpha_{m,k-1} \\
\alpha_{m,k}
\end{bmatrix} \right\|_\infty
&\le&
\norm{ {U}_1 }_\infty \norm{V_1}_\infty + \norm{ {U}_2}_\infty \norm{V_2}_\infty, \notag \\
&\le & C h^{k+1} \|u\|_{W^{k+4,\infty}(I)} \left (1+\frac{ \norm{Q_2}_\infty}{|\Gamma|} \right ) (\|V_1\|_\infty + h^{-1} \|V_2 \|_\infty), \quad m = 1,\cdots, N, \notag  
\end{eqnarray}
  and \eqref{eqn:estimate2optimal} is obtained.

\subsection{ Proof of Lemma \ref{lem:globalpcase3optimal} }
\label{sec:a05}

From the discussion in Lemma \ref{lem:globalp}, we can write $\lambda_{1,2} = e^{\pm i\theta}$ and assume $ \theta \in (0,\pi)$. First, we want to make clear of the conditions on $\delta, \delta'.$ Since $|\lo|=|\lo^N|=1,$ we have $\delta, \delta' \geq 0.$
Because
$1-\lo^N = (\omega^n)^N - (e^{i\theta})^N = (\omega^n - e^{ i\theta})(\sum_{l=0}^{N-1} (\omega^n)^{N-1-l}(e^{ i\theta})^l)$, thus $\abs{1-\lo^N} \leq N \abs{\omega^n - e^{ i\theta}}, \forall n.$ With the assumption $\abs{1-\lo^N } \sim Ch^{\delta'}$, we get $\abs{\omega^n - e^{ i\theta} } \geq Ch^{\delta'+1}.$ Particularly, when $n=0$, we have $\abs{1-\lo} \geq Ch^{\delta'+1}$, hence $\delta/2 \leq \delta'+1.$ 

 Similar to \eqref{eqn:alpha1} in Case 1, we can get
\begin{eqnarray}
\label{eqn:alpha}
\begin{bmatrix}
\alpha_{m,k-1} \\
\alpha_{m,k}
\end{bmatrix}
 &=&\sum_{j=0}^{N-1} r_{j} 
\begin{bmatrix}
\Xi_{j+m} \\
\Theta_{j+m}
\end{bmatrix} ,%
\quad m= 1,\cdots, N, \notag \\
 &=:& \mathcal{U}_1V_1 +\mathcal{U}_2V_2,  \notag
\end{eqnarray}
where
\begin{eqnarray*}
\mathcal{U}_1 =Q_1 \sum_{j=0}^{N-1} \eta_{j+m} \doj  +(I_2-Q_1) \sum_{j=0}^{N-1} \eta_{j+m} \dtj,	&	\doj = \frac{e^{ij\theta}}{1-e^{iN\theta}}, \\
\mathcal{U}_2 =Q_1 \sum_{j=0}^{N-1} \theta_{j+m} \doj  +(I_2-Q_1) \sum_{j=0}^{N-1} \theta_{j+m} \dtj,	&	\dtj =  -d_1^{N-j}=\frac{-e^{i(N-j)\theta}}{1-e^{iN\theta}}.
\end{eqnarray*}
We introduce:
\begin{eqnarray*}
\s_1 &= &  \frac{1}{1-e^{iN\theta}} \sum_{j=0}^{N-1} e^{ij\theta} u^{(k+1)}(x_{j+m+\ot}) ,\\
\s_2 &= & \frac{-1}{1-e^{iN\theta}} \sum_{j=0}^{N-1} e^{i(N-j)\theta} u^{(k+1)}( x_{j+m-\ot}).
\end{eqnarray*}

Then by Lemma \ref{lem:rep}:
\begin{align*}
&\abs{\mathcal{U}_1-\mu h^{k+1} Q_1 \s_1-\mu h^{k+1} (I_2-Q_1) \s_2 } \leq Ch^{k+1} (1+\|Q_1\|_\infty) |u|_{W^{k+2, \infty}(I)},\\
&\abs{\mathcal{U}_2-\rho h^{k} Q_1 \s_1-\rho h^{k} (I_2-Q_1) \s_2 } \leq Ch^{k} (1+\|Q_1\|_\infty) |u|_{W^{k+2, \infty}(I)}.
\end{align*}

Therefore,
\beq
\label{eqn:mathcalu}
\abs{\mathcal{U}_\nu } \leq Ch^{k+2-\nu}\left ( (1+ \|{Q_1}\|_\infty)|u|_{W^{k+2, \infty}} + \|{Q_1}\|_\infty (1+ \max (\abs{\s_1},\abs{\s_2}) ) \right ), \quad \nu = 1,2.
\eeq

By using similar Fourier expansion: $u^{(k+1)}(x_{j+\ot}) = \sumn \hat f(n)\omega^{jn} $. Since now we assume $u \in W^{k+3,\infty}(I)$,  $\abs{\hat f(n)} \leq C\frac{1}{1+|n|^2}|u|_{W^{k+3,1}(I)}.$
\begin{eqnarray*}
\s_1  & = & \frac{1}{1-e^{iN\theta}} \sumn \hat f(n) \sum_{j=0}^{N-1} e^{ij\theta} \omega^{(j+m)n} = \sumn \hat f(n) \w_1(n),\\
\s_2 & = & \frac{-1}{1-e^{iN\theta}} \sumn \hat f(n) \sum_{j=0}^{N-1} e^{i(N-j)\theta}\omega^{(j+m-1)n} =  \sumn \hat f(n) \w_2(n),
\end{eqnarray*}
where from simple algebra
\[
\w_1(n) = \frac{\omega^{mn}}{1-e^{i\theta} \omega ^ n}, \quad \w_2(n) = \frac{\omega^{(m-1)n}}{1-e^{-i\theta} \omega ^ n}.
\]
From the discussion at the beginning of the proof, we have $\abs{\w_2(n)}=|\lo-\omega^n|^{-1}\leq Ch^{-(\delta'+1)},$ and similarly $\abs{\w_1(n)}\leq Ch^{-(\delta'+1)}.$
Since $\s_1$ and $\s_2$ can be estimated in the same way, we only show  details for $\s_2$ in what follows.
Similar to the proof of Lemma \ref{lem:globalpcase2optimal}, we split $\s_2$ into blocks of size $N$,
$$
\s_2 = \sum_{l=-\infty}^{\infty} \s_{2}^l,   \quad \textrm{where} \quad \s_{2}^l=\sum_{n=lN}^{(l+1)N-1}\hat f(n) \w_2(n).
$$

With the assumption that $0 \leq \delta/2 \leq 1$, there $\exists \,n_0 \sim O(h^{\delta/2 -1})$ s.t. $2\pi \frac{n_0}{N} \le \theta < 2\pi \frac{n_0+1}{N}$. Let $n_1 = \floor{n_0/2}, n_2 = 2n_0 - n_1$, %
then for $n_1 \leq n \leq n_2$,  $\abs{\hat f(n)} \leq C\frac{1}{1+ n_1^2}|u|_{W^{k+3,1}(I)}.$
For other $n$, $\abs{\w_2(n)} \leq \abs{\w_2(n_1)} \leq  \frac{1}{2\abs{\sin(\pi n_1/N-\theta/2)}} \leq Ch^{-\delta/2}$. Thus,
\begin{align*}
\abs{\s_2^0} & \leq Ch^{-\delta/2} \left (\sum_{n=0}^{n_1-1} + \sum_{n= n_2+1}^{N-1} \abs{\hat f(n)} \right ) + Ch^{-(\delta'+1)}\sum_{n=n_1}^{n_2} \abs{\hat f(n)} \\
	& \leq  C \left ( h^{-\delta/2}  \sum_{n= 0}^{N-1} \frac{1}{1+|n|^2} + h^{-(\delta'+1)}(n_2-n_1+1)\frac{1}{1+n_1^2} \right ) |u|_{W^{k+3,1}(I)} \\
	& \leq  C \left ( h^{-\delta/2}  \sum_{n= 0}^{N-1} \frac{1}{1+|n|^2} + h^{-(\delta'+1)}h^{\delta/2-1} h^{2-\delta} \right ) |u|_{W^{k+3,1}(I)} \\
	& \leq C \left (h^{-\delta/2} \sum_{n= 0}^{N-1} \frac{1}{1+|n|^2} +h^{-\delta'-\delta/2} \right ) |u|_{W^{k+3,1}(I)}.
\end{align*}
Using similar approaches, for $l\neq 0$,
$$
\abs{\s_2^l} \leq C \left (h^{-\delta/2}  \sum_{n= lN}^{(l+1)N-1} \frac{1}{1+|n|^2}+ h^{-\delta'+\delta/2} \frac{1}{|n_1/N+l|^2}  \right ) |u|_{W^{k+3,1}(I)}.
$$
Summing up, we reach the estimation
\begin{align}
\abs{\s_2} & \leq C \left (h^{-\delta/2}  \sum_{n=-\infty}^{\infty} \frac{1}{1+|n|^2}+ h^{-\delta'-\delta/2} + h^{-\delta'+\delta/2} \sum_{l\in \mathbb{N}, l\neq 0} \frac{1}{|l|^2}   \right ) |u|_{W^{k+3,1}(I)} \notag \\
& \leq C h^{-\delta'-\delta/2} |u|_{W^{k+3,1}(I)}.
\label{eqn:s2singular}
\end{align}
Similarly, we obtain
\beq
\label{eqn:s1singular}
\abs{\s_1} \leq   C h^{-\delta'-\delta/2}  |u|_{W^{k+3,1}(I)}.
\eeq	

Combine \eqref{eqn:s2singular}, \eqref{eqn:s1singular} and \eqref{eqn:mathcalu}, we get
$$
|\mathcal{U}_\nu| \leq 
Ch^{k+2-\nu}(1+h^{-(\delta'+\delta/2)}\|Q_1\|_\infty) \|u\|_{W^{k+3,\infty}(I)}, \quad \nu = 1,2,
$$
and   \eqref{eqn:estimate3optimal} follows.

\subsection{ Detailed discussions on the choice of the   $T$ matrix as in   \eqref{eqn:T} or \eqref{eqn:T2} }
\label{sec:a1}
We discuss what parameters result in $| b_1 \pm b_2|  = 0,$ under the assumption that $\ao$ has no dependence on $h$, $\bo=\tilde{\bo} h^{A_1}, \bt=\tilde{\bt} h^{A_2},\tilde{\bo}, \tilde{\bt}$ are nonzero constants that do not depend on $h.$
\begin{eqnarray*}
b_1 - b_2 & = & (-\bo + \frac{k(k-1)}{2h})(1-\bt\frac{2k(k-1)}{h}) + \frac{k(k-1)}{h} 2\ao^2 \\
& = & (-\tilde \bo h^{A_1}+ \frac{k(k-1)}{2}{h^{-1}})(1-{2k(k-1)}\tilde \bt h^{A_2-1}) + {k(k-1)} 2\ao^2h^{-1},
\end{eqnarray*}
\begin{eqnarray*}
b_1 + b_2 & = & (-\bo + \frac{k(k+1)}{2h})(1-\bt\frac{2k(k+1)}{h}) + \frac{k(k+1)}{h} 2\ao^2 \\
& = & (-\tilde \bo h^{A_1}+ \frac{k(k+1)}{2}{h^{-1}})(1-{2k(k+1)}\tilde \bt h^{A_2-1}) + {k(k+1)} 2\ao^2h^{-1}.
\end{eqnarray*}

If $b_1 - b_2 = 0, \forall h < h_0$, then
\begin{itemize}
\item  $\ao \neq 0$, then $A_1 = -1, \ A_2 = 1$ and $\tilde \bo, \tilde \bt$ satisfies
\beq
\label{eqn:b1mb2}
(-\tilde \bo + \frac{k(k-1)}{2})(1-{2k(k-1)}\tilde \bt ) + {k(k-1)}2\ao^2 = 0.
\eeq
\end{itemize}

Similarly, for $b_1 + b_2 =0, \forall h < h_0$, then
 \begin{itemize}
\item  $\ao \neq 0$, $A_1 = -1, \ A_2=1$ and $\tilde \bo, \tilde \bt$ satisfies
\beq
\label{eqn:b1pb2}
(-\tilde \bo + \frac{k(k+1)}{2})(1-{2k(k+1)}\tilde \bt ) + {k(k+1)} 2\ao^2 = 0.
\eeq
\end{itemize}

\subsection{ Detailed discussions on Case 2 }
\label{sec:a2}

Parameter choices for $| \Gamma|  = | \Lambda | $ imply
\begin{eqnarray*}
\Gamma \pm \Lambda & = & \bo + \frac{k^2(k^2-1)}{h^2}\bt + \frac{k(k\pm1)}{h}(-2\ao^2 -2\bo\bt) + \frac{-k^2 \pm k}{2h} \\
& = & (\bo-\frac{k(k\mp1)}{2h})(1-2\bt \frac{k(k\pm1)}{h})-\frac{k(k \pm 1)}{h}2\ao^2=0,
\end{eqnarray*}
which indicates 
\begin{itemize}
\item if $\ao \neq 0$, then $b_1 \pm b_2$ can be greatly simplified as follows.
\begin{itemize}
	\item If $\Gamma + \Lambda = 0,$ then $k$ is odd from Lemma \ref{lem:globalp}, and
	\begin{align*}
	& b_1+b_2 = \frac{k}{h}  \left (1- \bt \frac{2k(k+1)}{h} \right), 	\\
	& b_1-b_2 = -\frac{2}{k+1}  \left( \bo - \frac{k(k-1)}{2h} \right),	\\
	& \la = -\frac{1}{k+1} \left( \bo -\frac{k^2}{h} + \frac{k^2(k^2-1)}{h^2}\bt \right ).
	\end{align*}
	\item If $\Gamma -\Lambda = 0,$ then $k$ is even from Lemma \ref{lem:globalp}, and
	\begin{align*}
	& b_1+b_2 = \frac{2}{k-1} \left (\bo - \frac{k(k+1)}{2h} \right), 	\\
	& b_1-b_2 = -\frac{k}{h}  \left(1- \bt \frac{2k(k-1)}{h} \right),		\\
	& \la = -\frac{1}{k-1} \left( \bo -\frac{k^2}{h} + \frac{k^2(k^2-1)}{h^2}\bt \right ), \quad k>1.\\
	\end{align*}
\end{itemize}

\item If $\ao = 0,$ then 
\beq
\label{eqn:case22}
\bo = \frac{k(k\pm 1)}{2h}, \mathrm{or} \ \bt = \frac{h}{2k(k \pm 1)}.
\eeq
\end{itemize}

\bibliographystyle{abbrv}
\bibliography{ac@msu} 

\end{document}